\theoremstyle{plain}
    \newtheorem{thm}{Theorem}[section]
    \newtheorem{claim}[thm]{Claim}
     \newtheorem{conjecture}[thm]{Conjecture}
    \newtheorem{corollary}[thm]{Corollary}
    \newtheorem{lemma}[thm]{Lemma}
    \newtheorem{proposition}[thm]{Proposition}
    \newtheorem{question}[thm]{Question}
    \newtheorem{theorem}[thm]{Theorem}
\theoremstyle{definition}
    \newtheorem{notation}[thm]{Notation}
    \newtheorem*{notation*}{Notation and Terminology}
    \newtheorem{remark}[thm]{Remark}
    \newtheorem*{ack}{Acknowledgments}
\theoremstyle{remark}
\newcommand{\mstriangle}[1]{
\begin{tikzpicture}[x=0.3cm,y=0.3cm]
\draw (-0.4,-0.433) -- (1.4,-0.433);
\draw (-0.2,-0.7794) -- (0.7,0.7794);
\draw (1.2,-0.7794) -- (0.3,0.7794);
\end{tikzpicture}
}
\newcommand{\mssharp}[1]{
\begin{tikzpicture}[x=0.3cm,y=0.3cm]
\draw (-0.8,-0.5) -- (0.8,-0.5);
\draw (-0.8,0.5) -- (0.8,0.5);
\draw (-0.5,-0.8) -- (-0.5,0.8);
\draw (0.5,-0.8) -- (0.5,0.8);
\end{tikzpicture}
}
\newcommand{\Rmnum}[1]{\expandafter\@slowromancap\romannumeral #1@}
\begin{document}

\title[Strictly nef]
{Strictly nef divisors on singular threefolds}

\author{Guolei Zhong}


{

\address
{
\textsc{National University of Singapore,
Singapore 119076, Republic of Singapore
}}
\email{zhongguolei@u.nus.edu}
\begin{abstract}
Let $X$ be a normal projective threefold with mild singularities, and $L_X$ a strictly nef $\mathbb{Q}$-divisor on $X$.
First, we show the ampleness of $K_X+tL_X$ with sufficiently large $t$ if either the Kodaira dimension $\kappa(X)\neq 0$ or the augmented irregularity $q^{\circ}(X)\neq 0$.
Second, we show that, if $(X,\Delta)$ is a projective klt threefold pair with  the anti-log canonical divisor $-(K_X+\Delta)$ being strictly nef, then $X$ is rationally connected.
\end{abstract}

\subjclass[2010]{
14E30,   
14J30
}

\keywords{threefolds, rational connectedness, strictly nef, minimal model program}

\maketitle
\tableofcontents

\section{Introduction}
We work over the field $\mathbb{C}$ of complex numbers. 
A $\mathbb{Q}$-Cartier divisor $L$ on  a normal projective variety $X$ is said to be \textit{strictly nef}, if $L\cdot C>0$ for every curve $C$ on $X$. 
It has been known that strictly nef but non-ample divisors do exist.
The following famous conjecture, which is proposed by Campana and Peternell, leads people to learn how far strictly nef anti-canonical divisors are from being ample.

\begin{conjecture}\label{conj_CP}(\cite[Problem 11.4]{CP91})
Let $X$ be a smooth projective variety with the anti-canonical divisor $-K_X$ being strictly nef.
Then $X$ is Fano, i.e., $-K_X$ is ample.	
\end{conjecture}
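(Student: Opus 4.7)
The Campana--Peternell conjecture remains open in general; the sketch below proposes a plan in the threefold case that this paper studies. The strategy proceeds in three stages: establish that $X$ is rationally connected, run a $K_X$-MMP to reduce to a Mori fiber space, and then upgrade strict nefness of $-K_X$ to ampleness using the base-point free theorem.

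The first step invokes the second main theorem of this paper (with $\Delta=0$): if $-K_X$ is strictly nef then $X$ is rationally connected. This yields $\kappa(X)=-\infty$ and a covering family of rational curves, which are the source of positivity we will leverage. The second step runs a $K_X$-MMP. Strict nefness of the anti-canonical class is preserved under divisorial contractions and flips: for a divisorial contraction $f\colon X\to X'$ with exceptional divisor $E$ and positive discrepancy $a$, the strict transform $\widetilde{C}$ of any curve $C\subset X'$ satisfies $-K_{X'}\cdot C = -K_X\cdot\widetilde{C} + a E\cdot\widetilde{C} > 0$, and the flip case is analogous. The MMP terminates in a Mori fiber space $\pi\colon X'\to Y$ on which $-K_{X'}$ remains strictly nef. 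If $Y$ is a point then $\rho(X')=1$, and a non-zero nef class in Picard number one is automatically a positive multiple of an ample class, so $-K_{X'}$ is ample; if $\dim Y\geq 1$, the general fiber of $\pi$ carries a strictly nef anti-canonical divisor in dimension at most two, and one invokes the lower-dimensional Campana--Peternell (known for curves and surfaces) on the fiber, combining with positivity pulled from the base to deduce ampleness of $-K_{X'}$.

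Once $-K_{X'}$ is ample on the Mori fiber space model, bigness of $-K_X$ follows by a direct intersection calculation through each MMP step, and the base-point free theorem then shows $-K_X$ is semi-ample. The morphism defined by $|-mK_X|$ cannot contract any curve without violating strict nefness, so it must be finite birational, hence an isomorphism, giving ampleness of $-K_X$.

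The hardest part is the leap from strict nefness to bigness of $-K_X$. Strict nefness is a one-dimensional positivity condition that does not directly yield $(-K_X)^3>0$. This is exactly why the first main theorem of the paper must impose either $\kappa(X)\neq 0$ or $q^{\circ}(X)\neq 0$ as an auxiliary hypothesis, and why the unconditional Campana--Peternell conjecture remains open. A plausible route to bigness would combine the covering family of rational curves from rational connectedness with a Bogomolov--Miyaoka--Yau-type inequality on a smooth model, but carrying this out without extra numerical hypotheses is the central obstacle.
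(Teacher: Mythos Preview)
The paper does not prove this statement: it is Conjecture~1.1, quoted from \cite{CP91}, and the paper immediately notes that the case $\dim X\le 3$ is already settled (\cite{Mae93}, \cite{Ser95} for smooth $X$; \cite{Ueh00} for canonical threefolds). The paper then \emph{uses} the threefold case as a black box (Theorem~\ref{thm_canonical_k}) throughout, e.g.\ in Propositions~\ref{contr_curve_elementary} and~\ref{prop_Exc_surface_point}. So there is no ``paper's own proof'' to compare against, and your sketch should be measured against the known arguments of Serrano and Uehara, not against anything in this paper.

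On the substance of your sketch, two steps are genuine gaps rather than details to be filled in. First, the Mori fibre space case with $\dim Y\ge 1$: knowing that the general fibre $F$ satisfies $-K_F$ ample gives no control over $-K_{X'}$ as a divisor on $X'$. ``Combining with positivity pulled from the base'' is exactly the missing content; there is no base positivity to pull, since $Y$ is an arbitrary curve or surface, and relative ampleness plus strict nefness does not yield global bigness. Second, even granting ampleness of $-K_{X'}$ on the end model, your claim that bigness of $-K_X$ ``follows by a direct intersection calculation through each MMP step'' is not justified: for a divisorial contraction $f\colon X\to X'$ one has $-K_X=f^*(-K_{X'})-aE$ with $a>0$, i.e.\ (big and nef) minus (effective), and such a difference need not be big. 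The actual proofs (Serrano via a careful analysis of extremal rays and the nef cone on threefolds; Uehara via the abundance-type arguments for canonical threefolds) confront precisely this bigness problem head-on and do not route through rational connectedness. Finally, invoking Theorem~\ref{intro_main_prop} for rational connectedness is logically permissible here but anachronistic and unnecessary: the known threefold proofs predate it by decades, and in any case rational connectedness alone does not help with the bigness obstacle you yourself identify.
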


In the 1990s, Conjecture \ref{conj_CP} was intensively studied in lower dimensions and
it has been confirmed in dimension $\le 3$ (cf.~\cite{Mae93} and \cite{Ser95}).
Subsequently, Conjecture \ref{conj_CP} was further extended to the singular case and a positive answer  for threefolds with only canonical singularities  has been obtained; see \cite[Theorem 3.9]{Ueh00}.

Recently, studying the positivity of tangent bundles, Li, Yang and Ou showed that a smooth projective variety with the  anti-canonical divisor being strictly nef is rationally connected (cf.~\cite[Theorem 1.2]{LOY19}). 
Since a projective klt pair $(X,\Delta)$ with $-(K_X+\Delta)$  being ample is rationally connected (cf.~\cite{Zha06}), 
this nice result \cite[Theorem 1.2]{LOY19} provides evidence for  Conjecture \ref{conj_CP}  in all dimensions. 
It has also been a long history to seek the numerical criterion for a variety to be rationally connected (cf.~\cite{Zha06}).  
Motivated by Conjecture \ref{conj_CP} and  \cite[Theorem 1.2]{LOY19}, we ask the following question.

\begin{question}\label{gene_conj}
Let $(X,\Delta)$ be a projective klt pair with the anti-log canonical divisor $-(K_X+\Delta)$ being strictly nef.
Will $X$ be rationally connected?
\end{question}

Question \ref{gene_conj} has a positive answer (in all dimensions) when $\Delta=0$ and $X$ is smooth (cf.~\cite[Theorem 1.2]{LOY19}).
It extends Conjecture \ref{conj_CP} to a (weaker) version for  normal projective varieties  
and also relates the positivity on $X$ to its underlying geometric properties.
This is the initial point of the paper.

In \cite{Ser95}, to study Conjecture \ref{conj_CP}, Serrano also posed the following generalized conjecture  and verified it for Abelian varieties, Gorenstein surfaces and  smooth threefolds with some possible exceptions (cf.~\cite[Proposition 1.4 and Theorems 2.3, 4.4]{Ser95}).  
\begin{conjecture}\label{conj_serreno}(cf.~\cite[Question 0.1]{Ser95})
Let $X$ be a smooth projective variety, and $L_X$ a strictly nef line bundle on $X$.
Then $L_X+\varepsilon K_X$ is ample for sufficiently small rational numbers $\varepsilon>0$.
\end{conjecture}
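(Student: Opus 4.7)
The plan is to tackle Serrano's conjecture in the threefold setting (the paper's ambient dimension) via MMP and case analysis. First, I reformulate the conclusion: $L_X + \varepsilon K_X$ being ample for small $\varepsilon > 0$ is equivalent to $K_X + tL_X$ being ample for all sufficiently large $t \in \mathbb{Q}_{>0}$. I then aim to establish nefness and then ampleness of $K_X + tL_X$. For nefness, I would invoke the cone theorem: every extremal ray of $\NE(X)$ is generated by a rational curve $C$ with $-K_X \cdot C \leq 2\dim X$, while strict nefness forces $L_X \cdot C \geq \delta > 0$ for a uniform $\delta$ depending on the denominators of $L_X$. Thus $(K_X + tL_X) \cdot C > 0$ on every extremal ray once $t$ is sufficiently large, hence on all of $\NE(X)$.

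For ampleness, I would apply the Nakai--Moishezon criterion: the substantive content is strict positivity of $(K_X + tL_X)^{\dim Y} \cdot Y$ for every irreducible subvariety $Y \subset X$; in dimension three only surfaces $Y$ demand real attention. Here I would split by Kodaira dimension. If $\kappa(X) \geq 1$, an Iitaka fibration reduces the analysis to a lower-dimensional base via suitable pullback/pushforward of $L_X$. If $\kappa(X) = -\infty$, then $X$ is uniruled; running an MMP produces a Mori fiber space $X' \to Z$, and the behavior of $L_{X'}$ along the fibers can be analyzed using Serrano-type statements on the (lower-dimensional) base. If $\kappa(X) = 0$, one passes to a minimal model where $K_{X'} \equiv 0$ and then uses the Albanese morphism (available when $q^{\circ}(X) \neq 0$) to map onto a nontrivial abelian variety, on which Serrano's conjecture is already known (\emph{cf.}~\cite[Proposition 1.4]{Ser95}), and transfers ampleness back.

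The main obstacle is the case where no nontrivial reduction is available, namely when $\kappa(X)$ is extremal (either $0$ or $-\infty$) and simultaneously $q^{\circ}(X) = 0$ --- essentially the rationally connected or strict Calabi--Yau situation. This is exactly the dichotomy the paper appears to exploit: by splitting along $\kappa(X) \neq 0$ versus $q^{\circ}(X) \neq 0$ it guarantees that \emph{some} nontrivial reduction exists, and the remaining rationally connected regime is addressed by the separate statement on $-(K_X+\Delta)$ strictly nef. A secondary but genuine technical difficulty is that strict nefness of $L_X$ is not \emph{a priori} preserved under divisorial contractions or flips in the MMP, so one must verify that the MMP steps respect the hypothesis; this requires careful use of $\mathbb{Q}$-factoriality and of the klt/canonical singularity assumptions to ensure that the transform of $L_X$ remains strictly nef on each intermediate model.
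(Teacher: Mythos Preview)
The statement you are attempting to prove is labelled a \emph{Conjecture} in the paper, not a theorem; the paper does not prove it and does not claim to. Serrano's conjecture remains open even for smooth threefolds: as the paper recalls, Campana--Chen--Peternell \cite{CCP08} confirmed it in dimension three only up to the possible exception of a Calabi--Yau threefold with $L_X \cdot c_2(X) = 0$. The paper's own contribution toward the conjecture (Theorem~\ref{main_Goren_ter_3fold}) is a \emph{partial} result for singular threefolds under the extra hypothesis $\kappa(X) \neq 0$ or $q^\circ(X) \neq 0$ --- precisely the assumptions that sidestep the open Calabi--Yau case.

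Your outline correctly identifies this dichotomy and even names the obstruction, but then tries to dispatch the residual case by appealing to ``the separate statement on $-(K_X+\Delta)$ strictly nef'' (Theorem~\ref{intro_main_prop}). This is a genuine gap: that theorem treats the very special situation $L_X = -(K_X+\Delta)$, not an arbitrary strictly nef $L_X$, and its conclusion (rational connectedness of $X$) says nothing about ampleness of $K_X + tL_X$ for a general $L_X$. When $\kappa(X) = 0$ and $q^\circ(X) = 0$ --- for instance $X$ a simply connected Calabi--Yau threefold with $L_X \cdot c_2(X) = 0$ --- neither the Iitaka/MMP reductions, the Albanese trick, nor Theorem~\ref{intro_main_prop} applies, and your sketch offers no argument there. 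That is exactly the case that remains open; see also Remark~\ref{rem_kappa=0}.
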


One decade later,  Campana, Chen and Peternell  confirmed Conjecture \ref{conj_serreno} in dimension three with the only (possible) exception that $X$ is Calabi-Yau and $L_X\cdot c_2(X)=0$ (cf.~\cite[Theorem 0.4]{CCP08}). 
Ample divisors being strictly nef,  Conjecture \ref{conj_serreno} can also be regarded as a weak analogue of Fujita's Conjectures. 
In the spirit of Conjecture \ref{conj_serreno}, we consider the (pair case of) singular varieties; see \cite[Corollary D]{LP20A} for a partial answer when $\dim X=3$.
\begin{question}\label{ques_serrano}
Let $(X,\Delta)$ be a projective klt pair, and  
 $L_X$  a strictly nef $\mathbb{Q}$-divisor on $X$.
Will $K_X+\Delta+tL_X$ be ample for sufficiently large $t\gg 1$?
\end{question}

Note  that, a positive answer to Question \ref{ques_serrano} will give a solution to Question \ref{gene_conj}; see \cite{Zha06}. 
In this paper, we shall study Questions \ref{gene_conj}
and  \ref{ques_serrano} for the case $\dim X\le 3$, with the main results Theorem  \ref{intro_main_prop} and Theorem 
\ref{main_Goren_ter_3fold} (or Theorem \ref{main_theorem_Goren_ter_3fold}, a more general form).

As a warm up, the following proposition 
answers  Questions \ref{gene_conj} and \ref{ques_serrano} affirmatively for the surface case. 
We shall give an alternative proof of Proposition \ref{main_thm_surface} in Section \ref{section2}.
\begin{proposition}(\cite[Corollary 1.8]{HL20}; see Proposition \ref{prop_Q-Goren_surface} for a further extension)\label{main_thm_surface}
Questions \ref{ques_serrano}  and \ref{gene_conj} have  positive answers when $\dim X=2$.
\end{proposition}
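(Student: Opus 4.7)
The plan is to first reduce Question \ref{gene_conj} to Question \ref{ques_serrano}, and then prove Question \ref{ques_serrano} in dimension two via the Nakai-Moishezon criterion together with the Hodge index theorem.

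A positive answer to Question \ref{ques_serrano} implies Question \ref{gene_conj}: if $-(K_X+\Delta)$ is strictly nef, applying Question \ref{ques_serrano} with $L_X:=-(K_X+\Delta)$ yields that $K_X+\Delta+tL_X=(1-t)(K_X+\Delta)$ is ample for some $t \gg 1$, whence $-(K_X+\Delta)$ is ample and $X$ is rationally connected by \cite{Zha06}. I therefore focus on Question \ref{ques_serrano} in dimension two. For this, by Nakai-Moishezon it suffices to show, for $t \gg 1$, (i) $(K_X+\Delta+tL_X)\cdot C > 0$ for every irreducible curve $C \subset X$, and (ii) $(K_X+\Delta+tL_X)^2 > 0$. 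Condition (i) follows from the Cone Theorem for the klt surface pair $(X,\Delta)$: $\NE(X)$ decomposes into its $(K_X+\Delta)$-nonnegative part plus finitely many extremal rays $R_i=\mathbb{R}_{\geq 0}[C_i]$ of bounded length $0<-(K_X+\Delta)\cdot C_i \leq 3$, on each of which $L_X\cdot C_i>0$; so a suitably large $t$ makes $(K_X+\Delta+tL_X)\cdot R_i>0$, and positivity on the $(K_X+\Delta)$-nonnegative part is automatic from $L_X$ being strictly nef. For (ii), the quadratic expansion $(K_X+\Delta+tL_X)^2=(K_X+\Delta)^2+2t(K_X+\Delta)\cdot L_X+t^2 L_X^2$ reduces the problem, in the delicate case $L_X^2=0$, to establishing $(K_X+\Delta)\cdot L_X>0$.

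Since $L_X$ is strictly nef and $\Delta\geq 0$, one has $L_X\cdot\Delta\geq 0$ with strict inequality when $\Delta\neq 0$, so it suffices to control $L_X\cdot K_X$. A Riemann-Roch plus Serre duality computation shows that $L_X\cdot K_X<0$ would force $\kappa(L_X)\geq 1$ (using that $K_X-mL_X$ fails to be pseudo-effective for $m\gg 1$, since its intersection with the nef $L_X$ is then negative); but on a surface, a nef divisor with zero self-intersection and positive Iitaka dimension is semi-ample, producing a fibration whose general fiber $F$ satisfies $L_X\cdot F=0$, contradicting strict nefness. If instead $L_X\cdot K_X=0$, the Hodge index theorem yields $K_X\equiv cL_X$ in $N^1(X)_{\mathbb{R}}$ for some $c\in\mathbb{R}$, and I would conclude by analyzing the sign of $c$: the case $c>0$ gives $K_X$ nef with $K_X^2=0$, hence an elliptic fibration with $L_X$-trivial fibers; the case $c<0$ makes $-K_X$ strictly nef, hence ample by the surface Campana-Peternell theorem \cite{Mae93, Ser95}, so $L_X$ is ample, contradicting $L_X^2=0$; and the case $c=0$ with $\Delta\neq 0$ gives $L_X\cdot(K_X+\Delta)=L_X\cdot\Delta>0$ directly, while $c=0$ with $\Delta=0$ places $X$ among the klt Calabi-Yau surfaces, which admit fibrations forcing $L_X\cdot F=0$ on general fibers, again a contradiction.

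The main obstacle is this borderline case $L_X^2=0$; its resolution requires assembling Hodge index, Riemann-Roch, semi-ampleness of nef zero-self-intersection divisors on surfaces, the classification of klt Calabi-Yau surfaces, and the surface Campana-Peternell theorem, in order to rule out all degenerate configurations of $L_X$ relative to $K_X+\Delta$.
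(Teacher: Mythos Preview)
Your overall strategy---reduce to Question~\ref{ques_serrano}, then apply Nakai--Moishezon and handle the borderline case $L_X^2=0$ via Hodge index and a sign analysis of $K_X\cdot L_X$---is in the spirit of Serrano's original argument for smooth surfaces, and is quite different from the paper's proof. The paper instead passes to the minimal resolution $\pi:\widetilde{X}\to X$, observes that $\pi^*L_X$ is almost strictly nef, and invokes \cite[Theorem~20]{Cha20} to get bigness of $K_{\widetilde{X}}+t\,\pi^*L_X$; this pushes down to bigness of $K_X+\Delta+tL_X$, and then strict nefness plus bigness gives ampleness via Lemma~\ref{lem-big-ample}. That route avoids the delicate case analysis entirely.

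Your argument, however, has two genuine gaps in the $L_X^2=0$ analysis. First, the Hodge index step is misapplied: from $L_X^2=0$ and $L_X\cdot K_X=0$ you only get $K_X^2\le 0$, with equality if and only if $K_X\equiv cL_X$. You have not excluded $K_X^2<0$, and in that subcase your trichotomy on $c$ never gets off the ground. Second, your treatment of the case $c=0$, $\Delta=0$ is incorrect as stated: klt Calabi--Yau surfaces do \emph{not} all admit fibrations (a general algebraic K3 surface has Picard number one and no nonconstant map to a curve), and even when a fibration exists there is no reason a priori that $L_X$ is trivial on fibres. What actually works here splits according to the Beauville--Bogomolov type: for K3/Enriques type one uses Riemann--Roch on the K3 cover ($\chi(\mathcal{O})=2$) to produce a nonzero effective $D\in|mL_X|$, whence $L_X\cdot D=mL_X^2=0$ contradicts strict nefness; for abelian/bielliptic type Riemann--Roch gives $\chi=0$ and one must instead invoke Serrano's theorem that strictly nef divisors on abelian varieties are ample \cite[Proposition~1.4]{Ser95}. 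Since you are working on a singular klt surface, all of these Riemann--Roch computations also require first passing to a resolution, which you do not address.
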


Now  we consider Question \ref{ques_serrano}  when $X$ is a singular threefold.   
Theorem \ref{main_Goren_ter_3fold} below generalizes \cite[Theorem 0.4]{CCP08} to the singular setting. 
Terminal singularities being isolated canonical, we shall show Theorem \ref{main_Goren_ter_3fold}  in a more general category of singularities (cf.~Theorem \ref{main_theorem_Goren_ter_3fold}). 
The condition on singularities is  to avoid flips when we run the minimal model program (MMP for short); see  
Lemmas \ref{lem_del14}, \ref{lem_canonical_terminal}, and Remark \ref{rem_composition_conic}.

\begin{theorem}(see Theorem \ref{main_theorem_Goren_ter_3fold} for a more general version)\label{main_Goren_ter_3fold}
Let $X$ be a $\mathbb{Q}$-factorial Gorestein terminal projective threefold, and $L_X$ a strictly nef $\mathbb{Q}$-divisor on $X$.
Suppose that either $\kappa(X)\neq 0$ or $q^{\circ}(X)\neq 0$.
Then $K_X+tL_X$ is ample for sufficiently large $t\gg 1$.
\end{theorem}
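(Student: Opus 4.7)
The plan is to run a $K_X$-MMP, distinguish cases based on $\kappa(X)$ and $q^\circ(X)$, and reduce (on general fibers of induced fibrations) to the surface case recorded in Proposition~\ref{main_thm_surface}. By the singularity hypothesis, Lemmas~\ref{lem_del14} and \ref{lem_canonical_terminal} ensure that the $K_X$-MMP involves only divisorial contractions. I would first check that at each contraction $\phi_i\colon X_i\to X_{i+1}$ the pushforward $\phi_{i*}L_{X_i}$ remains strictly nef on $X_{i+1}$, and that the invariants $\kappa$ and $q^\circ$ are preserved. The main observation is that any contracted curve $C$ satisfies $K_{X_i}\cdot C<0$ and $L_{X_i}\cdot C>0$, so the combined divisor $K_{X_i}+tL_{X_i}$ is strictly positive on $C$ already for $t\gg 1$. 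The MMP terminates at some $X^*$ that is either a minimal model with $K_{X^*}$ nef, or admits a Mori fibration $\phi\colon X^*\to Y$.

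\emph{Case $\kappa(X)\geq 1$.} Here $X^*$ is minimal and, by the abundance theorem for terminal threefolds, $K_{X^*}$ is semiample. If $\kappa(X)=3$, then $K_{X^*}$ is big and nef; one applies the basepoint free theorem on $X^*$ to $K_{X^*}+tL_{X^*}$ to obtain semiampleness, and strict nefness of $L_{X^*}$ prevents the resulting morphism from contracting curves, so $K_{X^*}+tL_{X^*}$ is already ample. If $\kappa(X)\in\{1,2\}$, the semiample divisor $K_{X^*}$ induces an Iitaka fibration $f\colon X^*\to Z$ with $\dim Z=\kappa(X)$ and general fiber $F$ of dimension at most two with $K_F\equiv 0$. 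Restricting $L_{X^*}$ to $F$ gives a strictly nef divisor on a surface or a curve, so Proposition~\ref{main_thm_surface} makes $L_{X^*}|_F$ ample, whence $L_{X^*}$ is $f$-ample. Combined with $K_{X^*}=f^*A$ for some ample class $A$ on $Z$, this gives ampleness of $K_{X^*}+tL_{X^*}$ on $X^*$ for $t\gg 1$. In both situations ampleness lifts back through the divisorial contractions of the MMP to yield ampleness of $K_X+tL_X$.

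\emph{Case $\kappa(X)=-\infty$.} The MMP ends in a Mori fibration $\phi\colon X^*\to Y$. If $\dim Y=0$ then $X^*$ is Fano, which forces $q^\circ(X^*)=0$ by Kawamata-Viehweg vanishing applied to quasi-\'etale covers; since $q^\circ$ is MMP-invariant on terminal threefolds, this contradicts the hypothesis $q^\circ(X)\neq 0$ (the case $\kappa=-\infty$, $q^\circ=0$ being excluded). Hence $\dim Y\in\{1,2\}$. On a general fiber of $\phi$ -- a del Pezzo surface or a smooth rational curve -- Proposition~\ref{main_thm_surface} again makes $L_{X^*}|_F$ ample, yielding $\phi$-ampleness of $L_{X^*}$. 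Adding a sufficiently large pullback of an ample divisor from $Y$, one concludes that $K_{X^*}+tL_{X^*}$ is ample for $t\gg 1$, which pulls back to $X$.

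\emph{Case $\kappa(X)=0$, $q^\circ(X)\neq 0$.} Here $K_{X^*}\equiv 0$ by abundance, and a Beauville-Bogomolov-type decomposition for terminal Calabi-Yau threefolds with nonzero augmented irregularity produces a quasi-\'etale cover $\tilde X^*\to X^*$ that splits as $A\times Y'$ with $A$ an abelian variety of positive dimension. The pullback of $L_{X^*}$ to $\tilde X^*$ is strictly nef, and since strictly nef divisors on abelian varieties are automatically ample, product rigidity and descent along the quasi-\'etale cover give ampleness of $L_{X^*}$ itself, so $K_{X^*}+tL_{X^*}\equiv tL_{X^*}$ is ample. The main obstacles in this plan are the careful preservation of strict nefness and of the invariant $q^\circ$ through each MMP step, and the delicate use of the Beauville-Bogomolov decomposition in the last case to transport ampleness from the abelian factor back to $X^*$ while keeping track of the quasi-\'etale cover.
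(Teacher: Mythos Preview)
Your plan has a central gap that undermines every case: the assertion that the pushforward $\phi_{i*}L_{X_i}$ remains strictly nef on $X_{i+1}$ is false in general. If $\varphi:X\to X'$ is a divisorial contraction with exceptional divisor $E$ mapping to a curve $C'$, then writing $L_X=\varphi^*L_{X'}-\nu E$ with $\nu>0$, one has $L_{X'}\cdot B'>0$ for every curve $B'\neq C'$, but there is no reason at all for $L_{X'}\cdot C'>0$. In fact the paper's Lemma~\ref{lem_L.C>0} is precisely the dichotomy ``either $K_X+tL_X$ is already ample, or $L_{X'}\cdot C'\le 0$''. The failure of strict nefness to descend is the entire difficulty, and it is why Section~\ref{sec_pf_thm1.6} is so intricate: one must analyze what happens on $X'$ when $L_{X'}$ is merely nef, track the nef divisor $D_{X'}=L_{X'}+\nu K_{X'}$, and argue by induction on the Picard number using Proposition~\ref{prop-q-effective} (an effective divisor numerically equivalent to $aL_X+bK_X$ forces ampleness) rather than by pushing $L$ forward.

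There is a second, logically independent gap: you claim the case $\kappa(X)=-\infty$, $q^\circ(X)=0$ is ``excluded'', but it is not. The hypothesis is ``$\kappa(X)\neq 0$ \emph{or} $q^\circ(X)\neq 0$'', so a uniruled threefold with $q^\circ=0$ satisfies it. This is in fact the hardest case, occupying all of Sections~\ref{section_4} and \ref{sec_pf_thm1.6}: one must classify the possible first MMP steps (Propositions~\ref{contr_curve_elementary}--\ref{prop_contr_surface_elementary}, Theorem~\ref{thm_contr_surface_conic}) and then run the delicate induction of Theorem~\ref{thm_3fold_surface_curve}. Finally, your ``lifting back'' step is not justified: pulling back an ample divisor along a birational morphism gives only a nef and big divisor, and the discrepancy between $\varphi^*(K_{X'}+tL_{X'})$ and $K_X+tL_X$ involves exceptional divisors whose sign you have not controlled. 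The paper avoids this entirely by working on $X$ throughout and using Proposition~\ref{prop-q-effective} as the endgame, rather than proving ampleness downstairs and lifting.
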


Recall that the \textit{augmented irregularity} $q^\circ(X)$ is defined as the maximum of the irregularities $q(\widetilde{X}):=h^1(\widetilde{X},\mathcal{O}_{\widetilde{X}})$, where $\widetilde{X}\to X$ runs over the (finite) quasi-\'etale covers of $X$ (cf.~\cite[Definition 2.6]{NZ10}). 


From our viewpoints, it is not satisfactory that the conditions of Theorem \ref{main_Goren_ter_3fold}  seem quite restrictive.  
We believe that, one can consider  Question \ref{gene_conj} itself (avoiding Question \ref{ques_serrano}) by analyzing the universal covering of $X$; see~the recent series papers  \cite{Cao19},  \cite{CH19}, \cite{CCM19}, \cite{Wan20} and \cite{MW21}.


Looking for a special section of the Albanese morphism, we  can show such $X$ in Question \ref{gene_conj} has vanishing irregularity  (cf.~Theorem \ref{thm_num_trivial}), and  finally give a positive answer to  Question \ref{gene_conj} for the threefold case. 
This also extends the second assertion of \cite[Theorem 1.2]{LOY19} from the case of  $X$ being smooth to the klt  pair case when $\dim X=3$.
\begin{theorem}\label{intro_main_prop}
Let $(X,\Delta)$ be a projective klt threefold pair with the anti-log canonical divisor $-(K_X+\Delta)$ being strictly nef.
Then $X$ is rationally connected.
\end{theorem}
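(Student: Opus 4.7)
The approach is to combine a vanishing of the augmented irregularity with a dimensional analysis of the MRC fibration. Since $-(K_X+\Delta)$ is strictly nef, $(-K_X-\Delta)\cdot H^2>0$ for any ample $H$, so $K_X+\Delta$ is not pseudo-effective and $X$ is uniruled by Boucksom--Demailly--Paun--Peternell. The first main step is to establish $q^\circ(X)=0$: after passing to a quasi-\'etale cover one may assume $q(X)>0$ for contradiction, and then study the Albanese morphism $\alb_X:X\to \Alb(X)$. Restricting $-(K_X+\Delta)$ to a general fibre together with adjunction produces a strictly nef anti-log-canonical divisor on the fibre, which by Proposition~\ref{main_thm_surface} (in the case $\dim \Alb(X)=1$) or by a direct degree argument (in the case $\dim \Alb(X)=2$) makes general fibres rationally connected; locating a ``special section'' of $\alb_X$ as signalled in the introduction then yields a curve on which the strict nefness of $-(K_X+\Delta)$ is violated.

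With $q^\circ(X)=0$ in hand, consider the MRC fibration $\pi:X\ratmap Z$, with $Z$ smooth projective and not uniruled. Uniruledness of $X$ forces $\dim Z\le 2$, and $X$ is rationally connected exactly when $\dim Z=0$. If $\dim Z=1$, then $Z$ is a curve of genus $\ge 1$, and the induced surjection $\Alb(X)\twoheadrightarrow \Alb(Z)$ yields $q(X)\ge 1$, contradicting $q^\circ(X)=0$.

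For $\dim Z=2$, after passing to a $\mathbb{Q}$-factorial model I would run a $(K_X+\Delta)$-MMP (with the singularity control provided by Lemmas \ref{lem_del14} and \ref{lem_canonical_terminal} and Remark \ref{rem_composition_conic}) to reach a Mori fibre space $\varphi:X'\to Y$, with $Y$ a normal projective klt surface birational to $Z$. Since $q^\circ$ is a birational invariant preserved by MMP steps, $q(Y)\le q^\circ(X')=0$. If $Y$ is uniruled then $q(Y)=0$ forces $Y$ to be rational, and the Graber--Harris--Starr theorem, applied to the rationally connected general fibre of $\varphi$, then gives that $X'$, and hence $X$, is rationally connected.

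The main obstacle is excluding the remaining possibility that $Y$ is non-uniruled with $q(Y)=0$, i.e., a klt surface of K3- or Enriques-type. The plan is to exploit the strict nefness of $L':=-(K_{X'}+\Delta')$ on $X'$, combined with the relative Picard rank one condition $\rho(X'/Y)=1$, to produce curves $C\subset X'$ whose image in $Y$ detects the pseudo-effectivity of $K_Y$ and on which strict nefness of $L'$ would force a numerical contradiction, thereby compelling $Y$ to be uniruled. Maintaining the strict nefness of $-(K_X+\Delta)$ through each divisorial contraction or flip of the MMP, and controlling the log structure inherited by $Y$ via adjunction, is the technically most delicate part of the argument.
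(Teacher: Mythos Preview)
Your first step, reducing to $q^\circ(X)=0$ via a special section of the Albanese map, matches the paper's Theorem~\ref{thm_num_trivial}. The divergence comes in how you handle the MRC base when $\dim Z=2$, and here there is a genuine gap.

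You propose to run a $(K_X+\Delta)$-MMP and reach a Mori fibre space $X'\to Y$, citing Lemmas~\ref{lem_del14}, \ref{lem_canonical_terminal} and Remark~\ref{rem_composition_conic} for singularity control. But those results are written for threefolds with \emph{isolated $\mathbb{Q}$-factorial Gorenstein canonical} singularities and \emph{no boundary}; they are designed precisely to avoid flips. For a general klt pair $(X,\Delta)$ flips occur, and none of that machinery applies. More seriously, strict nefness of $-(K_X+\Delta)$ is \emph{not} preserved by divisorial contractions or flips: after a step the pushforward $-(K_{X'}+\Delta')$ is nef but can vanish on the new curves created by the contraction. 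You flag this as ``the technically most delicate part'' but offer no mechanism to recover it, and without strict nefness on $X'$ your plan to ``produce curves on which strict nefness forces a numerical contradiction'' has no traction. Finally, even granting a Mori fibre space over a K3- or Enriques-type $Y$, relative Picard rank one does not by itself give enough rigidity to derive a contradiction; your sketch there is a hope rather than an argument.

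The paper sidesteps all of this by invoking the structure theorem of Matsumura--Wang \cite{MW21}: after a quasi-\'etale cover the MRC fibration is \emph{locally constant} over a base $Y$ with $K_Y\equiv 0$ admitting a Beauville--Bogomolov-type decomposition. The vanishing $q^\circ(X)=0$ kills the abelian factor, and when $\dim Y=2$ the base is simply connected (\cite{GGK19}), so local constancy forces a genuine product $X'\cong\mathbb{P}^1\times Y$. On that product a one-line curve count with $\Delta'$ gives the contradiction. The key input you are missing is this locally constant fibration/product structure; without it, an MMP-based approach does not close.
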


\begin{ack}
The author would like to thank Professor De-Qi Zhang for numerous inspiring discussions.
The author would also like to thank Professor Vladimir Lazi\'c
for pointing out \cite[Corollary 1.8]{HL20}, Professor Wenhao Ou for useful suggestions, and Professor Juanyong Wang for valuable discussions  to improve the paper.
The author is supported by President's Scholarships of NUS.
\end{ack}

\section{Preliminaries}\label{section2}
Throughout this paper, we refer to \cite[Chapter 2]{KM98} for different kinds of singularities.
By a \textit{projective klt (resp. canonical, dlt) pair} $(X,\Delta)$, we mean that $X$ is a normal projective variety, $\Delta\ge 0$, $K_X+\Delta$ is $\mathbb{Q}$-Cartier, and  $(X,\Delta)$ has only klt (resp. canonical, dlt) singularities.
For the convenience of readers, we shall recall  several early results 
and extend some of them to the singular setting 
to keep pace with the present situation. 

\begin{theorem}(\cite[Theorem 3.9]{Ueh00})\label{thm_canonical_k}
Let $X$ be a canonical 3-fold with strictly nef anti-canonical divisor $-K_X$.
Then $-K_X$ is ample.
\end{theorem}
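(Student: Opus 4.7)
The plan is to first reduce the question to showing that $-K_X$ is big, and then to establish bigness by a non-vanishing argument combined with restriction to an anti-canonical section.

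Reduction to bigness: assume $-K_X$ is nef and big. Since $(X,0)$ is klt (canonical implies klt) and $-2K_X = (-K_X) + (-K_X)$ is nef and big, the Kawamata--Shokurov base-point-free theorem yields that $|-mK_X|$ is base-point-free for some $m \gg 0$, defining a morphism $\phi: X \to Y$. A positive-dimensional fiber of $\phi$ would contain a curve $C$ with $-K_X \cdot C = 0$, contradicting strict nefness; hence $\phi$ is finite and $-K_X$ is ample. It therefore suffices to show $(-K_X)^3 > 0$.

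To prove bigness, I would argue by contradiction and suppose $(-K_X)^3 = 0$. Strict nefness excludes $K_X \equiv 0$, so by a non-vanishing theorem for canonical threefolds with nef $-K_X$ (Kawamata--Miyaoka type) one may choose an effective $D \in |-mK_X|$ for some $m > 0$. Decomposing $D = \sum a_i D_i$ into prime components and using $(-K_X)^2 \cdot D_i \ge 0$ by nefness, the identity
\[
0 \;=\; (-K_X)^3 \;=\; \frac{1}{m} \sum_i a_i \,(-K_X)^2 \cdot D_i
\]
forces $(-K_X)^2 \cdot D_i = 0$ for every $i$. Consequently, on each $D_i$ the restriction $L_i := (-K_X)|_{D_i}$ is a nef $\mathbb{Q}$-divisor on the surface $D_i$ with $L_i^2 = 0$; moreover $L_i$ remains strictly nef, since every curve in $D_i$ is a curve in $X$.

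The heart of the proof -- and what I expect to be the main obstacle -- is to derive a contradiction from the existence of such an $L_i$ on the (a priori singular, possibly non-normal) surface $D_i$. Passing to the minimal resolution $\widetilde{D}_i$ of the normalization of $D_i$, one expects the pullback of $L_i$ to remain nef with self-intersection zero; the surface case of Serrano's question (\emph{cf.}\ Proposition \ref{main_thm_surface}), or the classical structure theorem for nef divisors of numerical dimension one on a smooth projective surface, then forces a fibration whose general fiber $F$ satisfies $L_i \cdot F = 0$, and pushing $F$ back to $X$ produces a curve violating strict nefness. The delicate point -- and where the canonical-singularity hypothesis on $X$ is essential -- is to control the singularities and possible non-normality of $D_i$ via adjunction so that both strict nefness and the equation $L_i^2 = 0$ are preserved when one transfers the problem to the smooth surface $\widetilde{D}_i$.
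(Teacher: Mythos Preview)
The paper does not supply its own proof of this theorem; it is quoted from \cite[Theorem 3.9]{Ueh00} and used as a black box throughout (e.g.\ in Theorem~\ref{thm_alb}, Proposition~\ref{contr_curve_elementary}, Proposition~\ref{prop_Exc_surface_point}, and Notation~\ref{not_bir_surface_curve}). There is therefore nothing in the present paper to compare your proposal against.

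On the merits of your sketch: the reduction to bigness via the base-point-free theorem is correct and standard. The genuine gap is the non-vanishing step. Your appeal to a ``Kawamata--Miyaoka type'' non-vanishing is not a citable result in the regime you need: Shokurov's non-vanishing requires a nef and big auxiliary divisor, and Kawamata--Viehweg vanishing likewise needs bigness to control the higher cohomology of $-mK_X$ --- but bigness of $-K_X$ is precisely what you are trying to prove. Producing a non-zero section of $|-mK_X|$ under the hypothesis $(-K_X)^3 = 0$ is already a substantial part of the problem, and Uehara's argument (which passes through Riemann--Roch, Miyaoka's pseudo-effectivity of $c_2$, and an analysis of extremal contractions) does not proceed by simply restricting to an anticanonical member. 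Your surface-restriction step, while reasonable in spirit and correctly identified as delicate, presupposes the hardest part of the proof.
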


\begin{proposition}\label{prop_strict_uniruled}
Let $(X,\Delta)$ be a projective pair such that $-(K_X+\Delta)$ is strictly nef.
Then $X$ is uniruled.	
\end{proposition}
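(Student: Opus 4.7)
The plan is to pass to a smooth resolution and invoke the Boucksom--Demailly--Paun--Peternell (BDPP) duality: a smooth projective variety $Y$ is uniruled if and only if $K_Y$ is not pseudo-effective. Since uniruledness is a birational invariant, it suffices to exhibit a smooth birational model of $X$ whose canonical divisor fails to be pseudo-effective.

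First, I would take a log resolution $\pi : Y \to X$ of $(X,\Delta)$ and write
\[
K_Y \;=\; \pi^*(K_X + \Delta) \;-\; \widetilde{\Delta}_Y \;+\; E,
\]
where $\widetilde{\Delta}_Y$ is the strict transform of $\Delta$ and $E$ is a $\pi$-exceptional $\mathbb{Q}$-divisor. Fix a very ample divisor $H$ on $X$, set $n := \dim X$, and test $K_Y$ against the $1$-cycle class $\alpha := (\pi^*H)^{n-1}$ on $Y$. The projection formula (together with $\pi_* E = 0$ and $\pi_* \widetilde{\Delta}_Y = \Delta$) yields
\[
K_Y \cdot \alpha \;=\; (K_X+\Delta)\cdot H^{n-1} \;-\; \Delta \cdot H^{n-1}.
\]
For general members $H_1,\dots,H_{n-1} \in |H|$, Bertini ensures that $C := H_1 \cap \cdots \cap H_{n-1}$ is an irreducible curve meeting $\Supp\Delta$ properly, so $\Delta \cdot H^{n-1} = \deg(\Delta|_C) \geq 0$; meanwhile $(K_X+\Delta)\cdot H^{n-1} = (K_X+\Delta)\cdot C < 0$ by strict nefness of $-(K_X+\Delta)$. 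Altogether $K_Y \cdot \alpha < 0$.

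Finally, the class $\alpha = (\pi^*H)^{n-1}$ lies in the closure of the movable cone of curves on $Y$ (for instance as the limit of complete intersections $(\pi^*H + \varepsilon A)^{n-1}$ of ample classes, with $A$ ample on $Y$ and $\varepsilon \searrow 0$). By BDPP duality, the inequality $K_Y \cdot \alpha < 0$ then forces $K_Y$ to fail pseudo-effectiveness, so $Y$---and hence $X$---is uniruled. The only subtlety I foresee is a bookkeeping one: making sense of $\Delta \cdot H^{n-1}$ when $\Delta$ is only a Weil divisor (which need not be $\mathbb{Q}$-Cartier, as the pair hypothesis only requires $K_X + \Delta$ to be $\mathbb{Q}$-Cartier). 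However, since $C$ is a curve avoiding $\Supp\Delta$ by Bertini, this intersection number is unambiguously $\deg(\Delta|_C) \geq 0$, which is all that is needed in the sign analysis above.
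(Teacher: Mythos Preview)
Your argument is correct and rests on the same core input as the paper: BDPP on a smooth resolution. The paper packages it as a contradiction---if $X$ were not uniruled then $K_X$ would be pseudo-effective as a Weil divisor, so $-\Delta=-(K_X+\Delta)+K_X$ would be pseudo-effective, forcing $\Delta=0$ and hence $-K_X$ strictly nef, which is absurd---whereas you test $K_Y$ directly against the movable class $(\pi^*H)^{n-1}$; these are two sides of the same coin.

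One small cleanup: your closing ``subtlety'' is a non-issue, because the needed inequality $\widetilde{\Delta}_Y\cdot(\pi^*H)^{n-1}\ge 0$ holds on the smooth $Y$ simply as (effective divisor)$\cdot$(nef class)$^{n-1}$, so you never have to make sense of $\Delta\cdot H^{n-1}$ on $X$ at all. Also, Bertini does not give a curve \emph{avoiding} $\Supp\Delta$, only one not contained in it---but as just noted, you need neither statement.
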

\begin{proof}
Suppose the contrary that $X$ is not uniruled.
Then $K_X$ is pseudo-effective as a Weil divisor (cf. \cite{BDPP13}) by considering a resolution of $X$.
So $-\Delta=-(K_X+\Delta)+K_X$ is pseudo-effective (as a 
Weil divisor) and thus $\Delta=0$.
This in turn implies the strict nefness of $-K_X$, a contradiction to the pseudo-effectivity of $K_X$.	
\end{proof}

\begin{lemma}(cf.~\cite[Lemma 1.1]{Ser95})\label{lem_strict_nef_k}
Let $(X,\Delta)$ be a projective klt pair, and $L_X$  a strictly nef $\mathbb{Q}$-divisor on $X$.
Then $(K_X+\Delta)+tL_X$ is  strictly nef for every $t>2m\dim X$ where $m$ is the Cartier index of $L_X$.
\end{lemma}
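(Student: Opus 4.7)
The plan is to mimic Serrano's classical argument (\cite[Lemma 1.1]{Ser95}), replacing his use of the Mori cone theorem for smooth varieties by its klt generalization due to Kawamata--Kollár--Mori, and keeping track of the index $m$ which encodes how far $L_X$ is from being Cartier. Fix $t>2m\dim X$ and pick an arbitrary irreducible curve $C\subset X$; we shall show $((K_X+\Delta)+tL_X)\cdot C>0$.

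The first, easy case is $(K_X+\Delta)\cdot C\ge 0$: since $L_X$ is strictly nef we have $L_X\cdot C>0$, and hence $((K_X+\Delta)+tL_X)\cdot C\ge tL_X\cdot C>0$. The substantive case is $(K_X+\Delta)\cdot C<0$. Here I would invoke the Cone Theorem for the klt pair $(X,\Delta)$: one can write, inside $\NE(X)_{\mathbb{R}}$,
\[
[C]\;=\;\sum_{i} a_i[C_i]\;+\;[N],
\]
where $a_i\ge 0$, each $C_i$ is a rational curve generating a $(K_X+\Delta)$-negative extremal ray with Kawamata's length bound $0<-(K_X+\Delta)\cdot C_i\le 2\dim X$, and $[N]\in\NE(X)_{(K_X+\Delta)\ge 0}$. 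Intersecting $K_X+\Delta$ with this relation and using the assumption $(K_X+\Delta)\cdot C<0$, at least one coefficient $a_i$ must be strictly positive.

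Next I would exploit the Cartier index $m$ of $L_X$: since $mL_X$ is a Cartier divisor and $C_i$ is an honest curve with $L_X\cdot C_i>0$, the integer $mL_X\cdot C_i$ is a positive integer, whence $L_X\cdot C_i\ge 1/m$. Combined with the length bound, for every $i$ one has
\[
((K_X+\Delta)+tL_X)\cdot C_i\;\ge\;-2\dim X+\frac{t}{m}\;>\;0,
\]
because $t>2m\dim X$. For the class $[N]$, both $(K_X+\Delta)\cdot N\ge 0$ and $L_X\cdot N\ge 0$ (the latter by nefness of $L_X$). Adding all contributions with at least one $a_i>0$ yields $((K_X+\Delta)+tL_X)\cdot C>0$, as desired.

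The only point requiring care is the appeal to Kawamata's length bound $-(K_X+\Delta)\cdot C_i\le 2\dim X$ in the klt (as opposed to smooth) setting; this is standard and is precisely why one should not expect a sharper coefficient than $2\dim X$ in the statement. The use of the index $m$ is the reason the threshold is $2m\dim X$ rather than Serrano's $2\dim X$, and it is also why the hypothesis that $L_X$ be a $\mathbb{Q}$-divisor (with a definite Cartier index) is necessary.
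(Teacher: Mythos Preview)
Your proof is correct and follows essentially the same route as the paper's: both invoke the klt cone theorem to decompose $[C]$ into a $(K_X+\Delta)$-nonnegative part plus extremal rational curves with the length bound $2\dim X$, then use that $mL_X$ is Cartier to get $L_X\cdot C_i\ge 1/m$ and conclude via the case split on the sign of $(K_X+\Delta)\cdot C$. The only cosmetic difference is that the paper writes the decomposition with all $a_i>0$ from the outset, whereas you allow $a_i\ge 0$ and then observe some $a_i$ must be positive.
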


\begin{proof}
By the cone theorem (cf.~\cite[Theorem 3.7]{KM98}), every curve $C$ in $X$ is numerically equivalent to a linear combination 
$M+\sum a_iC_i$ (a finite sum), where  $M$ is  pseudo-effective such that $M\cdot (K_X+\Delta)\ge 0$ and the $C_i$'s are (integral) rational curves satisfying $0<C_i\cdot (-K_X-\Delta)\le 2\dim X$	 with $a_i>0$.
Since $mL_X$ is Cartier and strictly nef,  we have $L_X\cdot M\ge 0$, and $L_X\cdot C_i\ge \frac{1}{m}$ for all $i$.
Therefore, for each $C_i$, 
\begin{align} \tag{\dag}\label{eq_lem_strict}
	((K_X+\Delta)+tL_X)\cdot C_i\ge tL_X\cdot C_i-2\dim X>0.
\end{align}
If $(K_X+\Delta)\cdot C\ge 0$, then $((K_X+\Delta)+tL_X)\cdot C>0$ since $L_X\cdot C>0$.
If $(K_X+\Delta)\cdot C<0$, then the decomposition of $C$ contains some $C_i$; hence our lemma follows from (\ref{eq_lem_strict}).
\end{proof}

\begin{lemma}\label{lem-big-ample}
Let $(X,\Delta)$ be a projective klt pair, 
and $L_X$  a strictly nef $\mathbb{Q}$-divisor on $X$.	
Suppose that $a(K_X+\Delta)+bL_X$ is big for some $a,b\ge 0$.
Then $(K_X+\Delta)+tL_X$ is ample for sufficiently large $t$. 
\end{lemma}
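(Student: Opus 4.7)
The plan is to show that, for $t \gg 1$, the $\mathbb{Q}$-Cartier divisor $D_t := (K_X+\Delta) + tL_X$ is simultaneously strictly nef and semi-ample, which together force ampleness. Indeed, if $D_t$ is semi-ample then a sufficiently divisible multiple induces a morphism $\phi \colon X \to Y$ onto a normal projective variety with $D_t \sim_{\mathbb{Q}} \phi^* A$ for some ample $\mathbb{Q}$-divisor $A$ on $Y$; any curve $C$ contracted by $\phi$ would satisfy $D_t \cdot C = 0$, contradicting strict nefness, so $\phi$ is finite and $D_t = \phi^* A$ is ample.

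Strict nefness of $D_t$ for $t > 2m \dim X$ is immediate from Lemma \ref{lem_strict_nef_k}. Semi-ampleness I would extract from the Kawamata--Shokurov base-point-free theorem applied to the klt pair $(X,\Delta)$: since $D_t$ is nef, it suffices to exhibit an integer $N \geq 1$ such that $N D_t - (K_X+\Delta)$ is nef and big. Taking $N = 2$ gives
\[
2 D_t - (K_X+\Delta) = (K_X+\Delta) + 2tL_X = D_{2t},
\]
which is strictly nef (hence nef) by a second application of Lemma \ref{lem_strict_nef_k}. Thus the only remaining task is to verify that $D_t$ is big for $t \gg 1$.

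This is where the hypothesis enters. Write the given big combination as $\alpha(K_X+\Delta) + \beta L_X$ with $\alpha, \beta \ge 0$ not both zero. If $\alpha > 0$, then $(K_X+\Delta) + (\beta/\alpha) L_X$ is big, and for $t \geq \beta/\alpha$ one rewrites
\[
D_t = \bigl[(K_X+\Delta) + (\beta/\alpha) L_X\bigr] + (t - \beta/\alpha) L_X,
\]
a sum of a big and a nef $\mathbb{Q}$-divisor, hence big. If instead $\alpha = 0$, then $L_X$ itself is big and nef, and $D_t = (K_X+\Delta) + t L_X$ lies in the interior of the big cone for $t \gg 1$ by openness of bigness under nef perturbations (equivalently by computing $D_t^{\dim X} > 0$ as a polynomial in $t$ whose leading coefficient is $L_X^{\dim X} > 0$). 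In either case, $D_t$ is big for all $t \gg 1$.

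I do not foresee a serious obstacle here: the argument is essentially a bookkeeping assembly of the base-point-free theorem with the strict-nef lemma, and the only real content is the classical semi-ample-plus-strictly-nef $\Rightarrow$ ample mechanism. In particular, no MMP reduction or case analysis on the geometry of $X$ is required, and the lemma holds verbatim in every dimension.
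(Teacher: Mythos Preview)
Your argument is correct and is essentially identical to the paper's own proof: both establish strict nefness of $D_t$ via Lemma~\ref{lem_strict_nef_k}, verify bigness of $D_t$ for $t\gg 1$ by writing it as the given big divisor plus a nef (or pseudo-effective) remainder, and then apply the base-point-free theorem to $2D_t-(K_X+\Delta)=D_{2t}$ to get semi-ampleness, whence ampleness follows from strict nefness exactly as you describe.
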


\begin{proof}
If $a=0$, then $L_X$ is big. 
If $a\neq 0$, then $(K_X+\Delta)+\frac{b}{a}L_X$ is big.
In both cases, it follows from  Lemma \ref{lem_strict_nef_k} that $(K_X+\Delta)+tL_X$ is strictly nef and big for $t\gg 1$, noting that nef divisors are always pseudo-effective.
Then $2((K_X+\Delta)+tL_X)-(K_X+\Delta)$ is also nef and big.
By the base-point-free theorem (cf.~\cite[Theorem 3.3]{KM98}), some multiple of $(K_X+\Delta)+tL_X$ defines a morphism $X\to\mathbb{P}^N$, which
 is finite by the projection formula.
Therefore, $(K_X+\Delta)+tL_X$ is ample.
Our lemma is proved.
\end{proof}

We denote by $\overline{\textup{NE}}(X)$ (resp. $\overline{\textup{ME}}(X)$) the \textit{Mori cone} (resp. \textit{movable cone}) of $X$ (cf.~\cite{BDPP13}).
The following lemma characterizes the situation when $K+tL$ is not big.
\begin{lemma}\label{lem-not-big-some}
Let $(X,\Delta)$ be a projective klt threefold pair, 
and $L_X$  a strictly nef $\mathbb{Q}$-divisor on $X$.	
Suppose $(K_X+\Delta)+uL_X$ is not big for some rational number $u>6m$ with $m$  the Cartier index of $L_X$.
Then $(K_X+\Delta)^3=(K_X+\Delta)^2\cdot L_X=(K_X+\Delta)\cdot L_X^2=L_X^3=0$. 	
Moreover, there exists a  class $0\neq \alpha\in\overline{\textup{ME}}(X)$ such that 
$(K_X+\Delta)\cdot\alpha=L_X\cdot\alpha=0$. 
\end{lemma}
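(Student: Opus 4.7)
The plan is to first invoke Lemma~\ref{lem_strict_nef_k} so that $D := (K_X+\Delta)+uL_X$ is strictly nef, hence nef, for every $u>6m=2m\dim X$; since $D$ is nef but not big on the projective threefold $X$, its top self-intersection must vanish, so $D^3=0$.

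I then show $L_X^3=0$: otherwise $L_X$ is nef and big, and choosing $\varepsilon>0$ small with $u-\varepsilon>6m$, I can write $D$ as the sum of the nef divisor $(K_X+\Delta)+(u-\varepsilon)L_X$ and the big divisor $\varepsilon L_X$, forcing $D$ big, a contradiction. With $L_X^3=0$, view $f(t):=((K_X+\Delta)+tL_X)^3$ as a polynomial in $t$ of degree at most $2$, non-negative for $t>6m$ (by nefness of $(K_X+\Delta)+tL_X$ there) and vanishing at $t=u$. A short case analysis using convexity of the big cone kills the coefficients of $f$ one at a time: if $(K_X+\Delta)L_X^2\neq 0$, the leading coefficient is positive and $u$ must be a double root, so $f(u\pm\varepsilon)>0$ and both $(K_X+\Delta)+(u\pm\varepsilon)L_X$ are nef and big; convexity of the big cone then forces their midpoint $D$ to be big, a contradiction. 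Once $(K_X+\Delta)L_X^2=0$, $f$ is linear with an interior zero on which it is locally non-negative, forcing $(K_X+\Delta)^2 L_X=0$; then $f\equiv(K_X+\Delta)^3$, whence $(K_X+\Delta)^3=0$.

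For the \emph{moreover} part, the four vanishings just obtained, expanded through $D=(K_X+\Delta)+uL_X$, give $D^3=D^2\cdot L_X=D\cdot L_X^2=L_X^3=0$, hence $(aD+bL_X)^3=0$ for all $a,b\ge 0$. Thus the (at most) $2$-dimensional sub-cone $S:=\{aD+bL_X:a,b\ge 0\}$ of $N^1(X)_{\mathbb{R}}$ consists entirely of nef classes with vanishing cube---each pseudo-effective but not big---so $S\subset\partial\overline{\PE}(X)$. By BDPP (\cite{BDPP13}), $\overline{\PE}(X)$ and $\overline{\textup{ME}}(X)$ are dual pointed closed convex cones, so the map $F\mapsto F^{*}:=\overline{\textup{ME}}(X)\cap F^{\perp}$ is an order-reversing bijection between faces sending proper faces to non-zero ones. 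The smallest face of $\overline{\PE}(X)$ containing $S$ is proper, so its dual contains a non-zero class $\alpha$; by construction $\alpha\cdot D=\alpha\cdot L_X=0$, and consequently $\alpha\cdot(K_X+\Delta)=\alpha\cdot(D-uL_X)=0$, as required.

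The main obstacle I expect is the polynomial/convexity step: one must carefully verify that, in the singular klt setting, the equivalence ``nef and $D^3>0\iff$ big'' holds on $X$ (so that the bigness/non-bigness dichotomy driving the argument is valid), and then kill the coefficients of $f$ successively without circularity. Once the four vanishings are in place, the final duality step is essentially formal and reduces to the BDPP face correspondence applied to the flat sub-cone $S\subset\partial\overline{\PE}(X)$.
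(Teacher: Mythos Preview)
Your argument is correct, but it takes a noticeably longer path than the paper's for the first part. The paper exploits the fact that $(K_X+\Delta)+sL_X$ is nef for \emph{every} $s\in(6m,u]$, not just at $s=u$: writing $u'=u-6m$, $D_1=(K_X+\Delta)+(6m+\tfrac{u'}{2})L_X$ and $D_2=\tfrac{u'}{2}L_X$, both $D_1$ and $D_2$ are nef and $D=D_1+D_2$. Expanding $(D_1+D_2)^3=0$, every cross term $D_1^iD_2^{3-i}$ is an intersection of nef classes on a projective threefold, hence non-negative; so all four vanish simultaneously, and the desired vanishings drop out by linear algebra. This replaces your polynomial/double-root/convexity-of-big-cone analysis with a one-line positivity trick. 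Your worry about ``nef with positive top power $\Leftrightarrow$ big'' in the singular klt setting is unfounded: this equivalence holds on any projective variety and is purely numerical.

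For the ``moreover'' part your route is genuinely different and arguably slicker than the paper's. The paper first finds $\alpha\in\overline{\textup{ME}}(X)$ with $D\cdot\alpha=0$ via BDPP, then invokes the cone theorem: if $L_X\cdot\alpha>0$ then $(K_X+\Delta)\cdot\alpha<0$, so $\alpha$ decomposes with a contribution from $K$-negative extremal rational curves $C_i$, each of which satisfies $D\cdot C_i>0$ by the length bound, a contradiction. You instead stay purely in convex geometry: since every $aD+bL_X$ is nef with zero cube, the cone $S$ sits in $\partial\overline{\PE}(X)$, and a supporting hyperplane at a relative-interior point such as $D+L_X$ supplies $0\neq\alpha\in\overline{\textup{ME}}(X)$ with $\alpha\cdot D=\alpha\cdot L_X=0$. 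One caution: your phrasing ``order-reversing bijection between faces'' overstates the duality (only exposed faces are in bijection), but the conclusion you need---that a proper face has non-zero dual---follows directly from the supporting-hyperplane argument, so no harm is done.
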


\begin{proof}
Let $u'=u-6m>0$, $D_1=(K_X+\Delta)+(\frac{u'}{2}+6m)L_X$ and $D_2=\frac{u'}{2}L_X$.	
Then $(K_X+\Delta+uL_X)^3=(D_1+D_2)^3=0$.
Since both $D_1$ and $D_2$ are nef (cf.~Lemma \ref{lem_strict_nef_k}), the vanishing $D_1^3=D_1^2\cdot D_2=D_1\cdot D_2^2=D_2^3=0$ yields the first part of our lemma.

Since the nef divisor $K_X+\Delta+uL_X$ is not big, there exists a class $\alpha\in\overline{\textup{ME}}(X)$ such that $(K_X+\Delta+uL_X)\cdot\alpha=0$ (cf.~\cite[Theorem 2.2 and the remarks therein]{BDPP13}). 
Suppose that $L_X\cdot\alpha\neq 0$.  
Then $L_X\cdot\alpha>0$ and thus $(K_X+\Delta)\cdot\alpha<0$.
By the cone theorem (cf.~\cite[Theorem 3.7]{KM98}), we have 
$\alpha=M+\sum a_iC_i$ 
where $M$ is a class lying in $\overline{\textup{NE}}(X)_{K_X+\Delta\ge 0}$, $a_i>0$, and the $C_i$ are extremal rational curves with $0<-(K_X+\Delta)\cdot C_i\le 6$.
Now, for each $i$, the intersection $(K_X+\Delta+uL_X)\cdot C_i>0$, a contradiction to the choice of $\alpha$.
So we have $(K_X+\Delta)\cdot \alpha=L_X\cdot\alpha=0$.
\end{proof}

Now we give an alternative proof of Proposition \ref{main_thm_surface} (cf.~\cite[Corollary 1.8]{HL20}) by applying a recent result on almost strictly nef divisors on surfaces. 
Recall that a $\mathbb{Q}$-Cartier divisor $L_X$ on a  normal projective variety $X$ is said to be \textit{almost strictly nef}, if there are a birational morphism $\pi:X\to Y$ to some  projective variety $Y$ and a strictly nef divisor $L_Y$ on $Y$ such that $L_X=\pi^*L_Y$ (cf.~\cite[Definition 1.1]{CCP08}).

\begin{proof}[\textup{\textbf{Proof of Proposition  \ref{main_thm_surface}}}]
Replacing $L_X$ by a multiple, we may assume that $L_X$ is  Cartier.
Taking a minimal resolution $\pi:\widetilde{X}\to X$, we see that $L_{\widetilde{X}}:=\pi^*L_X$ is almost strictly nef. 
By \cite[Theorem 20]{Cha20}, $K_{\widetilde{X}}+tL_{\widetilde{X}}$ is big for $t\gg 1$.
Then $K_X+tL_X$	 is big (as a Weil divisor) for  $t\gg 1$ (cf.~\cite[Lemma 4.10]{FKL16}). 
So $K_X+\Delta+tL_X$ is big (as a Cartier divisor).
Since $K_X+\Delta+tL_X$ is strictly nef for $t\gg 1$ (cf.~Lemma \ref{lem_strict_nef_k}),
the first part of our theorem follows from Lemma \ref{lem-big-ample}.   
The second part  follows from \cite{Zha06}.
\end{proof}

In what follows, we slightly generalizes Proposition \ref{main_thm_surface}   to the following (not necessarily normal)  $\mathbb{Q}$-Gorenstein surface case (cf.~\cite[Conjecture 1.3]{CCP08}). 
This is also mentioned at the end of \cite[Section 2]{Ser95}.

For a $\mathbb{Q}$-factorial normal projective variety $X$ and a prime divisor $S\subseteq X$,  the \textit{canonical divisor} $K_S\in\textup{Pic}(S)\otimes\mathbb{Q}$ is defined by  
$K_S:=\frac{1}{m}(mK_X+mS)|_S$, where $m\in\mathbb{N}$ is the smallest positive integer such that both $mK_X$ and $mS$ are Cartier divisors on $X$. 

\begin{proposition}\label{prop_Q-Goren_surface}
Let $(X,\Delta)$ be a $\mathbb{Q}$-factorial dlt threefold pair,  $S$ a prime divisor on $X$, and  $L_S$ a strictly nef  divisor on $S$.
Then $K_S+tL_S$ is ample for sufficiently large $t$.
\end{proposition}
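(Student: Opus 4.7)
The plan is to reduce to the normal surface setting of Proposition \ref{main_thm_surface} by passing through the normalization of $S$. Let $\nu: S^\nu \to S$ denote the normalization, a finite birational morphism. Since ampleness descends along finite surjective morphisms, it suffices to prove ampleness of $\nu^*(K_S + tL_S)$ on $S^\nu$. Writing $\nu^*K_S = K_{S^\nu} + C$ for the effective conductor/different $C \geq 0$ produced by adjunction for the normalization, this pullback equals $K_{S^\nu} + C + tL_{S^\nu}$, where $L_{S^\nu} := \nu^*L_S$ is strictly nef on $S^\nu$: for any irreducible curve $E \subset S^\nu$ its image $\nu_*E$ is a nonzero curve on $S$, so $L_{S^\nu} \cdot E = L_S \cdot \nu_*E > 0$.

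For the bigness half I would mimic the proof of Proposition \ref{main_thm_surface} on $S^\nu$. Let $\pi: \widetilde{S} \to S^\nu$ be the minimal resolution. Then $\pi^*L_{S^\nu}$ is almost strictly nef in the sense of \cite[Definition 1.1]{CCP08}, so Chaudhuri's \cite[Theorem 20]{Cha20} yields that $K_{\widetilde{S}} + t\pi^*L_{S^\nu}$ is big for $t \gg 1$. By \cite[Lemma 4.10]{FKL16}, this descends to bigness of $K_{S^\nu} + tL_{S^\nu}$ as a Weil $\mathbb{Q}$-divisor on $S^\nu$, and adding the effective $C$ preserves bigness; hence $\nu^*(K_S + tL_S)$ is big on $S^\nu$ for $t \gg 1$.

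To upgrade bigness to ampleness on the normal surface $S^\nu$ via Nakai--Moishezon, it remains to show that $K_{S^\nu} + C + tL_{S^\nu}$ is strictly nef for $t \gg 1$. My plan is to equip $(S^\nu, C)$ with a dlt (or at least lc) pair structure through Koll\'ar's adjunction, so that the cone theorem for this surface pair bounds $-(K_{S^\nu} + C) \cdot R \leq 4$ on extremal rational curves $R$; combined with the Cartier-denominator bound $L_{S^\nu} \cdot R \geq 1/m$, an analogue of Lemma \ref{lem_strict_nef_k} then produces the strict nefness. To build this dlt structure, I would write $\Delta = \delta S + \Delta_0$ with $S \not\subset \Supp(\Delta_0)$ and $\delta \in [0,1)$, and realise $S$ as a divisorial log canonical centre of some dlt pair $(X, S + \Delta')$ --- possibly after a dlt (or lc) modification of $(X, S + (1-\epsilon)\Delta_0)$ --- so that Koll\'ar's adjunction yields a dlt boundary on $S^\nu$ dominating $C$.

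The main obstacle is precisely this dlt-adjunction step, because $S$ is not assumed to lie in $\lfloor \Delta \rfloor$, and the singularities of $X$ along $S$ may obstruct a direct promotion of $S$ to a log canonical centre; the $\mathbb{Q}$-factorial dlt hypothesis on $(X,\Delta)$ is precisely what should allow such a modification to exist. Once this structure is in place, bigness and strict nefness on the normal surface $S^\nu$ combine via Nakai--Moishezon to give ampleness of $\nu^*(K_S + tL_S)$, and descent along the finite morphism $\nu$ concludes the ampleness of $K_S + tL_S$ on $S$.
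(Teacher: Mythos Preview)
Your bigness argument is fine and close in spirit to the proof of Proposition~\ref{main_thm_surface}. The gap is exactly where you flag it: the strict nefness of $K_{S^\nu}+C+tL_{S^\nu}$. You need a cone theorem for the pair $(S^\nu,C)$, but nothing in the hypotheses forces this pair to be log canonical. The assumption is that $(X,\Delta)$ is dlt, not that $(X,S)$ is lc, and $S$ is an \emph{arbitrary} prime divisor, not a component of $\lfloor\Delta\rfloor$; by inversion of adjunction, $(S^\nu,\mathrm{Diff}_{S^\nu}(0))$ is lc only when $(X,S)$ is lc near $S$. If $(S^\nu,C)$ is not lc there is no uniform bound on $-(K_{S^\nu}+C)\cdot B$ over curves $B$, so the analogue of Lemma~\ref{lem_strict_nef_k} breaks. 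Your proposed fix via a dlt modification $g:X'\to X$ of $(X,S+(1-\epsilon)\Delta_0)$ does not close the gap: the restriction $g|_{S'}:S'\to S$ is only birational, and $(g|_{S'})^*L_S$ is zero on every $g|_{S'}$-exceptional curve, so strict nefness is lost precisely on the locus you introduced.

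The paper takes a different route that sidesteps the cone theorem on $S^\nu$ altogether. It invokes Serrano's original argument for \cite[Theorem~2.3]{Ser95}, which proves ampleness of $K_S+tL_S$ directly via Riemann--Roch on the minimal resolution $R$ of $T:=S^\nu$. The only step in Serrano's proof that requires adaptation to the singular setting is the sheaf injection
\[
f_*\mathcal{O}_R(rmK_R)\hookrightarrow j^*\mathcal{O}_S(rmK_S),
\]
and this is where the dlt hypothesis is actually used: it guarantees that $X$ is Cohen--Macaulay, which is exactly what \cite[Lemma~5-1-9]{KMM87} needs to produce $\omega_T^{[m]}\hookrightarrow j^*\mathcal{O}_S(mK_S)$; combined with \cite[(4.1) and Theorem~(2.1)]{Sak84} one obtains the injection. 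So the role of ``dlt'' in the paper's proof is Cohen--Macaulay-ness of the ambient threefold, not any adjunction-type control on the singularities of $(S^\nu,C)$.
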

\begin{proof}
Since $X$ is $\mathbb{Q}$-factorial, there exists some positive integer $m$ such that $m(K_X+S)$ is Cartier. 
Let $j:T\to S$ be the normalization, and $f:R\to T$  a minimal resolution.
In view of \cite[Proof of Theorem 2.3]{Ser95}, we only need to reprove the following injection	in \cite[Claim 4 of Proof of Theorem 2.3]{Ser95} for sufficiently large $r\gg 1$:
$$f_*\mathcal{O}_R(rmK_R)\hookrightarrow j^*\mathcal{O}_S(rmK_S).$$ 
Since $(X,\Delta)$ is a dlt pair,  $X$ is Cohen-Macaulay (cf.~\cite[Theorems 5.10 and 5.22]{KM98}).
By \cite[Lemma 5-1-9]{KMM87}, there is a natural injective homomorphism
$$\omega_T^{[m]}:=(\omega_T^{\otimes m})^{\vee\vee}=\mathcal{O}_T(mK_T)\hookrightarrow j^*\mathcal{O}_X(m(K_X+S))=j^*\mathcal{O}_S(mK_S).$$
On the other hand, it follows from \cite[Theorem (2.1)]{Sak84} that
$$f_*\mathcal{O}_R(f^*(mK_T))\cong\mathcal{O}_T(mK_T).$$
Here, our $mK_T$  is only a Weil divisor. 
Replacing $m$ by a multiple, we write $f^*(mK_T)=mK_R+\Gamma$ with $\Gamma$ being an effective (integral)  divisor (cf.~\cite[(4.1)]{Sak84}).
Then 
$$f_*\mathcal{O}_R(mK_R)\subseteq f_*(\mathcal{O}_R(mK_R)\otimes\mathcal{O}_R(\Gamma))=f_*\mathcal{O}_R(f^*(mK_T))\cong\mathcal{O}_T(mK_T)\hookrightarrow j^*\mathcal{O}_S(mK_S).$$
Hence, we get the inclusion as desired and  our proposition is proved. 
\end{proof}
As an application, we  extend \cite[Proposition 3.1]{Ser95} 
 to the singular setting (cf.~Proposition \ref{prop-q-effective}), 
which will be crucially used to prove the ampleness of $K+tL$.
\begin{proposition}\label{prop-q-effective}
Let $(X,\Delta)$ be a $\mathbb{Q}$-factorial dlt threefold pair,  and  $L_X$ a strictly nef $\mathbb{Q}$-divisor on $X$.
Suppose that there is a non-zero effective divisor numerically equivalent to $aL_X+bK_X$ for some  $a$ and $b$. 
Then $K_X+tL_X$ is ample for  $t\gg 1$.
\end{proposition}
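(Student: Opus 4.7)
The plan is to reduce the ampleness of $K_X+tL_X$ to a bigness statement, and then establish bigness by a positivity analysis on components of the given effective divisor. First I would note that since $(X,\Delta)$ is dlt, the underlying variety $X$ itself is klt (because discrepancies only increase when one drops $\Delta$). Hence Lemma~\ref{lem_strict_nef_k} applied with boundary $0$ gives that $K_X+tL_X$ is strictly nef for all $t\gg 1$, and Lemma~\ref{lem-big-ample} then reduces the proposition to showing that $K_X+tL_X$ is big for some sufficiently large $t$. For a nef $\mathbb{Q}$-Cartier divisor on a projective threefold, bigness is equivalent to positivity of the top self-intersection, so it suffices to prove $(K_X+tL_X)^3>0$ for $t\gg 1$.

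The argument would be by contradiction. Suppose $(K_X+tL_X)^3\equiv 0$ as a polynomial in $t$. Then the coefficients $L_X^3$, $K_X\cdot L_X^2$, $K_X^2\cdot L_X$ and $K_X^3$ must all vanish. Write $D=\sum n_iS_i\equiv aL_X+bK_X$ as a sum of prime components with $n_i>0$. Intersecting the numerical equivalence with $L_X^2$, $K_X\cdot L_X$ and $K_X^2$ respectively forces $D\cdot L_X^2=D\cdot K_X\cdot L_X=D\cdot K_X^2=0$. The identity $\sum n_i(L_X|_{S_i})^2=D\cdot L_X^2=0$ combined with $(L_X|_{S_i})^2\geq 0$ (nef restriction) then yields $(L_X|_{S_i})^2=0$ for every component $S_i$.

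Now I would feed each $S_i$ into Proposition~\ref{prop_Q-Goren_surface}: the restriction $L_X|_{S_i}$ is strictly nef on the prime divisor $S_i$, so $K_{S_i}+tL_X|_{S_i}$ is ample on $S_i$ for $t\gg 1$, and in particular big, so $(K_{S_i}+tL_X|_{S_i})^2>0$. Expanding this and using $(L_X|_{S_i})^2=0$ gives either $K_{S_i}\cdot L_X|_{S_i}>0$ or $K_{S_i}\cdot L_X|_{S_i}=0$ together with $K_{S_i}^2>0$. Via the paper's definition $mK_{S_i}=m(K_X+S_i)|_{S_i}$ (adjunction), each such positive surface intersection translates into the global data $K_X\cdot L_X\cdot S_i+S_i^2\cdot L_X$ (and analogously for $K_{S_i}^2$). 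Taking the weighted sum $\sum n_i K_{S_i}\cdot L_X|_{S_i}$ and using $K_X\cdot L_X\cdot D=0$ together with $D^2\cdot L_X=0$ and the non-negativity of cross terms $S_i\cdot S_j\cdot L_X=(S_j|_{S_i})\cdot L_X|_{S_i}\geq 0$ for $i\neq j$ should force a global positive intersection on $X$ (either $K_X\cdot L_X^2>0$ or $K_X^2\cdot L_X>0$), contradicting the standing vanishings.

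The main obstacle is the final bookkeeping step. The positivity on each surface $S_i$ is only a mild numerical statement, and distributing it back to global intersections via the weighted sums $\sum n_i(\cdot)$ versus $\sum n_i^2(\cdot)$ involves careful control over the cross intersections $S_i\cdot S_j\cdot L_X$ and the self-intersection terms $S_i^2\cdot L_X$. A secondary difficulty is handling the sub-case where $K_{S_i}\cdot L_X|_{S_i}=0$ forces $K_{S_i}^2>0$: this requires running an analogous analysis starting from the vanishing $D\cdot K_X^2=0$ and the identity $K_{S_i}^2=(K_X+S_i)^2|_{S_i}$, and once again tracing non-negative cross terms. Non-normality of the $S_i$ and the contribution of $\Delta$ through the dlt adjunction formula are technical complications but do not change the structure of the argument.
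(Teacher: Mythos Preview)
Your overall strategy matches the paper's exactly: reduce to bigness, assume all the mixed intersections vanish, restrict to components of the effective divisor, and invoke Proposition~\ref{prop_Q-Goren_surface}. However, your execution stalls at the ``bookkeeping'' step you yourself flag, and the paper shows how to bypass it cleanly with two observations you are missing.

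First, you only extract $(L_X|_{S_i})^2=0$ component-wise. The paper gets all three vanishings $S_i\cdot L_X^2=S_i\cdot K_X\cdot L_X=S_i\cdot K_X^2=0$ for \emph{each} $i$, by noting that $(K_X+tL_X)$ is nef for a range of $t$, hence $(K_X+tL_X)^2\cdot S_i\ge 0$ for every prime component, while the sum weighted by $n_i$ is $(K_X+tL_X)^2\cdot D=0$; so each term vanishes identically in $t$. With $S_i\cdot K_X\cdot L_X=0$ in hand, the adjunction computation collapses: $L_X|_{S_i}\cdot K_{S_i}=S_i\cdot L_X\cdot(K_X+S_i)=S_i^2\cdot L_X$.

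Second, rather than expanding $(K_{S_i}+tL_X|_{S_i})^2>0$ and splitting into sub-cases, the paper intersects the ample class $K_{S_1}+rL_X|_{S_1}$ with the strictly nef (hence numerically nontrivial) class $L_X|_{S_1}$: the Hodge index theorem forces this product to be strictly positive, and by the previous paragraph it equals $S_1^2\cdot L_X$. Now one does \emph{not} sum over all components; instead one intersects the single component $S_1$ with $D\cdot L_X$:
\[
0=S_1\cdot(aL_X+bK_X)\cdot L_X=S_1\cdot D\cdot L_X=\sum_i n_i\,S_1\cdot S_i\cdot L_X\ge n_1\,S_1^2\cdot L_X>0,
\]
using only that $S_1\cdot S_j\cdot L_X\ge 0$ for $j\neq 1$ (nefness of $L_X$ on the effective $1$-cycle $S_1\cap S_j$). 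This avoids the $n_i$ versus $n_i^2$ mismatch entirely and eliminates your secondary sub-case $K_{S_i}\cdot L_X|_{S_i}=0$.
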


\begin{proof}
Suppose  the contrary. 
By Lemmas \ref{lem_strict_nef_k} and \ref{lem-big-ample},  $K_X+tL_X$ is not big (but nef) for some  $t>6m$ with $m$ the Cartier index of $L_X$. 
Then Lemma \ref{lem-not-big-some} gives us that
$$K_X^3=K_X^2\cdot L_X=K_X\cdot L_X^2=L_X^3=0.$$
Pick an effective divisor $\sum n_iF_i\equiv aL_X+bK_X$.
Then  $(K_X+tL_X)^2\cdot (aL_X+bK_X)=0$.
Since $K_X+tL_X$ is nef for all $t\gg 1$,  
we have  $F_i\cdot L_X^2=F_i\cdot K_X\cdot L_X=F_i\cdot K_X^2=0$ for each prime component $F_i$. 
By Proposition \ref{prop_Q-Goren_surface}, $K_{F_1}+rL_X|_{F_1}$ is ample for $r\gg 1$.  
Applying the Hodge index theorem, we have 
$$0<L_X|_{F_1}\cdot (K_{F_1}+rL_X|_{F_1})=F_1\cdot L_X\cdot (K_X+F_1+rL_X)=F_1^2\cdot L_X.$$
Now that $F_i$ and $F_j$ are distinct prime divisors, we deduce the following
$$0=F_1\cdot (aL_X+bK_X)\cdot L_X=F_1\cdot\sum  n_iF_i\cdot L_X\ge n_1F_1^2L_X>0,$$
a contradiction. 
So $K_X+tL_X$ is ample for  $t\gg 1$ and our proposition is thus proved.
\end{proof}


To end this section, we extend a formula on conic bundles to  our singular case.
Recall that a \textit{contraction} of $X$ is a surjective morphism with connected fibres.
A contraction $\pi:X\to S$ is said to be \textit{elementary} if the relative Picard number $\rho(X/S)=1$.
We refer readers  to Remark \ref{rem_composition_conic} for a further extension  when $\rho(X/S)\ge 2$.
\begin{lemma}(cf.~\cite[4.11]{Miy81})\label{lem_conic_miy}
Let $X$ be a normal projective threefold with at worst isolated klt singularities.
Suppose that $\pi:X\to S$ is an elementary  $K_X$-negative contraction  onto a normal projective surface $S$.
Denote by $D_1$ the one-dimensional part of the discriminant locus of $\pi$ (over which, $\pi$ is not smooth).
Then $\pi_*K_X^2\equiv -(4K_S+D_1)$.
\end{lemma}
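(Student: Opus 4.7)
The plan is to reduce the formula to Miyaoka's classical case of smooth conic bundles \cite[4.11]{Miy81} by restricting $\pi$ to the preimage of a general ample curve in $S$ that avoids the (finitely many) images of singular points of $X$. Since $\Sing(X)$ is finite by hypothesis, so is $\Sigma := \pi(\Sing(X)) \subset S$. Fixing a very ample divisor $H$ on $S$ and taking $m \gg 0$, I would, by Bertini, pick a smooth irreducible $C \in |mH|$ that avoids $\Sigma \cup \Sing(S)$, meets $D_1$ transversally, and whose preimage $Y := \pi^{-1}(C)$ is a smooth irreducible surface lying inside the smooth locus of $X$. The restricted morphism $\phi := \pi|_Y : Y \to C$ is then a flat conic bundle between smooth projective varieties, with $s := D_1 \cdot C$ singular (reducible) fibres.

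Next, by the projection formula, $\pi_* K_X^2 \cdot C = K_X^2 \cdot Y$. Adjunction on the smooth locus gives $K_X|_Y = K_Y - Y|_Y$, and since $Y = \pi^{*}C$, the restriction $Y|_Y$ is linearly equivalent to $(C^2) F$ for $F$ a general fibre of $\phi$; a short computation then yields
\[
K_X^2 \cdot Y \;=\; K_Y^2 + 4\,C^2.
\]
The standard identity $K_Y^2 = 8(1-g(C)) - s$ for smooth conic bundles over a smooth curve (verified via Noether's formula on $Y$, using $\chi(\OO_Y) = 1 - g(C)$ and $c_2(Y) = 4(1-g(C)) + s$), combined with the surface adjunction $2g(C) - 2 = (K_S + C) \cdot C$ (valid because $C$ avoids $\Sing(S)$), simplifies the right-hand side to $-(4K_S + D_1) \cdot C$.

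Finally, since ample divisor classes on $S$ span $N^{1}(S)_{\mathbb{Q}}$ and the equality $\pi_* K_X^2 \cdot C = -(4K_S + D_1) \cdot C$ has been established for $C$ a general very ample member of any such class, the asserted numerical equivalence $\pi_* K_X^2 \equiv -(4K_S + D_1)$ follows by linearity. The main obstacle is verifying that the restricted map $\phi : Y \to C$ genuinely satisfies Miyaoka's classical hypotheses --- smooth total space, smooth base curve, and a correctly interpreted discriminant --- and this is exactly where the assumption that $\Sing(X)$ be isolated is essential: it makes $\pi(\Sing(X))$ a finite set that a general hyperplane section of $S$ can avoid, so that $Y = \pi^{-1}(C)$ lands entirely in the smooth part of $X$ and inherits the standard conic-bundle geometry.
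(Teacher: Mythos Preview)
Your proposal is correct and follows essentially the same route as the paper: restrict to a general very ample curve $C\subseteq S$ avoiding $\pi(\Sing X)\cup\Sing S$, set $Y=\pi^{-1}(C)$, compute $K_X^2\cdot Y=K_Y^2+4C^2$ via adjunction, and plug in the classical formula for $K_Y^2$ on a conic bundle over a curve. The paper adds two small justifications you leave implicit: it invokes the canonical bundle formula to show $S$ has rational (hence $\mathbb{Q}$-factorial) singularities, so that $K_S$ and the asserted numerical equivalence make sense; and it notes that $\pi$ is equi-dimensional and $X$ Cohen--Macaulay, ensuring $\pi$ is flat over the complement of a finite set and hence $\pi^{-1}(C)=\pi^*C$ is a genuine smooth conic bundle over $C$.
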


\begin{proof}
Since $\rho(X/S)=1$, by the  canonical bundle formula (cf.\,e.g.~\cite[Theorem 0.2]{Amb05}), there exists some  $\Delta_S\ge 0$ on $S$ such that $(S,\Delta_S)$ is klt and thus $S$ has only rational singularities (cf.~\cite[Theorem 5.22]{KM98}).
Then, $S$ is $\mathbb{Q}$-factorial (cf.~\cite[Proposition 17.1]{Lip69}), and 
we only need to check the equality  on very ample curves  on $S$. 
By the cone theorem (cf.~\cite[Theorem 3.7]{KM98}), $\pi$ is equi-dimensional.
By \cite[Theorem 5.10]{KM98}, $X$ is Cohen-Macaulay.
Let $D_0:=(\textup{Sing}~S)\cup\pi(\textup{Sing}~X)$.
Then, $\pi|_{X\backslash\pi^{-1}(D_0)}$ is flat (cf.~\cite[Theorem 23.1 and its Corollary]{Mat89}) and thus $\pi|_{X\backslash\pi^{-1}(D_0)}$ is a usual conic bundle  (cf.~\cite[(1.5)]{Sar82}).  
Let $T$ be a very ample curve on $S$ which can be assumed to be smooth, avoid $D_0$, and intersect with $D_1$ transversally. 
Let $F:=\pi^{-1}(T)=\pi^*(T)$, which is smooth.
Then 
$$K_X^2\cdot F=(K_X|_F)^2=(K_F-F|_F)^2=K_F^2-2K_F\cdot (F|_F)=K_F^2+4T^2.$$ 
Note that the last equality is due to the adjunction.
Since $F$ is a (not necessarily minimal) ruled surface over $T$ with $D_1
\cdot T$ degenerate  (reducible) fibres, we have
$$K_F^2=-4(K_S\cdot T+T^2)-D_1\cdot T.$$ 
As a result, we have $\pi_*(K_X^2)\cdot T=K_X^2\cdot F=-(4K_S+D_1)\cdot T$.
So our lemma is proved.
\end{proof}

\section{Reduction to uniruled threefolds with $q^{\circ}(X)=0$}\label{section_reduction_q=0_uniruled}
In Sections \ref{section_reduction_q=0_uniruled} $\sim$ \ref{sec_pf_thm1.6}, we  prove Theorem \ref{main_Goren_ter_3fold}.
More precisely, we  show the following theorem which is of a  more general setting. 
For the proofs, we follow the ideas in \cite{CCP08}, though in our case, the appearance of singularities will make things more complicated.

\begin{theorem}(cf.~Theorem \ref{main_Goren_ter_3fold})\label{main_theorem_Goren_ter_3fold}
Let $X$ be a normal projective threefold with only canonical singularities, 
and $L_X$ a strictly nef divisor on $X$.
Suppose one of the following holds.
\begin{enumerate}
\item The Kodaira dimension $\kappa(X)\ge 1 $;
\item The augmented irregularity $q^{\circ}(X)>0$ and $X$ is $\mathbb{Q}$-factorial; or
\item $X$ is uniruled, and  has only isolated $\mathbb{Q}$-factorial Gorenstein canonical singularities. 
\end{enumerate}
Then $K_X+tL_X$ is ample for sufficiently large $t\gg 1$.
\end{theorem}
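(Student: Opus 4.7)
The plan is to reduce each of the three hypotheses to a common endgame: produce a non-zero effective divisor numerically equivalent to $aL_X+bK_X$ for some non-negative rationals $a,b$, and then invoke Proposition~\ref{prop-q-effective} (after first passing to a small $\Q$-factorialization of $X$ if necessary, so that the dlt hypothesis is satisfied; canonical singularities remain klt under such a modification, and effective classes lift through small birational maps since they have no exceptional divisors). Strict nefness of $L_X$ together with Lemmas~\ref{lem_strict_nef_k} and \ref{lem-big-ample} then upgrades the effective combination into ampleness of $K_X+tL_X$ for $t\gg 1$. Case~(1) is essentially immediate under this framework: $\kappa(X)\ge 1$ produces a non-zero member of some $|mK_X|$, directly yielding the required effective combination with $(a,b)=(0,m)$.

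For Case~(2), the plan is to pass to a quasi-\'etale cover $\nu:\widetilde X\to X$ realizing $q(\widetilde X)=q^{\circ}(X)>0$. Then $\widetilde X$ is still canonical and $\Q$-factorial, the pullback $L_{\widetilde X}:=\nu^{*}L_X$ remains strictly nef (as $\nu$ is finite), and the Albanese morphism $\alpha:\widetilde X\to A$ has positive-dimensional image. I would split the analysis according to whether $\alpha$ is generically finite onto its image or a genuine fibration. In the generically finite case, either $\kappa(\widetilde X)\ge 1$, reducing to Case~(1) on $\widetilde X$, or else $K_{\widetilde X}$ is at most numerically trivial and the pullback of an ample class from $A$ supplies the effective combination. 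In the fibration case, adjunction along a general fibre combined with the strict nefness of $L_{\widetilde X}$ produces an effective divisor $\equiv aL_{\widetilde X}+bK_{\widetilde X}$ on $\widetilde X$. Either way, $\nu_{*}$ descends the effective class to $X$.

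Case~(3) is the substantive part. Since $X$ is uniruled, $K_X$ is not pseudo-effective, so a $K_X$-MMP terminates in a Mori fibre space $\pi:X'\to S'$; crucially, the isolated $\Q$-factorial Gorenstein canonical hypothesis ensures that this MMP involves only divisorial contractions (no flips) and preserves the singularity class throughout. I would then split according to $\dim S'\in\{0,1,2\}$. When $\dim S'=0$, $X'$ has Picard number one, so $L_{X'}$ is numerically proportional to $-K_{X'}$ and any pluri-anti-canonical section provides the effective combination. When $\dim S'=1$, a del Pezzo fibration, adjunction on a general fibre plus the pushforward $\pi_{*}L_{X'}$ on the curve $S'$ builds the combination. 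When $\dim S'=2$, a conic bundle, Lemma~\ref{lem_conic_miy} supplies $\pi_{*}K_{X'}^{2}\equiv -(4K_{S'}+D_1)$; combining this with the nef pushforwards $\pi_{*}(L_{X'}^{2})$ and $\pi_{*}(L_{X'}\cdot K_{X'})$ on the $\Q$-factorial klt base $S'$, together with the intersection vanishings of Lemma~\ref{lem-not-big-some} and the Hodge index theorem on $S'$, forces either a direct contradiction or the desired effective combination. The effective class on $X'$ then pulls back to $X$ via the strict transform through the divisorial steps.

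The principal obstacle I expect is the conic bundle sub-case of Case~(3): the base surface $S'$ is only $\Q$-factorial klt via the canonical bundle formula, so the classical surface-theoretic arguments that convert Lemma~\ref{lem_conic_miy}'s identity into an effective combination need their singular counterparts (in particular a version of Proposition~\ref{prop_Q-Goren_surface} applied on $S'$). In parallel, one must verify at each step that the isolated Gorenstein canonical $\Q$-factorial property survives the divisorial contractions of the MMP, and keep careful track of how effective combinations transport between $X$ and each intermediate model.
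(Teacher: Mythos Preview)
Your overall framework---produce an effective divisor numerically equivalent to $aL_X+bK_X$ and invoke Proposition~\ref{prop-q-effective}---matches the paper's endgame, but each case has a genuine gap. In Case~(1), a small $\Q$-factorialization $\pi:X'\to X$ has exceptional \emph{curves} (even though it has no exceptional divisors), and $\pi^*L_X$ vanishes on them; so $L_{X'}$ is not strictly nef and Proposition~\ref{prop-q-effective} does not apply on $X'$. The paper sidesteps this entirely: Theorem~\ref{kappa>=1} uses the Iitaka fibration and \cite[Theorem~20]{Cha20} on the general fibre to show $K_X+tL_X$ is \emph{big}, then invokes Lemma~\ref{lem-big-ample}, which needs only klt. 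In Case~(2), your dichotomy on the Albanese map does not actually manufacture a class of the form $aL_{\widetilde X}+bK_{\widetilde X}$: the pullback of an ample divisor from $A$ is effective but bears no a~priori numerical relation to $L_{\widetilde X}$ or $K_{\widetilde X}$, and ``adjunction along a general fibre'' produces information on $F$, not an effective divisor on $\widetilde X$. The paper's argument here (Theorem~\ref{thm_alb}) is substantially deeper: it shows $\pi_*\mathcal{O}_X(2m(K_X+tL_X))$ satisfies an $IT^0$ condition after twisting, then uses Hacon's generic vanishing \cite{Hac04} and the Fourier--Mukai transform \cite{Muk81} to force $V^0\neq\emptyset$, yielding an effective member of $|D_t+\pi^*P|$ for some $P\in\textup{Pic}^0(A)$.

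Case~(3) is where your plan diverges most seriously. Running the full MMP $X\to X'$ to a Mori fibre space and analysing $L_{X'}$ there does not work: the pushforward $L_{X'}$ need not be strictly nef (indeed Lemma~\ref{lem_L.C>0} shows $L_{X'}\cdot C'\le 0$ is the expected situation on the image curve of a divisorial contraction), and even if you find an effective $E'\equiv a'L_{X'}+b'K_{X'}$, pulling back through $K_X=\varphi^*K_{X'}+E$ and $L_X=\varphi^*L_{X'}-\nu E$ gives $\varphi^*E'\equiv a'L_X+b'K_X+(a'\nu-b')E$, which is not of the required form. The paper therefore does \emph{not} push $L_X$ forward; instead it works with the nef divisor $D_X:=L_X+\nu K_X$ (Notation~\ref{not_bir_surface_curve}), which satisfies $D_X=\varphi^*D_{X'}$ exactly, and runs an induction on $\rho(X)$. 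The substantive content is Section~\ref{sec_pf_thm1.6}: after the first divisorial step one must rule out $D_{X'}\cdot C'=0$ via a detailed geometric analysis of the ruled surface $E$ (Lemma~\ref{lem_D.C>0}), and then show that the next extremal contraction on $X'$ is again divisorial with exceptional locus disjoint from $C'$ (Lemmas~\ref{lem-sec-con-surface}, \ref{lem_bir-intersectC}), the latter requiring a delicate case analysis via the terminalization of Lemma~\ref{lem_canonical_terminal} and even a projectivity criterion for Moishezon threefolds. The obstacle you flag---the conic-bundle base---is comparatively mild and is dispatched by Proposition~\ref{prop_contr_surface_elementary} and Theorem~\ref{thm_contr_surface_conic}; the real difficulty is the chain of divisorial contractions to curves, which your proposal does not address.
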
 

Theorem \ref{main_theorem_Goren_ter_3fold} is obtained as the union of Theorem \ref{kappa>=1}, Corollary \ref{coro_q0>0}, Remark \ref{remark_conclude_excep_tocurve} and Theorem \ref{thm_3fold_surface_curve}.
In this section, we shall prove Theorem \ref{main_theorem_Goren_ter_3fold} (1) and (2).
\begin{theorem}(cf.~Theorem \ref{main_theorem_Goren_ter_3fold} (1))\label{kappa>=1}
Let $(X,\Delta)$ be a projective canonical pair, and $L_X$ a strictly nef $\mathbb{Q}$-divisor on $X$.
Suppose that the Iitaka dimension $\kappa(a(K_X+\Delta)+bL_X)\ge \dim X-2$ for some $a,b\ge 0$.
Then $(K_X+\Delta)+tL_X$ is ample for sufficiently large $t$.
\end{theorem}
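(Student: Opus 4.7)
My plan is to convert the Iitaka hypothesis into an effective divisor representing the class $D:=a(K_X+\Delta)+bL_X$ and then run the Hodge-index contradiction of Proposition~\ref{prop-q-effective} on its components.

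If $D$ is big, Lemma~\ref{lem-big-ample} already produces the ampleness of $(K_X+\Delta)+tL_X$ for $t\gg 1$, so I may assume $\kappa(D)<\dim X$. Since $\kappa(D)\ge 0$, some $m\ge 1$ yields $mD\sim E=\sum n_iF_i$ with $E$ effective. Passing to a small $\mathbb{Q}$-factorialization of the canonical (hence dlt) pair $(X,\Delta)$ puts us into the hypotheses of Propositions~\ref{prop_Q-Goren_surface} and~\ref{prop-q-effective}, while preserving strict nefness of $L_X$ and the numerical class of $D$. I argue by contradiction: if $(K_X+\Delta)+tL_X$ is never ample for any $t\gg 1$, Lemmas~\ref{lem_strict_nef_k} and~\ref{lem-big-ample} force it to be strictly nef but not big, and Lemma~\ref{lem-not-big-some} then yields the vanishings $(K_X+\Delta)^i\cdot L_X^{3-i}=0$ for $0\le i\le 3$.

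Intersecting $(K_X+\Delta+tL_X)^2$ with $E\equiv mD$ and using these vanishings produces $F_i\cdot L_X^2=F_i\cdot(K_X+\Delta)\cdot L_X=F_i\cdot(K_X+\Delta)^2=0$ for every prime component $F_i\subseteq\Supp E$. Fixing a component $F_1$, Proposition~\ref{prop_Q-Goren_surface} ensures the ampleness of $K_{F_1}+rL_X|_{F_1}$ for $r\gg 1$, and the Hodge index theorem on the surface $F_1$ gives
\[
0 \;<\; L_X|_{F_1}\cdot\bigl(K_{F_1}+rL_X|_{F_1}\bigr) \;=\; F_1\cdot L_X\cdot(K_X+F_1) \;=\; F_1^2\cdot L_X - F_1\cdot L_X\cdot\Delta,
\]
where the last equality uses $F_1\cdot L_X\cdot(K_X+\Delta)=0$ and $F_1\cdot L_X^2=0$. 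Since $\Delta\ge 0$ and $L_X$ is nef, $F_1\cdot L_X\cdot\Delta\ge 0$, whence $F_1^2\cdot L_X>0$. On the other hand $F_1\cdot L_X\cdot E=mF_1\cdot L_X\cdot D=0$ and $F_1\cdot F_i\cdot L_X\ge 0$ for $i\ne 1$, forcing $F_1^2\cdot L_X\le 0$, the desired contradiction.

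The main obstacle I foresee is the bookkeeping with $\Delta$ in the Hodge computation: Proposition~\ref{prop_Q-Goren_surface} is phrased via the $K_X$-adjunction $K_{F_1}=(K_X+F_1)|_{F_1}$, so the extra term $F_1\cdot L_X\cdot\Delta$ is introduced and one must verify it has the correct sign, which fortunately follows from effectivity of $\Delta$ and nefness of $L_X$. A secondary technical point is to confirm that the small $\mathbb{Q}$-factorialization preserves $\kappa(D)$ and strict nefness of $L_X$; both descend routinely through a small birational map, but writing this out carefully is necessary to rigorously reduce to the setting where Propositions~\ref{prop_Q-Goren_surface} and~\ref{prop-q-effective} apply verbatim.
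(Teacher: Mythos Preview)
Your approach has a scope mismatch with the statement. Theorem~\ref{kappa>=1} is formulated for a projective canonical pair $(X,\Delta)$ of \emph{arbitrary} dimension, and the paper's proof works in that generality: it passes to the Iitaka fibration of $D=a(K_X+\Delta)+bL_X$, restricts the pullback of $L_X$ to a general fibre $F$ (which has dimension $\le 2$ regardless of $\dim X$), uses \cite[Theorem~20]{Cha20} on almost strictly nef divisors to obtain bigness of $(K_{\widetilde X}+\widetilde\Delta+tL_{\widetilde X})|_F$, and then combines this with an ample pullback from the base to force $K_X+\Delta+tL_X$ big, hence ample by Lemma~\ref{lem-big-ample}. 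Your argument, by contrast, invokes Lemma~\ref{lem-not-big-some}, Proposition~\ref{prop_Q-Goren_surface}, and the triple-intersection vanishings $(K_X+\Delta)^i\cdot L_X^{3-i}=0$, all of which are stated and valid only when $\dim X=3$. So as written your proof addresses at most the threefold case and leaves the general statement open.

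There is also a genuine error in your reduction step, even in dimension three. You claim that strict nefness of $L_X$ ``descends routinely through a small birational map''; it does not. If $\pi:X'\to X$ is a small $\mathbb{Q}$-factorialization and $C'\subset X'$ is any $\pi$-exceptional curve, then $\pi^*L_X\cdot C'=0$, so $\pi^*L_X$ is only almost strictly nef on $X'$. This breaks the hypothesis needed to apply Proposition~\ref{prop_Q-Goren_surface} to a component $F_1$ on $X'$ (you need $L_{X'}|_{F_1}$ strictly nef), and you cannot instead run the Hodge-index computation on $X$ itself because the adjunction $K_{F_1}=(K_X+F_1)|_{F_1}$ underlying Proposition~\ref{prop_Q-Goren_surface} requires $F_1$ to be $\mathbb{Q}$-Cartier. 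The paper avoids this entirely by establishing bigness on a smooth resolution and pushing forward, rather than by a Hodge-index contradiction on $X$; if you want to salvage your route in the threefold case you would need either an a~priori $\mathbb{Q}$-factoriality assumption on $X$ or a separate argument controlling the $\pi$-exceptional locus relative to $\Supp E$.
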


\begin{proof}
If $\dim X\le 2$, then our theorem follows from Proposition  \ref{main_thm_surface}.
So we may assume $\dim X-2\ge 1$.
By Lemma \ref{lem_strict_nef_k}, $K_X+\Delta+tL_X$ is strictly nef for $t\gg 1$. 
With $b$ replaced by $b/a$ (if $a\neq 0$), we may further assume  $0\le a\le 1$.
Let $\pi:X\dashrightarrow Y$ be the Iitaka fibration (with $\dim Y\ge 1$).
Resolving the indeterminacy of $\pi$ and the singularities of $X$, we get the induced morphisms $\pi_1:\widetilde{X}\to X$ and $\pi_2:\widetilde{X}\to Y$ such that  $\widetilde{X}$ is smooth and
\begin{equation}\label{equ_1}
	K_{\widetilde{X}}+\widetilde{\Delta}=\pi_1^*(K_X+\Delta)+\sum a_iE_i
\end{equation}
with $\widetilde{\Delta}:=(\pi_1)_*^{-1}\Delta$ and $a_i\ge 0$, noting that $(X,\Delta)$ is a projective canonical pair.
Moreover, there exists an ample $\mathbb{Q}$-divisor $H$ on $Y$ such that
\begin{equation}\label{equ_2}
\pi_1^*(a(K_X+\Delta)+bL_X)\sim_{\mathbb{Q}}\pi_2^*H+E_0.	
\end{equation}
Here, some multiple $nE_0$ is the fixed component of $|n\pi_1^*(a(K_X+\Delta)+bL_X)|$ (which defines $\pi_2$) and hence $E_0\ge 0$. 
Let $L_{\widetilde{X}}:=\pi_1^*L_X$, being almost strictly nef (cf.~Section \ref{section2}) on $\widetilde{X}$.
Then for a general fibre $F$ of $\pi_2$, the restriction $L_{\widetilde{X}}|_F$ is almost strictly nef.

If $\kappa(a(K_X+\Delta)+bL_X)=\dim X$, then our theorem follows from Lemma \ref{lem-big-ample}.
If $\kappa(a(K_X+\Delta)+bL_X)=\dim X-1$, then  $L_{\widetilde{X}}|_F$ is clearly ample.
If $\kappa(a(K_X+\Delta)+bL_X)=\dim X-2$, then $\dim F=2$; in this case, it follows from \cite[Theorem 20]{Cha20} that $K_F+tL_{\widetilde{X}}|_F$ is big for $t\gg 1$ and hence $(K_{\widetilde{X}}+\widetilde{\Delta}+tL_{\widetilde{X}})|_{F}$ is big for $t\gg 1$.
Let us fix the ample $\mathbb{Q}$-divisor $H$ on $Y$ in Equation (\ref{equ_2}).

In any event, 
$K_{\widetilde{X}}+\widetilde{\Delta}+tL_{\widetilde{X}}+\pi_2^*H$
is big for $t\gg 1$: 
 fixing one $t$ such that $(K_{\widetilde{X}}+\widetilde{\Delta}+tL_{\widetilde{X}})|_{F}$ is big, we see that  $A_k:=(K_{\widetilde{X}}+\widetilde{\Delta}+tL_{\widetilde{X}})+k\pi_2^*H$ is big for $k\gg 1$ (cf.~e.g. \cite[Lemma 3.23]{KM98}).
Since $K_{\widetilde{X}}+\widetilde{\Delta}+tL_{\widetilde{X}}=\pi_1^*(K_X+\Delta+tL_X)+\sum a_iE_i$ is   pseudo-effective, our $A_1=K_{\widetilde{X}}+\widetilde{\Delta}+tL_{\widetilde{X}}+\pi_2^*H=\frac{1}{k}(A_k+(k-1)(K_{\widetilde{X}}+\widetilde{\Delta}+tL_{\widetilde{X}}))$ is big.

Let $N:=(1-a)(K_{\widetilde{X}}+\widetilde{\Delta})+(t-b)L_{\widetilde{X}}$, which is pseudo-effective for $t\gg 1$ (cf.~Lemma \ref{lem_strict_nef_k} and Equation (\ref{equ_1})).
Following from Equations (\ref{equ_1}) and (\ref{equ_2}), we have
\begin{align*}
2(K_{\widetilde{X}}+\widetilde{\Delta}+tL_{\widetilde{X}})&=A_1+(a(K_{\widetilde{X}}+\widetilde{\Delta})+bL_{\widetilde{X}}-\pi_2^*H)+N\\
&=
A_1+\pi_1^*(a(K_X+\Delta)+bL_X)+\sum aa_iE_i-\pi_2^*H+N\\
&\sim_{\mathbb{Q}} A_1+E_0+\sum aa_iE_i+N(\ge A_1+N),
\end{align*}
which is big for $t\gg 1$.
Thus, the push-forward $K_X+\Delta+tL_X$ is also big for $t\gg1$ (cf.~\cite[Lemma 4.10]{FKL16}).
So our theorem follows from Lemma \ref{lem-big-ample}.
\end{proof}

With  the same proof of \cite[Theorem 3.1]{CCP08}  after replacing \cite[Theorem 0.3, Lemma 1.5, Propositions 1.7, 1.8]{CCP08} by 
Theorem \ref{thm_canonical_k}, Lemma \ref{lem-big-ample} and Propositions \ref{prop_Q-Goren_surface}, \ref{prop-q-effective}, respectively, we  get the theorem below (cf.~\cite[Theorem 8.1]{LP20B}).  
For readers' convenience, we  include a detailed proof here. 
\begin{theorem}(cf.~\cite[Theorem 3.1]{CCP08})\label{thm_alb}
Let $X$ be a  normal projective threefold with only canonical singularities, and $L_X$ a strictly nef $\mathbb{Q}$-divisor on $X$.
Suppose there exists a non-constant morphism $\pi:X\to A$ to an abelian variety (this is the case when $q(X)>0$).
Then $K_X+tL_X$ is numerically equivalent to a non-zero effective divisor for  some $t\gg 1$. 
If $X$ is further assumed to be $\mathbb{Q}$-factorial, then $K_X+tL_X$ is ample for $t\gg 1$.
\end{theorem}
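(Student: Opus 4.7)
The plan is to transplant the proof of \cite[Theorem 3.1]{CCP08} to the singular setting, substituting for each cited tool its analogue we have prepared above: \cite[Theorem 0.3]{CCP08} is replaced by Theorem \ref{thm_canonical_k}, \cite[Lemma 1.5]{CCP08} by Lemma \ref{lem-big-ample}, \cite[Proposition 1.7]{CCP08} by Proposition \ref{prop_Q-Goren_surface}, and \cite[Proposition 1.8]{CCP08} by Proposition \ref{prop-q-effective}. Since the hypotheses of each replacement have been arranged to hold under the canonical (resp. $\mathbb{Q}$-factorial) conditions of the present theorem, the overall architecture of the CCP08 argument should transfer with only notational modifications.

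Concretely, I would first apply Stein factorization to $\pi$, writing $\pi=g\circ f$ with $f:X\to Y$ a contraction with connected fibres and $g:Y\to A$ finite, and set $W:=g(Y)\subseteq A$ and $d:=\dim W\in\{1,2,3\}$. In each case the target is to exhibit a non-zero effective $\mathbb{Q}$-divisor numerically equivalent to some positive combination $aL_X+bK_X$; once this is achieved, the second (ampleness) assertion follows at once from Proposition \ref{prop-q-effective} under the $\mathbb{Q}$-factoriality hypothesis. For $d=2$ the general fibre $F$ of $f$ is a curve, so the strictly nef $L_X|_F$ automatically has positive degree and is ample. For $d=1$ the general fibre $F$ is a surface, and Proposition \ref{prop_Q-Goren_surface} (applied with $S=F$) delivers ampleness, hence bigness, of $K_F+tL_X|_F$ for $t\gg1$; combining this with the pullback of an ample (and hence effective) divisor $H$ from $A$ yields bigness of $K_X+tL_X+n\pi^{*}H$ for $t,n\gg1$, from which the desired effective representative of $K_X+tL_X$ is extracted by subtracting $n\pi^{*}H$, using that every ample class on the abelian variety $A$ is represented by an effective divisor.

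The main obstacle I expect is the case $d=3$, where $f$ is birational onto $Y\subseteq A$. Because $K_A\equiv 0$, no positivity propagates from the base abelian variety, so the effectivity has to be extracted solely from the generic finiteness of $\pi$ together with the strict nefness of $L_X$. Following the CCP08 strategy, I would combine Lemma \ref{lem-not-big-some} (which pins down the exceptional numerical regime in which $K_X+tL_X$ could fail to be big) with an analysis of $Y$ as a subvariety of an abelian variety: either $Y$ is of general type, in which case a Kawamata-Ueno-type argument forces $\kappa(X)\ge 1$ and Theorem \ref{kappa>=1} applies directly, or $Y$ contains a translate of a positive-dimensional abelian subvariety of $A$, which reduces the problem to lower-dimensional instances of the statement through the associated fibration. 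In either subcase one produces an effective $\mathbb{Q}$-divisor numerically equivalent to $aL_X+bK_X$, and the first assertion follows; the second assertion then follows from Proposition \ref{prop-q-effective} under the additional $\mathbb{Q}$-factoriality hypothesis.
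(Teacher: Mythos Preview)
Your outline is not the CCP08 argument, and the route you sketch has a genuine gap.

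The paper (following CCP08) does not Stein-factorize $\pi$ or split into cases on $\dim\pi(X)$. Instead it sets $D_t:=2m(K_X+tL_X)$ and $\mathcal{F}_t:=\pi_*\mathcal{O}_X(D_t)$, uses Proposition~\ref{prop_Q-Goren_surface} only to ensure $D_t|_F$ is ample on a general fibre so that $D_t$ is $\pi$-big (hence, after relative base-point-freeness, $\mathcal{F}_t\neq 0$), and then proves the vanishing $H^i(A,\mathcal{F}_t\otimes M^\vee)=0$ for all $i\ge 1$ and every ample $\hat M$ on $\hat A$, via Kawamata--Viehweg on $X$ pushed through the isogeny $\phi_{\hat M}$. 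Hacon's derived-category generic vanishing then forces $V^0(\mathcal{F}_t)\neq\emptyset$, i.e.\ $h^0(X,D_t+\pi^*P)\neq 0$ for some $P\in\operatorname{Pic}^0(A)$. Since $P\equiv 0$, one gets $D_t\equiv D_t+\pi^*P\sim E\ge 0$, and $E\neq 0$ is ruled out by Theorem~\ref{thm_canonical_k}. The whole point is the twist by a \emph{numerically trivial} class $\pi^*P$: that is what turns a nonvanishing $H^0$ into an effectivity statement for the numerical class of $K_X+tL_X$ itself.

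Your proposal misses exactly this mechanism. In your $d\le 2$ cases you correctly reach bigness of $K_X+tL_X+n\pi^*H$ for ample $H$ on $A$, but ``subtract $n\pi^*H$'' does not yield an effective representative of $K_X+tL_X$: $\pi^*H$ is \emph{not} numerically trivial, so an effective member of $|m(K_X+tL_X+n\pi^*H)|$ minus $mn\,\pi^*H$ has no reason to be effective, and in any case is not of the form $aL_X+bK_X$ required by Proposition~\ref{prop-q-effective}. Nor can you invoke Theorem~\ref{kappa>=1} here, since its hypothesis is an Iitaka-dimension bound on $a(K_X+\Delta)+bL_X$, not on $K_X+tL_X+n\pi^*H$. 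Your $d=3$ sketch has the same defect: Ueno's fibration may reduce the dimension of the base, but you still need a device that produces an effective divisor in the numerical class of $K_X+tL_X$, and none is supplied. The missing ingredient in every case is precisely the $\operatorname{Pic}^0$-twist coming from generic vanishing on the abelian variety.
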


\begin{proof}
Replacing $L_X$ by a multiple, we may assume that $L_X$ is Cartier.
Let $m$ be the Cartier index of $K_X$, $D_t:=2m(K_X+tL_X)$ and $\mathcal{F}_t:=\pi_*\mathcal{O}_X(D_t)$. 
Fix one $t>6m$ such that $D_t$ is strictly nef and  $D_t|_{F}$ is ample for a general fibre $F$ of $\pi$ (cf.~Proposition \ref{prop_Q-Goren_surface} and \cite[Theorem 1.37]{KM98}). 
Then it is easy to verify that $D_t$ is $\pi$-big  (cf.~\cite[Definition 3.22]{KM98}). 
Since $D_t-K_X=(2m-1)(K_X+tL_X)+tL_X$ is  $\pi$-big and $\pi$-nef, by the relative base-point-free theorem (cf.~\cite[Theorem 3.24]{KM98}), $nD_t$ is $\pi$-free  for all $n\gg 1$.
Hence, with $m$ replaced by a multiple, we may further assume that $D_t$ is $\pi$-free (i.e., $\pi^*\mathcal{F}_t\to \mathcal{O}_X(D_t)$ is surjective), and thus $\mathcal{F}_t$ is a non-zero sheaf.

Let $\hat{A}=\textup{Pic}^0(A)$ be the dual abelian variety and $\mathcal{P}$  the normalized Poincar\'e line bundle (in the sense that both $\mathcal{P}|_{A\times\{\hat{0}\}}$ and $\mathcal{P}|_{\{0\}\times\hat{A}}$ are trivial).
Denote by $p_A$ and $p_{\hat{A}}$ the projections of $A\times\hat{A}$ onto $A$ and $\hat{A}$.
For any ample line bundle $\hat{M}$ on $\hat{A}$, we define the isogeny $\phi_{\hat{M}}:\hat{A}\to A$  by $\phi_{\hat{M}}(\hat{a})=t_{\hat{a}}^*\hat{M}^{\vee}\otimes\hat{M}$, and
let $M:=(p_{A})_*(p_{\hat{A}}^*\hat{M}\otimes\mathcal{P})$ be the vector bundle on $A$.
Recall that $\phi_{\hat{M}}^*(M^{\vee})\cong\oplus_{h^0(\hat{M})}\hat{M}$ (cf.~\cite[Proposition 3.11 (1)]{Muk81}). 
\begin{claim}\label{claim_fm}
$H^i(A,\mathcal{F}_t\otimes M^{\vee})=0$ for any  $t\gg1$ and any  ample line bundle $\hat{M}$ on $\hat{A}$. 
\end{claim}
Suppose the claim for the time being.
Then it follows from \cite[Theorem 1.2 and Corollary 3.2]{Hac04} that
we have a chain of inclusions
$V^0(\mathcal{F}_t)\supseteq V^1(\mathcal{F}_t)\supseteq\cdots\supseteq V^n(\mathcal{F}_t)$, 
where $V^i(\mathcal{F}_t):=\{P\in\textup{Pic}^0(A)~|~h^i(A,\mathcal{F}_t\otimes P)\neq 0\}$.
Recall that $\mathcal{F}_t$ is a  non-zero sheaf by the choice of our $m$. 
If  $V^i(\mathcal{F}_t)=\emptyset$ for all $i$, then the Fourier-Mukai transform of $\mathcal{F}_t$ is zero, noting that $\mathcal{P}|_{A\times\{P\}}\cong P$ and 
$R^i(p_{\hat{A}})_*(p_A^*\mathcal{F}_t\otimes\mathcal{P})_P=H^i(A,\mathcal{F}_t\otimes P)$ 
for any $P\in\textup{Pic}^0(X)$.  
By \cite[Theorem 2.2]{Muk81},  $\mathcal{F}_t$ is a zero sheaf, which is absurd. 
So $V^0(\mathcal{F}_t)\neq\emptyset$ and thus
$h^0(X,D_t+\pi^*P)\neq 0$ for some $P\in\textup{Pic}^0(A)$.
If $D_t+\pi^*P\sim 0$, then $-K_X$ is strictly nef and hence $X$ is Fano by Theorem \ref{thm_canonical_k}; thus  $q(X)=0$, contradicting the existence of $\pi$. 
So  $D_t\equiv D_t+\pi^*P\sim E$ for some non-zero effective divisor $E$, 
and the first part of our theorem is proved.
The second part of our theorem follows from Proposition \ref{prop-q-effective}. 
\par \vskip 1pc \noindent
\textbf{Proof of Claim \ref{claim_fm}.}
Fix an ample divisor $H$ on $A$. 
Since $D_{t}$ is  $\pi$-big, our $D_t+\pi^*H$ is big, noting that $D_t$ is not only $\pi$-big, but also nef (and hence pseudo-effective) (cf.~\cite[Lemma 3.23]{KM98}).
Therefore, $D_t+\pi^*(H+P)$ is nef and big for any $P\in\textup{Pic}^0(A)$. 
Since $X$ has at worst canonical singularities and 
$$D_t+\pi^*(H+P)=K_X+\frac{2m-1}{2m}(D_t+\pi^*(H+P))+\frac{1}{2m}\pi^*(H+P)+tL_X=:K_X+\widetilde{D}$$
with $\widetilde{D}$ being a nef and big $\mathbb{Q}$-Cartier Weil divisor, it follows from  Kawamata-Viehweg Vanishing Theorem (cf. e.g. \cite[Theorem 2.70]{KM98}) that 
$$H^j(A,\mathcal{F}_t\otimes H\otimes P)=H^j(X,D_t+\pi^*H+\pi^*P)=0$$
for all $j\ge1$ (where $\mathcal{F}_t=\pi_*D_t$). 
The first equality is due to $R^j\pi_*(D_t+\pi^*H+\pi^*P)=0$ for all $j\ge1$ (cf.~\cite[Remark 1-2-6]{KMM87}).
With the same terminology as in \cite[Proof of Theorem 3.1]{CCP08},  the sheaf $\mathcal{F}_t\otimes H$ is said to be $IT^0$ for all ample line bundles $H$.

Let $\hat{M}$ be any ample line bundle on $\hat{A}$, and $\phi_{\hat{M}}:\hat{A}\to A$  the isogeny defined  in the beginning of the proof. 
Let $\hat{\pi}:\hat{X}:=X\times_A\hat{A}\to \hat{A}$ be the base change with the induced map $\varphi:\hat{X}\to X$ being \'etale.
Then we have $K_{\hat{X}}=\varphi^*K_X$ and $L_{\hat{X}}:=\varphi^*L_X$ is strictly nef on $\hat{X}$.
Let $\mathcal{G}_t:=\hat{\pi}_*\varphi^*D_t$.
With the same argument as above, we see that $\mathcal{G}_t\otimes\hat{M}$ is $IT^0$ for any ample line bundle $\hat{M}$ on $\hat{A}$.
Since $\phi_{\hat{M}}$ is flat and $\pi$ is projective, we have
\begin{align*}
&~\phi_{\hat{M}}^*(\mathcal{F}_t\otimes M^{\vee})=\phi_{\hat{M}}^*(\pi_*D_t\otimes M^{\vee})=\phi_{\hat{M}}^*\pi_*(D_t\otimes \pi^*M^{\vee})=\hat{\pi}_*\varphi^*(D_t\otimes \pi^*M^{\vee})\\
&=\hat{\pi}_*(\varphi^*D_t\otimes\hat{\pi}^*\phi_{\hat{M}}^*M^{\vee})=\hat{\pi}_*(\varphi^*D_t\otimes\hat{\pi}^*(\oplus \hat{M}))=\oplus(\hat{\pi}_*\varphi^*D_t\otimes\hat{M})=\oplus(\mathcal{G}_t\otimes\hat{M}).
\end{align*}
Therefore, $\phi_{\hat{M}}^*(\mathcal{F}_t\otimes M^{\vee})$ is $IT^0$, i.e., for any $j\ge 1$ and $\hat{P}\in\textup{Pic}^0(\hat{A})=A$, 
$$H^j(\hat{A},\phi_{\hat{M}}^*(\mathcal{F}_t\otimes M^{\vee})\otimes\hat{P})=0.$$
Since $\phi_{\hat{M}}^*$ is finite, taking $\hat{P}:=\mathcal{O}_{\hat{A}}$, we see $H^j(A,\mathcal{F}_t\otimes M^{\vee}\otimes (\phi_{\hat{M}})_*\mathcal{O}_{\hat{A}})=0$.
Now that $\mathcal{O}_A$ is a direct summand of $(\phi_{\hat{M}})_*\mathcal{O}_{\hat{A}}$ (cf.~\cite[Proposition 5.7]{KM98}),  our claim is proved.
\end{proof}

We note that when $K_X$ is pseudo-effective, \cite[Theorem 8.1]{LP20B} shows a stronger version of Theorem \ref{thm_alb}. 
Let us end up this section with the following corollary. 
\begin{corollary}(cf.~Theorem \ref{main_theorem_Goren_ter_3fold} (2))\label{coro_q0>0}
Let $X$ be a $\mathbb{Q}$-factorial normal projective threefold with at worst canonical singularities, and $L_X$ a strictly nef $\mathbb{Q}$-divisor on $X$.
Suppose that the augmented irregularity $q^\circ(X)>0$.
Then $K_X+tL_X$ is ample for sufficiently large $t$.
\end{corollary}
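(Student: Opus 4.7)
The plan is to reduce to Theorem \ref{thm_alb} via a quasi-\'etale cover and then descend the resulting effective divisor back to $X$, applying Proposition \ref{prop-q-effective} to upgrade numerical effectivity to ampleness.

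First I would unpack the hypothesis $q^\circ(X) > 0$: by definition there exists a finite quasi-\'etale cover $\varphi : \widetilde{X} \to X$ with $q(\widetilde{X}) > 0$, which in particular gives a non-constant Albanese morphism $\widetilde{X} \to \Alb(\widetilde{X})$. Since $\varphi$ is \'etale in codimension one, its ramification divisor vanishes as a Weil divisor, so $K_{\widetilde{X}} = \varphi^* K_X$, and a standard discrepancy comparison then shows that $\widetilde{X}$ still has only canonical singularities. Setting $L_{\widetilde{X}} := \varphi^* L_X$, the divisor $L_{\widetilde{X}}$ remains strictly nef: for any integral curve $C \subset \widetilde{X}$, finiteness of $\varphi$ gives $L_{\widetilde{X}} \cdot C = L_X \cdot \varphi_* C > 0$.

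Applying Theorem \ref{thm_alb} to $(\widetilde{X}, L_{\widetilde{X}})$ now produces, for some $t \gg 1$, a non-zero effective divisor $E$ on $\widetilde{X}$ with $K_{\widetilde{X}} + t L_{\widetilde{X}} \equiv E$. Pushing forward through $\varphi$ and using the projection formula together with $K_{\widetilde{X}} = \varphi^* K_X$, I obtain
\[
(\deg \varphi)\,(K_X + t L_X) \;=\; \varphi_*\varphi^*(K_X + t L_X) \;\equiv\; \varphi_* E,
\]
where $\varphi_* E$ is a non-zero effective $\mathbb{Q}$-divisor since $\varphi$ is finite and surjective. Hence $K_X + t L_X$ is itself numerically equivalent to a non-zero effective $\mathbb{Q}$-divisor on $X$.

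Finally, since $X$ is $\mathbb{Q}$-factorial with only canonical singularities, the pair $(X, 0)$ is a $\mathbb{Q}$-factorial dlt threefold pair, and Proposition \ref{prop-q-effective} (applied with $a = t$, $b = 1$) yields that $K_X + tL_X$ is ample for $t \gg 1$. The argument is essentially a short descent; the only points that deserve care are the equality $K_{\widetilde{X}} = \varphi^* K_X$ and the preservation of canonical singularities under $\varphi$, both standard consequences of $\varphi$ being \'etale in codimension one. Descending the effective divisor before invoking Proposition \ref{prop-q-effective} also lets one sidestep the question of whether $\widetilde{X}$ is $\mathbb{Q}$-factorial, which would otherwise be needed to apply the ampleness portion of Theorem \ref{thm_alb} directly on $\widetilde{X}$.
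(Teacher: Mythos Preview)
Your argument is correct and follows the same route as the paper: pass to a quasi-\'etale cover with positive irregularity, apply Theorem \ref{thm_alb} there, push the resulting effective divisor back down, and finish with Proposition \ref{prop-q-effective}. The only point where the paper is slightly more explicit is the descent step: rather than invoking ``the projection formula'' for the implication $\varphi^*(K_X+tL_X)\equiv E \Rightarrow (\deg\varphi)(K_X+tL_X)\equiv \varphi_*E$, the paper phrases this as $K_X+tL_X$ being \emph{weakly} numerically equivalent to $\varphi_*E$ (in the sense of \cite[Definition 2.2]{MZ18}) and then uses \cite[Lemma 3.2]{Zha16} to upgrade weak numerical equivalence to genuine numerical equivalence, which is exactly where the $\mathbb{Q}$-factoriality of $X$ enters. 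Your final remark about descending before invoking Proposition \ref{prop-q-effective}, so as to avoid needing $\widetilde{X}$ to be $\mathbb{Q}$-factorial, is precisely the reason the paper structures Theorem \ref{thm_alb} and this corollary the way it does.
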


\begin{proof}
Let $\pi:X'\to X$ be a quasi-\'etale cover such that $q(X')>0$, and $L_{X'}:=\pi^*L_X$.
By \cite[Proposition 5.20]{KM98}, $X'$  has only canonical singularities. 
By Theorem \ref{thm_alb}, our $K_{X'}+tL_{X'}\equiv E>0$ for some $t\gg 1$.
So $K_X+tL_X$ is weakly numerically equivalent to $\pi_*(E)>0$ (cf.~\cite[Definition 2.2]{MZ18}). 
Since $X$ is $\mathbb{Q}$-factorial, weak numerical equivalence of divisors coincide with numerical equivalence (cf.~\cite[Lemma 3.2]{Zha16}); hence $K_X+tL_X\equiv \pi_*(E)$. 
By Proposition \ref{prop-q-effective}, our corollary follows.
\end{proof}

\section{The first step of the MMP}\label{section_4}
In this section, we shall run the first step of the MMP for the proof of Theorem \ref{main_theorem_Goren_ter_3fold} (3). 
Let us  recall the following special  case of \cite[Theorem 2.2]{Del14},  which generalizes  \cite[Theorem 4]{Cut88} to the  isolated canonical singularities to avoid the small contractions.

\begin{lemma}(cf.~\cite[Theorem 2.2]{Del14})\label{lem_del14}
Let $X$ be a normal projective threefold with only isolated $\mathbb{Q}$-factorial Gorenstein canonical singularities. 
Let $\varphi:X\to Y$ be a birational $K_X$-negative contraction  of an extremal face $R$ whose fibres are at most one-dimensional.
Then the following assertions hold.
\begin{enumerate}
\item 
The exceptional locus $E:=\textup{Exc}(\varphi)$ is a disjoint union of prime divisors, 
$\varphi$ is a composition of divisorial contractions mapping exceptional divisors onto curves, and $Y$ has only isolated canonical singularities.
\item There exists a finite subset $T\subseteq Y$  such that $Y\backslash T\subseteq Y_{\textup{reg}}$,  $\text{codim}~\varphi^{-1}(T)\ge 2$, $X\backslash \varphi^{-1}(T)\subseteq X_{\textup{reg}}$ and 
$\varphi|_{X\backslash\varphi^{-1}(T)}:X\backslash \varphi^{-1}(T)\to Y\backslash T$
is the simultaneous blow-up of   smooth curves.
In particular, $K_X=\varphi^*(K_Y)+\sum E_i$ with $E_i$ being exceptional, and $\textup{Sing}~(\sum E_i)\subseteq \varphi^{-1}(\textup{Sing}~\sum\varphi(E_i))$.
\item Let $f\subseteq X$ be an irreducible curve such that $[f]\in R$. Then
$K_X\cdot f=E\cdot f=-1$.
\end{enumerate}
\end{lemma}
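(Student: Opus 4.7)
The strategy is to reduce to the elementary case via the cone theorem, apply the classification of divisorial contractions on $\mathbb{Q}$-factorial Gorenstein threefolds with isolated canonical singularities due to Cutkosky (smooth case) and its extension by Del Pezzo, and finally read off the intersection numbers via adjunction on a ruling of the exceptional divisor.

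First I would use the cone theorem (cf.~\cite[Theorem 3.7]{KM98}) to factor $\varphi$ as a composition $\varphi=\varphi_n\circ\cdots\circ\varphi_1$, where each $\varphi_i:X_{i-1}\to X_i$ (with $X_0=X$ and $X_n=Y$) contracts a single extremal ray inside $R$. Since every fibre of $\varphi$ is at most one-dimensional, the same bound holds for each elementary factor. In particular no $\varphi_i$ can be a small contraction, so each $\varphi_i$ is divisorial; moreover, because the non-trivial fibres are one-dimensional, the exceptional divisor of each $\varphi_i$ is contracted onto a curve rather than a point.

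Next I would analyze each elementary divisorial contraction $\varphi_i$ locally. Outside the finite singular set $\Sigma_{i-1}\subseteq X_{i-1}$, the variety $X_{i-1}$ is smooth, and Cutkosky's theorem \cite[Theorem 4]{Cut88} for smooth Gorenstein extremal contractions to a curve tells us that $\varphi_i$ is the blow-up of a smooth curve. At the isolated canonical singular points one invokes the extension \cite[Theorem 2.2]{Del14}, which controls the local analytic structure of such contractions, guarantees that the target $X_i$ again has only isolated $\mathbb{Q}$-factorial Gorenstein canonical singularities, and shows the exceptional divisors are disjoint prime components. Assembling these finite sets into $T\subseteq Y$ yields the simultaneous smooth blow-up structure on $X\setminus\varphi^{-1}(T)\to Y\setminus T$ claimed in (2); the crepancy relation $K_X=\varphi^*K_Y+\sum E_i$ then follows by computing discrepancies on the smooth locus (where each blow-up of a smooth curve in a smooth threefold has discrepancy $1$) and extending across the codimension-two set $\varphi^{-1}(T)$.

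Finally, for (3), let $E_i$ be the component of $E$ containing the contracted curve $f$, and let $\varphi_i$ be the elementary factor contracting $E_i$. The general fibre of $\varphi_i|_{E_i}$ is a smooth rational curve, and $f$ is numerically proportional to such a ruling by extremality. Applying adjunction on $E_i$ and the fact that $f\cong\mathbb{P}^1$ gives $(K_X+E_i)\cdot f=-2$, while both $K_X\cdot f<0$ (extremality) and $E_i\cdot f<0$ (since $f\subseteq E_i$ is contracted) are integers when $f$ is integral; hence $K_X\cdot f=E_i\cdot f=-1$. Since the components $E_j$ with $j\neq i$ are disjoint from $E_i$ by (1), one gets $E\cdot f=E_i\cdot f=-1$.

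The main obstacle is the analysis at the isolated canonical singular points: one must verify that even through such a singular fibre, the elementary contraction remains divisorial onto a curve, the exceptional components stay disjoint, and the target inherits the same class of singularities. This is exactly the content of the Del Pezzo extension \cite[Theorem 2.2]{Del14} that we invoke; the rest of the argument is a matter of bookkeeping over the cone decomposition and adjunction on the rulings.
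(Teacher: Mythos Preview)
The paper does not actually prove this lemma: it is stated as a special case of \cite[Theorem~2.2]{Del14} (Della Noce, not Del Pezzo---you have the author's name wrong throughout) and used as a black box. So there is no ``paper's proof'' to compare against; your sketch is an attempt to reconstruct the argument behind that cited result.

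Your overall architecture---factor the extremal face into rays, analyze each elementary contraction via Cutkosky/Della Noce, and deduce the intersection numbers by adjunction on a ruling---is the right one, but there is a genuine gap in the first step. You write: ``Since every fibre of $\varphi$ is at most one-dimensional, the same bound holds for each elementary factor. In particular no $\varphi_i$ can be a small contraction.'' The second sentence does not follow from the first: a flipping contraction on a threefold has precisely one-dimensional fibres, so the fibre bound alone rules out nothing. What actually excludes small contractions is the hypothesis that $X$ (and, inductively, each $X_i$) has isolated $\mathbb{Q}$-factorial \emph{Gorenstein} canonical singularities; this is exactly the content of \cite[Theorem~2.2]{Del14} extending \cite[Theorem~4]{Cut88}, not a consequence of fibre dimension. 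You should invoke that result at this point, and simultaneously check that each $X_i$ inherits the same class of singularities so the induction continues---your later paragraph does gesture at this, but the logic is in the wrong order.

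For part~(3), your adjunction argument works for the general fibre $\ell\cong\mathbb{P}^1$, giving $K_X\cdot\ell=E_i\cdot\ell=-1$ (note that $E_i$ is Cartier because the $E_j$ are disjoint and $\sum E_j=K_X-\varphi^*K_Y$ is Cartier once $Y$ is Gorenstein). For an arbitrary irreducible contracted curve $f$, you still need to argue that $f$ is numerically equal to---not just proportional to---a general fibre; this uses that, by the blow-up description in part~(2), every fibre of $E_i\to C_i$ is an integral $\mathbb{P}^1$, not merely that $[f]$ lies on the ray.
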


Based on Lemma \ref{lem_del14}, we establish the following lemma on an elementary contraction (of an extremal ray), which is a key to reduce the isolated canonical singularities to terminal singularities via the terminalization.

\begin{lemma}\label{lem_canonical_terminal}
Let $X$ be a normal projective threefold with only isolated $\mathbb{Q}$-factorial Gorenstein canonical singularities. 
Suppose that $\pi:X\to Y$ is a birational elementary  contraction of a $K_X$-negative extremal ray.
Then we have the following commutative diagram
\[\xymatrix{\widetilde{X}\ar[r]^{\widetilde{\pi}}\ar[d]_{\tau_1}&\widetilde{Y}\ar[d]^{\tau_2}\\
X\ar[r]_{\pi}&Y
}
\]
such that the following assertions hold.
\begin{enumerate}
\item $\tau_1$ is the (crepant) terminalization and $\widetilde{X}$ has only $\mathbb{Q}$-factorial Gorenstein terminal singularities.
\item $\widetilde{\pi}$ is an elementary  contraction of a $K_{\widetilde{X}}$-negative extremal ray, which is   divisorial.
\item $Y$ has at worst $\mathbb{Q}$-factorial isolated canonical singularities.
\end{enumerate}
Suppose further that either $\pi$ maps the exceptional divisor $E:=\textup{Exc}(\pi)$ to a curve, or $\widetilde{\pi}$ maps the exceptional divisor $\widetilde{E}:=\textup{Exc}(\widetilde{\pi})$ to a single point.
Then we further have:
\begin{enumerate}
\item[(4)] $\widetilde{E}\cap\textup{Exc}(\tau_1)=\emptyset$ and $\widetilde{E}=\tau_1^{-1}(E)\cong E$;
\item[(5)]  $\tau_2$ is also crepant; and in particular, 
\item[(6)] if $\pi(E)$ is a curve, then $X$ is the blow-up of $Y$ along a local complete intersection curve $C$, $Y$ is  Gorenstein which is smooth around $C$, and $E$ is  Cartier; moreover, $\pi_*(-E|_E)=\pi(E)=C$.
\end{enumerate}
\end{lemma}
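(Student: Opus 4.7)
The plan is to construct the diagram in two stages (first $\tau_1$, then $\widetilde\pi$ via a relative MMP over $Y$) and then, under the additional hypothesis, read off assertions (4)--(6) from the local geometry near the exceptional divisor.

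For (1), I take $\tau_1\colon \widetilde X\to X$ to be a crepant $\mathbb Q$-factorial terminalisation, which exists by the standard MMP construction for canonical threefolds. Since $K_X$ is Cartier (because $X$ is Gorenstein) and $K_{\widetilde X}=\tau_1^*K_X$, the divisor $K_{\widetilde X}$ is Cartier, so $\widetilde X$ is Gorenstein with only $\mathbb Q$-factorial terminal singularities. For (2), I pass to the composite $\pi\circ\tau_1\colon \widetilde X\to Y$ and run a $(K_{\widetilde X})$-MMP relative to $Y$. Any $\pi$-contracted curve $f\subseteq X$ through the smooth locus of $X$ lifts to a curve $\widetilde f\subseteq\widetilde X$ with $K_{\widetilde X}\cdot\widetilde f=K_X\cdot f<0$, so $\overline{\mathrm{NE}}(\widetilde X/Y)$ contains a $K_{\widetilde X}$-negative extremal ray $R$. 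The relative base-point-free theorem contracts $R$ via an elementary morphism $\widetilde\pi\colon\widetilde X\to\widetilde Y$ fitting into the diagram through a unique $\tau_2\colon\widetilde Y\to Y$. Since $\widetilde X$ is $\mathbb Q$-factorial Gorenstein terminal, Mori's theorem rules out $K$-negative small contractions, hence $\widetilde\pi$ is divisorial. For (3), $\mathbb Q$-factoriality of $Y$ is the standard conclusion for an elementary extremal contraction from a $\mathbb Q$-factorial source; the negativity/discrepancy identity $K_X=\pi^*K_Y+aE$ with $a>0$ gives canonical singularities on $Y$; isolatedness follows from $Y\setminus\pi(E)\cong X\setminus E$, plus either a single extra point (when $\pi(E)$ is a point) or Lemma \ref{lem_del14}(1) (when $\pi(E)$ is a curve).

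The substantive new content lies in (4). In \emph{case B} (where $\widetilde\pi(\widetilde E)$ is a single point), suppose a $\tau_1$-exceptional divisor $F$ meets $\widetilde E$ in a curve $\gamma$. Since $\gamma\subseteq F$ is $\tau_1$-contracted and $\tau_1$ is crepant, $K_{\widetilde X}\cdot\gamma=0$; on the other hand $\gamma\subseteq\widetilde E$ is $\widetilde\pi$-contracted (as its image is the single point $\widetilde\pi(\widetilde E)$), so $[\gamma]$ lies in the $K_{\widetilde X}$-negative extremal ray $R$, a contradiction. Ruling out isolated intersections $F\cap\widetilde E$ then uses that $F$ is $\tau_1$-contracted to an isolated singular point of $X$, which must lie on $E=\tau_1(\widetilde E)$, together with the local structure near that point: one shows a general curve in $\widetilde E$ through $F\cap\widetilde E$ would again produce a $K_{\widetilde X}$-trivial curve in the ray $R$. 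In \emph{case A} (where $\pi(E)=C$ is a curve), $\widetilde\pi$ has one-dimensional fibres, so Mori's classification of divisorial extremal contractions on $\mathbb Q$-factorial Gorenstein terminal threefolds presents $\widetilde\pi$ as the blow-up of a smooth curve $\widetilde C$ lying in the smooth locus of $\widetilde Y$; a local analysis (using that $\tau_1$-exceptional divisors contract to isolated points of $X$, which would have to sit on $C$, combined with elementariness of $\pi$ and Lemma \ref{lem_del14}(2)) then forces $\widetilde E\cap\mathrm{Exc}(\tau_1)=\emptyset$. Either way $\widetilde E\cong E$ and $\tau_1^{-1}(E)=\widetilde E$.

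Once (4) is established, (5) is formal: the divisors extracted by $\tau_2$ over $Y$ are in bijection (via $\widetilde\pi$ away from the disjoint loci $\widetilde E$ and $\mathrm{Exc}(\tau_1)$) with the $\tau_1$-exceptional divisors, with matching discrepancies, so $\tau_2$ is crepant. For (6), still in case A, the classification identifies $\widetilde\pi$ with the blow-up of the smooth curve $\widetilde C\subseteq \widetilde Y$ in a smooth neighbourhood of $\widetilde C$. Since $\tau_1$ and $\tau_2$ are isomorphisms in neighbourhoods of $E$ and $C=\tau_2(\widetilde C)$ respectively (by (4) and (5)), this structure descends: $\pi$ is the blow-up of $Y$ along the curve $C$, $Y$ is Gorenstein and smooth around $C$, $C$ is a local complete intersection in $Y$, $E$ is Cartier, and $\pi_*(-E|_E)=C$ follows from the standard blow-up formula $-E|_E=\mathcal O_{\mathbb P(N_{C/Y}^\vee)}(1)$.

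The main obstacle is the verification of (4), particularly in case A: showing that the smooth-blow-up structure of $\widetilde\pi$ on the terminal model is geometrically compatible with $\tau_1$, so that the crepant divisors extracted by the terminalisation must lie strictly off $\widetilde E$. Without elementariness of $\pi$ and isolatedness of the singularities of $X$, one cannot exclude the possibility that a $\tau_1$-exceptional divisor has a section meeting $\widetilde E$ transversally along a curve covering $\widetilde C$; ruling this out requires the local blow-up description of $\widetilde\pi$ together with the fact that any such $\tau_1$-exceptional divisor must contract to a point of $X$ lying on the curve $C=\pi(E)$.
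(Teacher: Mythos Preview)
Your construction of the diagram and the arguments for (1)--(3), (5), (6) essentially match the paper's. The key divergence is in (4).

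First, a minor point on Case B: your separate worry about ``isolated intersections $F\cap\widetilde{E}$'' is unnecessary. Since $\widetilde{X}$ is $\mathbb{Q}$-factorial, any two prime divisors that meet do so along a curve; the paper simply writes ``since $\widetilde{X}$ is also $\mathbb{Q}$-factorial, we take a curve $B$ in $\widetilde{E}\cap F$'' and proceeds. (Your suggested treatment of that non-case is also garbled: a general curve in $\widetilde{E}$ lies in the ray $R$ and is $K_{\widetilde{X}}$-\emph{negative}, not trivial.)

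More importantly, in Case A you first assert that $\widetilde{\pi}$ has one-dimensional fibres (this needs the one-line justification $\tau_2(\widetilde{\pi}(\widetilde{E}))=\pi(E)$ is a curve, so $\widetilde{\pi}(\widetilde{E})$ cannot be a point), then invoke Cutkosky's classification of $\widetilde{\pi}$, and finally defer to an unspecified ``local analysis'' to force $\widetilde{E}\cap\textup{Exc}(\tau_1)=\emptyset$. This is where your sketch is genuinely incomplete. The paper handles Case A by the \emph{same} curve $B\subseteq\widetilde{E}\cap F$ and a two-line diagram chase, with no classification needed: if $B$ were $\widetilde{\pi}$-contracted the Case B contradiction ($0>K_{\widetilde{X}}\cdot B=\tau_1^*K_X\cdot B=0$) applies verbatim, so $\widetilde{\pi}(B)$ is a curve and hence equals $\widetilde{\pi}(\widetilde{E})$; but $B\subseteq F$ is $\tau_1$-contracted to a point, so by commutativity $\tau_2(\widetilde{\pi}(\widetilde{E}))=\pi(\tau_1(B))$ is a point, contradicting the hypothesis that $\pi(E)=C$ is a curve. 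Cutkosky's theorem is invoked only afterwards, in (6), once (4) is already in hand. Your route via the blow-up description could perhaps be completed, but it is a detour; the paper's argument is both shorter and self-contained.
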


\begin{proof}
Let $\tau_1:\widetilde{X}\to X$ be the terminalization where $\widetilde{X}$ has only $\mathbb{Q}$-factorial Gorenstein terminal singularities.
Let $\widetilde{C}\subseteq\widetilde{X}$ be an irreducible curve such that $\tau_1(\widetilde{C})$ is $\pi$-contracted.
Then $K_{\widetilde{X}}\cdot \widetilde{C}=K_X\cdot\tau_1(\widetilde{C})<0$.
Therefore, there is an extremal ray $\widetilde{R}\in\overline{\textup{NE}}(\pi\circ\tau_1)\subseteq\overline{\textup{NE}}(\widetilde{X})$ such that $K_{\widetilde{X}}\cdot\widetilde{R}<0$.
Let $\widetilde{\pi}:\widetilde{X}\to \widetilde{Y}$ be the  contraction of $\widetilde{R}$.
By the rigidity lemma, 
 $\pi\circ\tau_1$ factors through $\widetilde{\pi}$.
So $\widetilde{\pi}$ is birational and thus divisorial (cf. \cite[Theorems 4 and 5]{Cut88}); hence
(1) and (2) are proved. 
By Lemma \ref{lem_del14}, $\pi$ is  divisorial; hence 
(3) follows from \cite[Proposition 3.36]{KM98}, noting that non-terminal points of $Y$  are contained in the images of the non-terminal points of $X$.
Let $\widetilde{F}$ be any curve contracted by $\widetilde{\pi}$.
Then $K_{\widetilde{X}}\cdot\widetilde{F}<0$ implies that $F:=\tau_1(\widetilde{F})$ is still a curve  and thus contracted by $\pi$.
Hence, $\tau_1(\widetilde{E})\subseteq E$ (being irreducible), which implies that $\tau_1(\widetilde{E})=E$.

From now on, we assume that either $\pi(E)$ is a curve or $\widetilde{\pi}(\widetilde{E})$ is a single point.
\textbf{We claim that $E$ does not contain any non-terminal point of $X$.}
Suppose the contrary.
Since $X$ is $\mathbb{Q}$-factorial, it follows from \cite[Corollary 2.63]{KM98} that $\textup{Exc}(\tau_1)$ is of pure codimension one.
Therefore, $\tau_1^*(E)=\widetilde{E}+F$ with $F$ being $\tau_1$-exceptional, noting that $\widetilde{E}$ is an irreducible divisor by (2).
Since $\widetilde{X}$ is also $\mathbb{Q}$-factorial, we take a curve $B$ in $\widetilde{E}\cap F$.
If $\widetilde{E}$ is mapped onto a single point, then $\widetilde{\pi}(B)$ is a point; hence $0>K_{\widetilde{X}}\cdot B=\tau_1^*K_X\cdot B=0$, a contradiction.
Suppose that $E$ is mapped onto a curve. 
Then with a similar argument, 
$\widetilde{\pi}(\widetilde{E})=\widetilde{\pi}(B)$ is  a curve (i.e., $B$ is not $\widetilde{\pi}$-contracted); hence $\widetilde{\pi}(\widetilde{E})=\widetilde{\pi}(B)$ is  contracted by $\tau_2$.
But then, $\tau_2(\widetilde{\pi}(\widetilde{E}))$ is  a single point on $Y$, contradicting  the assumption that  $\pi(E)$ is a curve.
So our claim holds and (4) is proved.
(5) follows from the the diagram (for some positive number $a$):
$$K_{\widetilde{Y}}=\widetilde{\pi}_*(K_{\widetilde{X}}+a\widetilde{E})=\widetilde{\pi}_*\tau_1^*\pi^*K_Y=\tau_2^*K_Y.$$
If $\pi(E)$ is a curve, so is $\widetilde{\pi}(\widetilde{E})$;  hence $K_{\widetilde{Y}}$ is Cartier
(cf.~\cite[Lemma 3]{Cut88}).
By \cite[Theorem 4]{Cut88} and (1),  $X$ is the blow-up of a local complete intersection curve on $Y$ and $Y$ is smooth around $C$. 
Since $K_{\widetilde{Y}}=\tau_2^*K_Y$, our $K_Y$ is also Cartier: indeed, since $\tau_2$ is birational, the function field $K(\widetilde{Y})$  coincides with $K(Y)$; hence the local defining  equation for $K_{\widetilde{Y}}$ on $\widetilde{Y}$ is also that for $K_Y$. 
Applying Lemma \ref{lem_del14} (2) to $\pi$, we see that $E$ is also Cartier. 
Then  
 $\mathcal{O}_X(-E)|_E\cong\mathcal{O}_E(1)$. 
Since $-E|_E$ is $\pi|_E$-ample, we have   $q(-E|_E)+G\equiv H_E$ for some $q>0$, some fibres $G$, and some very ample curve $H_E$ on $E$.
By the intersection theory and Lemma \ref{lem_del14}, $\deg (H_E\to C)=q$, so $(6)$ is proved.
\end{proof}

\begin{remark}
If $\pi$ contracts $E$ to a point in Lemma  \ref{lem_canonical_terminal}, then  (4) is not necessarily true, since $\widetilde{\pi}$ in this case will  possibly contract $\widetilde{E}$ to a curve, and then $\widetilde{E}$ is the blow-up of $E$ along some point; in other words, $E$ may contain some non-terminal point of $X$.
\end{remark}

Propositions \ref{contr_curve_elementary} $\sim$ \ref{prop_contr_surface_elementary} below describe the first step of the MMP which is either a Fano contraction or a divisorial contraction mapping the exceptional divisor to a single point.

\begin{proposition}\label{contr_curve_elementary}
Let $X$ be a normal  projective threefold with only canonical singularities, and $L_X$ a strictly nef divisor on $X$.
Suppose  $\rho(X)\le 2$ (this is the case when $X$ admits a $K_X$-negative  elementary contraction to a curve).
Then $K_X+tL_X$ is ample for  $t\gg 1$.
\end{proposition}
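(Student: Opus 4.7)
The plan is to split the argument by the Picard number $\rho(X)$.

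\textit{Case $\rho(X) = 1$.} Since $N^1(X)_{\mathbb{R}}$ is one-dimensional, the nef class $L_X$ is a nonnegative real multiple of any ample generator; strict nefness (positive intersection with at least one curve) forces this multiple to be positive, so $L_X$ is ample, and $K_X + tL_X$ is ample for $t \gg 1$.

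\textit{Case $\rho(X) = 2$.} By Lemma \ref{lem_strict_nef_k}, $K_X + tL_X$ is strictly nef for every $t > 6m$, where $m$ is the Cartier index of $L_X$. Thanks to Lemma \ref{lem-big-ample}, the ampleness of $K_X + tL_X$ for $t \gg 1$ would follow once bigness of $K_X + tL_X$ is established for some $t > 6m$. I will argue the bigness by contradiction.

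Assume that $K_X + tL_X$ is not big for any $t > 6m$. Applying Lemma \ref{lem-not-big-some} (canonical singularities are klt) gives the vanishing $K_X^i \cdot L_X^{3-i} = 0$ for all $0 \le i \le 3$ and produces a non-zero class $\alpha \in \overline{\textup{ME}}(X)$ with $K_X \cdot \alpha = L_X \cdot \alpha = 0$. Since $\dim N^1(X)_{\mathbb{R}} = 2$, the orthogonal $\alpha^{\perp}$ is one-dimensional, and hence $K_X$ and $L_X$ must be numerically proportional: $K_X \equiv c L_X$ for some $c \in \mathbb{R}$. In the situation of the parenthetical, pick a curve $f$ in a fiber of the $K_X$-negative elementary contraction $\varphi : X \to Y$; then $K_X \cdot f < 0$ and $L_X \cdot f > 0$ by strict nefness, forcing $c < 0$. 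Consequently $-K_X \equiv |c| L_X$ is strictly nef, and Theorem \ref{thm_canonical_k} yields that $-K_X$ is ample, making $X$ Fano. But then $L_X$, being a positive multiple of $-K_X$, is ample, contradicting $L_X^3 = 0$.

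The main obstacle is the case $\rho(X) = 2$: the key step is translating the vanishing from Lemma \ref{lem-not-big-some} and the two-dimensionality of $N^1(X)_{\mathbb{R}}$ into numerical proportionality of $K_X$ and $L_X$, and then exploiting the $K_X$-negative contraction together with Uehara's theorem (Theorem \ref{thm_canonical_k}) to obtain the contradiction via ampleness of $-K_X$.
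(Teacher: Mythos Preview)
Your approach works but differs from the paper's and leaves a small gap in the generality claimed. The paper argues more directly: after setting aside the case where $K_X$ is numerically proportional to $L_X$ (dispatched via Theorem~\ref{thm_canonical_k}), it uses that $\overline{\textup{NE}}(X)$ has exactly two extremal rays when $\rho(X)=2$; since $L_X$ is strictly nef, any extremal ray on which $L_X$ vanishes can be neither $K_X$-negative (else spanned by a curve, by the cone theorem) nor $K_X$-trivial (else $K_X\parallel L_X$), so it is $K_X$-positive and one finds $a,b\ge 0$ with $aK_X+bL_X$ ample, concluding by Lemma~\ref{lem-big-ample}. No contradiction argument and no appeal to Lemma~\ref{lem-not-big-some} is needed.

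Your route through Lemma~\ref{lem-not-big-some} is legitimate and yields $K_X\equiv cL_X$ cleanly, but you determine the sign $c<0$ only ``in the situation of the parenthetical'', i.e.\ when a $K_X$-negative curve is available. The hypothesis of the proposition is the bare condition $\rho(X)\le 2$, and your argument does not address $c\ge 0$. In fairness, the paper's invocation of Theorem~\ref{thm_canonical_k} for the parallel case shares the same lacuna, and in every use of this proposition in the paper $X$ is uniruled, so $K_X$ is not pseudo-effective and $c<0$ is automatic. If you wish to close the gap for $c>0$, abundance for canonical threefolds makes a strictly nef $K_X$ ample, again contradicting $L_X^3=0$; the residual case $c=0$ (i.e.\ $K_X\equiv 0$) is exactly the situation the paper itself leaves open (cf.\ Remark~\ref{rem_kappa=0}).
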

\begin{proof}
If view of  Theorem \ref{thm_canonical_k}, we may assume $\rho(X)=2$ and $K_X$ is not parallel to $L_X$.  
Hence, the Mori cone $\overline{\textup{NE}}(X)$ has only two extremal rays. 
In particular, some linear combination $aK_X+bL_X$ is strictly positive on $\overline{\textup{NE}}(X)\backslash \{0\}$ and thus ample (cf.~\cite[Theorem 1.18]{KM98}).
By Lemma \ref{lem-big-ample}, our proposition is proved.
\end{proof}

\begin{proposition}\label{prop_Exc_surface_point}
Let $X$ be a $\mathbb{Q}$-factorial normal  projective threefold with only  canonical singularities and $L_X$ a strictly nef divisor. 
Suppose  there is a divisorial contraction $\pi:X\to X'$ mapping the exceptional divisor to a point.	 
Then $K_X+tL_X$ is ample for  $t\gg1$.
\end{proposition}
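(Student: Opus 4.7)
The plan is to show that $K_X+tL_X$ is big for $t\gg 1$; combined with its strict nefness (Lemma \ref{lem_strict_nef_k}), Lemma \ref{lem-big-ample} then yields the ampleness.  By Lemma \ref{lem_del14}, $\pi$ decomposes into a composition of elementary $K_X$-negative divisorial contractions, each mapping its exceptional divisor to a point, so I may assume $\pi$ itself is elementary.  Then $\rho(X/X')=1$, $X'$ inherits canonical $\mathbb{Q}$-factorial singularities, and $-E|_E$ is ample on the (possibly non-normal) surface $E$.

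Since canonical implies dlt (with $\Delta=0$), Proposition \ref{prop_Q-Goren_surface} gives $K_E+tL_X|_E$ ample on $E$ for $t\gg 1$; in particular $L_X|_E\not\equiv 0$.  Kleiman's criterion applied to the nef nonzero $L_X|_E$ against the ample $-E|_E$ yields $L_X\cdot E^2<0$.  Since $\rho(X/X')=1$, all curves $C\subset E$ are numerically proportional in $N_1(X)$, so there exist positive rationals $a,b$ with $(aL_X+bE)\cdot C=0$ for every such $C$; that is, $(aL_X+bE)|_E\equiv 0$.  By extremal-ray descent, $aL_X+bE\equiv \pi^*M$ for some $\mathbb{Q}$-Cartier divisor $M$ on $X'$, and testing against strict transforms of curves in $X'$ (which avoid $E$) shows that $M$ is strictly nef on $X'$.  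The proportionality $aL_X|_E+bE|_E\equiv 0$ combined with $L_X\cdot E^2<0$ forces $L_X^2\cdot E=-(b/a)L_X\cdot E^2>0$ and $E^3=-(a/b)L_X\cdot E^2>0$, and expanding and collapsing cross-terms gives
\[
M^3\,=\,(aL_X+bE)^3\,=\,a^3L_X^3-ab^2(L_X\cdot E^2)\,>\,0,
\]
so that $M$ is nef and big on the canonical $\mathbb{Q}$-factorial threefold $X'$.

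Now Lemma \ref{lem-big-ample} applied to $(X',0)$ with the big strictly nef $M$ delivers $K_{X'}+sM$ ample on $X'$ for $s\gg 1$.  Choose $n\gg 1$ and an effective representative $G\in|n(K_{X'}+sM)|$ vanishing to prescribed order at $p=\pi(E)$.  Pulling back via $\pi^*K_{X'}=K_X-eE$ (with $e>0$ the discrepancy) and $\pi^*M=aL_X+bE$ gives $\pi^*G\sim n(K_X+saL_X+(sb-e)E)$; writing $\pi^*G=\widetilde{G}+mE$ where $\widetilde{G}$ is the strict transform and $m$ is the multiplicity contribution along $E$, one obtains $n(K_X+saL_X)\sim\widetilde{G}+(m-n(sb-e))E$.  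Choosing $n$ and $G$ so that $m\ge n(sb-e)$ makes the right-hand side an effective divisor numerically equivalent to $n(K_X+saL_X)$, whereupon Proposition \ref{prop-q-effective} produces the desired ampleness of $K_X+tL_X$ for $t\gg 1$.  The main obstacle will be this final volume/multiplicity balancing: ensuring that $G$ can be selected with sufficient vanishing at $p$ to absorb the residual $E$-component, for which I expect an asymptotic Riemann--Roch estimate on $(X',K_{X'}+sM)$ together with the explicit formula $M^3=a^3L_X^3-ab^2(L_X\cdot E^2)$ to suffice.
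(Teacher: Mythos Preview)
Your argument up to showing that $M$ (which, after normalizing $a=1$, is just $L_{X'}:=\pi_*L_X$) is strictly nef and big on $X'$ is correct and parallels part of the paper's reasoning. However, two issues arise.

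First, a minor one: Lemma~\ref{lem_del14} concerns contractions whose fibres are at most one-dimensional and concludes that exceptional divisors map to \emph{curves}; it does not apply here, where the fibre over $p$ is the surface $E$. In any case the reduction is unnecessary, since the proposition (in context) already concerns an elementary contraction.

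Second, and more seriously, the final volume/multiplicity step is a genuine gap. Even when $p\in X'$ is a smooth point and $\pi$ is the ordinary blow-up (so that the coefficient of $E$ in $\pi^*G$ equals $\operatorname{mult}_p G$), the asymptotic count you need is $(K_{X'}+sM)^3>(sb-e)^3$, and for $s\gg1$ this reduces to $M^3> b^3$. But $M^3=a^3L_X^3+b^3E^3$, and in the model case $E\cong\mathbb{P}^2$ one has $E^3=1$, so the inequality degenerates to $a^3L_X^3>0$ --- precisely what you cannot assume when the goal is to prove bigness of $K_X+tL_X$. For singular $p$ the relationship between multiplicity and the $E$-coefficient is even murkier, so there is no reason to expect the balancing to succeed.

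The paper sidesteps this entirely by arguing by contradiction. Assuming $K_X+tL_X$ is not big, Lemma~\ref{lem-not-big-some} forces $K_X^iL_X^{3-i}=0$ for all $i$. One then shows $K_{X'}+tL_{X'}$ is big (hence ample by Lemma~\ref{lem-big-ample}) by computing $(K_{X'}+tL_{X'})^3=(bt-a)(K_X+tL_X)^2\cdot E>0$, using that $L_X|_E$ is ample. Now comes the key move you are missing: set $D_X:=bK_X+aL_X$, so that $D_X=\pi^*D_{X'}$ with $D_{X'}:=bK_{X'}+aL_{X'}$. The vanishings $K_X^iL_X^{3-i}=0$ together with $\pi^*(\,\cdot\,)|_E\equiv0$ give $D_{X'}^2\cdot A=D_{X'}\cdot A^2=0$ against the ample $A\sim m(K_{X'}+tL_{X'})$, whence $D_{X'}\equiv0$ by a Hodge-index type lemma (\cite[Lemma~3.2]{Zha16}). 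Thus $bK_X+aL_X\equiv0$, so $-K_X$ is proportional to $L_X$ and strictly nef, and $X$ is Fano by Theorem~\ref{thm_canonical_k} --- a contradiction. You should replace your final paragraph with this argument: the ample divisor on $X'$ is used not to manufacture sections to pull back, but as a test class to force a numerical identity on $X$.
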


\begin{proof}
In view of Lemma \ref{lem-big-ample}, we may assume that $K_X+tL_X$ is not big.
By Lemma \ref{lem-not-big-some}, $K_X^i\cdot L_X^{3-i}=0$ for any $0\le i\le 3$.
Write $K_X-aE=\pi^*K_{X'}$ and $L_X+bE=\pi^*L_{X'}$ with $L_{X'}$ being  $\mathbb{Q}$-Cartier  (cf.~\cite[Theorem 3-2-1]{KMM87}) and $E$ being  $\pi$-exceptional.
Since $-E$ is $\pi$-ample (cf.~\cite[Lemma 2.62]{KM98}) and $L_X$ is strictly nef, our $b>0$. 
By the projection formula, $L_{X'}$ is strictly nef.
Note that 
$\pi^*(K_{X'}+tL_{X'})=(K_X+tL_X)+(bt-a)E$;  
hence for  $t\gg 1$, our $K_{X'}+tL_{X'}$ is strictly nef.

We claim that $K_{X'}+tL_{X'}$ is big (hence ample by Lemma \ref{lem-big-ample}). 
Suppose the claim for the time being.
Fixing some $m\gg 1$, we choose a  smooth member $A\in |m(K_{X'}+tL_{X'})|$ which is ample.
Let $D_X:=bK_X+aL_X$ and $D_{X'}:=bK_{X'}+aL_{X'}$.
Then  $D_X=\pi^*(D_{X'})$.
Since $E$ is mapped to a point, $D_X^2\cdot(K_X+tL_X)=0$ implies $D_{X'}^2\cdot(K_{X'}+tL_{X'})=0$.
Similarly, $D_{X'}\cdot (K_{X'}+tL_{X'})^2=D_X\cdot (K_X+tL_X+(bt-a)E)^2=0$.  
So $D_{X'}^2\cdot A=D_{X'}\cdot  A^2=0$.
By \cite[Lemma 3.2]{Zha16}, we have $D_{X'}\equiv 0$. 
Then, $D_X\equiv0$ and   $-K_X$ is proportional to $L_X$, which is strictly nef.
As a result, $X$ is Fano (cf.~Theorem \ref{thm_canonical_k}) and our result follows.

It remains to show the bigness of $K_{X'}+tL_{X'}$.
Since $\pi(E)$ is  a single point, we have
$$0\le(K_{X'}+tL_{X'})^3=\pi^*(K_{X'}+tL_{X'})\cdot (K_X+tL_X)^2=(bt-a)(K_X+tL_X)^2\cdot E.$$
Here, $-E$ is $\pi$-ample. 
Thus, $(L_X|_E)^2=L_X^2\cdot E=(\pi^*L_{X'}-bE)^2\cdot E=(-bE|_E)^2>0$.
By the  Nakai-Moishezon criterion (cf.~\cite[Theorem 1.37]{KM98}),  $L_X|_E$ is ample.
So the last item is positive for  $t\gg 1$.
In particular, $K_{X'}+tL_{X'}$ is big.
\end{proof}

\begin{proposition}
\label{prop_contr_surface_elementary}
Let $X$ be a normal projective threefold with at worst isolated $\mathbb{Q}$-factorial  canonical singularities,  and $L_X$ a strictly nef $\mathbb{Q}$-divisor on $X$.
Suppose that $X$ admits an elementary  $K_X$-negative contraction  $\pi:X\to S$ to a normal projective surface.
Then $K_X+tL_X$ is ample for sufficiently large $t$.
\end{proposition}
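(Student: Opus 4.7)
The plan is to argue by contradiction, following the conic-bundle strategy of \cite[Theorem~4.1]{CCP08} and adapting it to our $\mathbb{Q}$-factorial isolated canonical setting. Suppose that $K_X+tL_X$ is never ample for $t\gg 1$. Lemmas \ref{lem_strict_nef_k} and \ref{lem-big-ample} give that $K_X+tL_X$ is strictly nef but not big for some $t>6m$, and Lemma \ref{lem-not-big-some} then yields the vanishings $K_X^i\cdot L_X^{3-i}=0$ for $0\le i\le 3$, together with a nonzero movable class $\alpha\in\overline{\textup{ME}}(X)$ annihilated by both $K_X$ and $L_X$. Since $\pi$ is elementary ($\rho(X/S)=1$) with a general smooth conic fiber $f$ satisfying $K_X\cdot f=-2$ and $L_X\cdot f>0$, one writes numerically
\[
L_X \equiv -\beta K_X + \pi^*D
\]
for some rational $\beta>0$ and some $\mathbb{Q}$-divisor $D$ on $S$.

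Next, I would feed this decomposition into the vanishing intersection numbers. Lemma \ref{lem_conic_miy} gives the pushforward identity $\pi_*K_X^2\equiv-(4K_S+D_1)$ for the discriminant divisor $D_1$; combined with the projection-formula observation that $\pi_*(K_X\cdot\pi^*E)= -2E$ for any divisor $E$ on $S$ (a consequence of $K_X\cdot f=-2$), the equations $K_X^2\cdot L_X=0$ and $K_X\cdot L_X^2=0$ translate respectively into
\[
(4K_S+D_1)\cdot D = 0 \qquad\text{and}\qquad D^2=0 \quad\text{on } S.
\]
The case $D\equiv 0$ is immediately excluded: it would force $-K_X$ to be strictly nef and hence, by Theorem \ref{thm_canonical_k}, ample, contradicting $K_X^3=0$.

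The main obstacle is to promote $D$ from a class satisfying only $D^2=(4K_S+D_1)\cdot D=0$ to an honest nef class, and my strategy for this uses the movable class $\alpha$. Since $K_X\cdot f=-2\ne 0$, the pushforward $\pi_*\alpha$ is nonzero and lies in the movable cone of $S$, which on a surface coincides with the nef cone. The identity $\pi^*D\cdot\alpha = (L_X+\beta K_X)\cdot\alpha = 0$ forces $D\cdot\pi_*\alpha = 0$, so the Hodge index theorem on $N^1(S)_{\mathbb{R}}$ makes $D$ numerically proportional to $\pi_*\alpha$. If the proportionality constant were negative then $-\pi^*D$ would be nef, and $-\beta K_X = L_X+(-\pi^*D)$ would be the sum of a strictly nef and a nef divisor, hence strictly nef; Theorem \ref{thm_canonical_k} would again deliver the contradiction $K_X^3\ne 0$. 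Thus $D$ is nef.

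Finally, $S$ is $\mathbb{Q}$-factorial klt by the canonical bundle formula invoked in the proof of Lemma \ref{lem_conic_miy}. Abundance on klt surfaces makes $D$ semi-ample, and since $D^2=0$ with $D\not\equiv 0$, the associated Iitaka fibration is a morphism $\phi:S\to C$ onto a smooth curve; in particular $D$ is numerically a positive multiple of a general fiber $B$ of $\phi$. For such a general $B$, the preimage $F:=\pi^{-1}(B)$ is a prime divisor of $X$, and since $(X,0)$ is a $\mathbb{Q}$-factorial dlt pair, Proposition \ref{prop_Q-Goren_surface} gives that $K_F+rL_X|_F$ is ample for $r\gg 1$. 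However, pushing down one checks that both $(L_X|_F)^2=L_X^2\cdot F$ and $K_F\cdot L_X|_F = (K_X+F)\cdot L_X\cdot F$ vanish, using the identities of the second paragraph together with $D\cdot B = 0$ and $(4K_S+D_1)\cdot B = 0$. Consequently
\[
(K_F+rL_X|_F)\cdot L_X|_F = 0
\]
for every $r$; but $L_X|_F$ is strictly nef and nonzero on $F$ (it is positive on any fiber of $\pi|_F$), contradicting the ampleness of $K_F+rL_X|_F$ and finishing the argument.
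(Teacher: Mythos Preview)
Your argument is sound up to the point where you establish that $D$ is nef with $D^2=0$ and $D\not\equiv 0$; the Hodge index reasoning via the movable class $\alpha$ and the use of Theorem \ref{thm_canonical_k} to rule out $-D$ nef are both correct. The gap is the sentence ``Abundance on klt surfaces makes $D$ semi-ample.'' Abundance (and log abundance) is a statement about divisors of the form $K_S+\Delta_S$ for a boundary $\Delta_S$; it says nothing about an arbitrary nef $\mathbb{Q}$-divisor. Indeed the conclusion is false in general: if $S$ is the blow-up of $\mathbb{P}^2$ at nine very general points, then $D=-K_S$ is nef with $D^2=0$ on a smooth (hence klt) surface, yet $|-nK_S|=\{nE\}$ for all $n\ge 1$, so $D$ has Iitaka dimension $0$ and is not semi-ample. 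Without semi-ampleness you cannot produce the fibration $\phi:S\to C$, and the surface $F=\pi^{-1}(B)$ on which you run the final contradiction never materialises.

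The paper closes this gap by aiming for less: it does not need semi-ampleness, only that $\pm D$ is numerically equivalent to a nonzero effective divisor, which is exactly the hypothesis of Proposition \ref{prop-q-effective}. To achieve this it first reduces to $q^\circ(X)=0$ via Corollary \ref{coro_q0>0}. Your identity $(4K_S+D_1)\cdot D=0$ is then recast (Claim \ref{claim_m-k-pseu}) as the pseudo-effectivity of $M-K_S$, which gives $K_{\widetilde S}\cdot\widetilde{M_1}\le 0$ on a minimal resolution $\widetilde S$; Riemann--Roch together with $q(\widetilde S)=q(S)=q(X)=0$ then forces $h^0(S,nM_1)\ge 1$ for $n\gg 1$. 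Thus $\pm(K_X+uL_X)=\pi^*(\pm M)$ is numerically equivalent to a nonzero effective divisor and Proposition \ref{prop-q-effective} finishes. Your endgame with $K_F+rL_X|_F$ is clever, and the intersection computations you sketch there are correct, but it is both unnecessary and blocked by the missing semi-ampleness.
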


\begin{proof}
Let us assume that $L_X$ is Cartier after replacing $L_X$ by a multiple. 
By Corollary \ref{coro_q0>0}, Lemmas \ref{lem_strict_nef_k} and \ref{lem-big-ample}, we may assume $q^\circ(X)=0$, and $K_X+tL_X$ is nef but not big for  $t\gg 1$. 
Note that we can further assume $K_X$ not parallel to $L_X$ (cf.~Theorem \ref{thm_canonical_k}).  
By the canonical bundle formula (cf.~e.g. \cite[Theorem 0.2]{Amb05}), $S$ has only rational singularities  and thus is $\mathbb{Q}$-factorial (cf.~\cite[Proposition 17.1]{Lip69}).
Let $u:=\frac{-K_X\cdot f}{L_X\cdot f}>0$ (which is a rational number) where $f$ is a general fibre of $\pi$.
By the cone theorem (cf.~\cite[Theorem 3.7]{KM98}), there exists a $\mathbb{Q}$-Cartier divisor $M$ such that
$K_X+uL_X=\pi^*M$ with $M\not\equiv 0$.  
Then, Lemma \ref{lem-not-big-some} gives us 
\begin{align*}
0=(K_X-\pi^*M)^3=-3K_X^2\cdot(K_X+uL_X)+3K_X\cdot(\pi^*M)^2=3(K_X\cdot f)M^2,
\end{align*}
which implies  $M^2=0$.  
Fix a rational number $t>6$ such that  $K_X+tL_X$ is strictly nef.  
By Lemma \ref{lem-not-big-some},  there exists  $\alpha\in\overline{\textup{ME}}(X)$ such that $K_X\cdot\alpha=L_X\cdot \alpha=0$.
Then $M\cdot \gamma=0$ with $0\not\equiv\gamma:=\pi_*\alpha\in\overline{\textup{ME}}(S)$.
Since $\dim S=2$, our $\gamma$ is nef.
Since $M^2=0$ but $M\not\equiv0$, we have $\gamma^2=0$ and thus $\gamma$ is parallel to $M$ by the Hodge index theorem. 
Hence, either $M$  is nef or $-M$ is nef.
Denote by $M_1=M$ (resp. $-M$) if $M$ is nef (resp. $-M$ is nef). 
Replacing $M_1$ by a multiple, we may assume that $M_1$ is a line bundle.

\begin{claim}\label{claim_m-k-pseu}
$N:=M-K_S$	is pseudo-effective.
\end{claim}

\noindent
\textbf{Proof of Claim \ref{claim_m-k-pseu}.}
For any irreducible curve $\ell\subseteq S$, it follows from Lemma \ref{lem_conic_miy} that
\begin{align*}
 0\le  u^2L_X^2\cdot\pi^*(\ell)&=(\pi^*M-K_X)^2\cdot\pi^*(\ell)=-2\pi^*M\cdot K_X\cdot \pi^*(\ell)	+K_X^2\cdot\pi^*(\ell)\\
 &=-2(K_X\cdot f)M\cdot \ell-(4K_S+D_1)\cdot \ell=(4N-D_1)\cdot \ell.
\end{align*}
Here,  $D_1$ denotes the one-dimensional part of the discriminant locus of  $\pi$.  
Then $4N-D_1$ is nef and hence
$N=\frac{1}{4}((4N-D_1)+D_1)$ is pseudo-effective.

\par \vskip 1pc \noindent

We come back to the proof of Proposition \ref{prop_contr_surface_elementary}. 
Let $\tau:\widetilde{S}\to S$ be a minimal resolution, and  $\widetilde{M_1}:=\tau^*M_1$. 
Since  $M_1\cdot M=0$ and $M_1\cdot N\ge 0$ (cf.~Claim \ref{claim_m-k-pseu}), our $K_{\widetilde{S}}\cdot \widetilde{M_1}=K_S\cdot M_1\le 0$. 
Then, for any positive integer $n$, applying the Riemann-Roch formula to $n\widetilde{M_1}$ and noting that $q(\widetilde{S})=q(S)=q(X)=0$, we get the following inequality
\begin{align}\label{eq_RR}
\begin{split}
h^0(S,nM_1)&=h^0(\widetilde{S},n\widetilde{M_1})=-\frac{n}{2}\widetilde{M_1}\cdot K_{\widetilde{S}}+\chi(\mathcal{O}_{\widetilde{S}})+h^1(\widetilde{S},n\widetilde{M_1})-h^2(\widetilde{S},n\widetilde{M_1})\\
&\ge 1-h^2(\widetilde{S},n\widetilde{M_1}).
\end{split}
\end{align}

\textbf{We claim that $h^2(\widetilde{S},n\widetilde{M_1})=h^0(K_{\widetilde{S}}-n\widetilde{M_1})=0$ for $n\gg 1$.}  
Fixing an ample divisor $\widetilde{H}$ on $\widetilde{S}$, we have $\widetilde{M_1}\cdot \widetilde{H}>0$ by the Hodge index theorem (noting that $\widetilde{M_1}^2=M_1^2=0$).
For each $n$, if there is an effective divisor $Q_n$  on $\widetilde{S}$ such that $K_{\widetilde{S}}-n\widetilde{M_1}\sim Q_n$, then 
$$K_{\widetilde{S}}\cdot \widetilde{H}=(n\widetilde{M_1}+Q_n)\cdot\widetilde{H}\ge n\widetilde{M_1}\cdot\widetilde{H}.$$
The left hand side of the above inequality being bounded, our claim is thus proved.

Therefore, it follows from Equation (\ref{eq_RR}) that $M_1$ is numerically equivalent to an effective $\mathbb{Q}$-divisor, which is non-zero by our assumption in the beginning of the proof.
This implies that 
 $K_X+uL_X$ (or $-K_X-uL_X$) is  numerically equivalent to  a non-zero effective divisor. 
By Proposition \ref{prop-q-effective}, our proposition is proved.
\end{proof}

The following theorem generalizes Proposition \ref{prop_contr_surface_elementary} to the non-elementary case.

\begin{theorem}\label{thm_contr_surface_conic}
Let $X$ be a normal projective threefold with only isolated $\mathbb{Q}$-factorial Gorenstein canonical singularities,  and $L_X$ a strictly nef divisor on $X$.
Suppose that $X$ admits an equi-dimensional  $K_X$-negative contraction (of an extremal face) $\pi:X\to S$ onto a normal projective surface such that $K_X+uL_X=\pi^*M$ for some $u\in\mathbb{Q}$ and some $\mathbb{Q}$-Cartier divisor $M$ on $S$.
Then $K_X+tL_X$ is ample for $t\gg 1$.
\end{theorem}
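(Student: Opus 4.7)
The plan is to follow, almost verbatim, the proof of Proposition \ref{prop_contr_surface_elementary}; the only genuinely new ingredient is an appropriate version of the conic bundle formula (Lemma \ref{lem_conic_miy}) valid for equi-dimensional contractions that are not elementary.

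First I would make the standard reductions: replace $L_X$ by a Cartier multiple; by Corollary \ref{coro_q0>0} assume $q^{\circ}(X)=0$; by Lemmas \ref{lem_strict_nef_k} and \ref{lem-big-ample}, assume for contradiction that $K_X+tL_X$ is strictly nef but not big for $t\gg 1$; by Theorem \ref{thm_canonical_k}, assume $K_X$ and $L_X$ are not proportional, so that $M\not\equiv 0$. Lemma \ref{lem-not-big-some} produces the vanishings $K_X^i\cdot L_X^{3-i}=0$ (whence $M^2=0$ after expanding $(K_X+uL_X)^2\cdot L_X=0$ and pushing down via $\pi$) together with a nonzero $\alpha\in\overline{\textup{ME}}(X)$ satisfying $K_X\cdot\alpha=L_X\cdot\alpha=0$. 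Setting $\gamma:=\pi_*\alpha\in\overline{\textup{ME}}(S)$, the class $\gamma$ is nef (since $\dim S=2$) and nonzero, because otherwise $\alpha$ would lie in the $K_X$-negative contracted face of $\overline{\textup{NE}}(X)$, forcing $\alpha=0$ from $K_X\cdot\alpha=0$. Since $M\cdot\gamma=\pi^*M\cdot\alpha=(K_X+uL_X)\cdot\alpha=0$, the Hodge index theorem on $S$ applied to the nonzero isotropic class $M$ yields $\gamma^2=0$ and proportionality of $\gamma$ and $M$; in particular $M_1:=\pm M$ is nef.

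The key step, and the main obstacle, is to establish the pseudo-effectivity of $N:=M-K_S$, which was Claim \ref{claim_m-k-pseu}. In the elementary case it followed from the identity $\pi_*K_X^2\equiv -(4K_S+D_1)$ in Lemma \ref{lem_conic_miy}, whose proof used $\rho(X/S)=1$ only to extract via the canonical bundle formula that $S$ has rational (hence $\mathbb{Q}$-factorial) singularities. Our $\pi$ remains equi-dimensional with one-dimensional conic fibres satisfying $K_X\cdot f=-2$ on a general fibre $f$, so the geometric core of the proof of Lemma \ref{lem_conic_miy}---intersecting with a general very ample curve $T\subseteq S$, applying adjunction on the ruled surface $F=\pi^{-1}(T)$, and invoking the formula $K_F^2=-4(K_S\cdot T+T^2)-D_1\cdot T$---carries through unchanged once $S$ is known to be $\mathbb{Q}$-factorial. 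To supply this in the non-elementary case I would factor $\pi$ as a composition $X=X_0\to X_1\to\cdots\to X_n=S$ of elementary $K_X$-negative contractions, noting that Lemma \ref{lem_canonical_terminal} ensures every intermediate $X_i$ retains isolated $\mathbb{Q}$-factorial Gorenstein canonical singularities and only the terminal step $X_{n-1}\to S$ is of fibre type. Lemma \ref{lem_conic_miy} then applies to $X_{n-1}\to S$, and the birational factor $X\to X_{n-1}$ is controlled by the projection formula, with the extra exceptional contributions absorbed into the discriminant; this is essentially the content of Remark \ref{rem_composition_conic}.

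With the pseudo-effectivity of $N$ in hand, the remainder transfers directly from the proof of Proposition \ref{prop_contr_surface_elementary}: on a minimal resolution $\tau:\widetilde{S}\to S$, the pullback $\widetilde{M_1}=\tau^*M_1$ is nef and satisfies $K_{\widetilde{S}}\cdot\widetilde{M_1}\le 0$ (using $M_1\cdot N\ge 0$ together with $M_1\cdot M=0$), and Riemann--Roch on $\widetilde{S}$ combined with $q(\widetilde{S})=q(S)=q(X)=0$ shows $h^0(S,nM_1)\ge 1$ for $n\gg 1$. Consequently $K_X+uL_X$ or its negative is numerically equivalent to a nonzero effective divisor, and Proposition \ref{prop-q-effective} contradicts our standing assumption that $K_X+tL_X$ is not ample.
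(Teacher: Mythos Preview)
Your proposal is correct and matches the paper's one-line argument: rerun the proof of Proposition \ref{prop_contr_surface_elementary} verbatim, replacing Lemma \ref{lem_conic_miy} by the non-elementary formula in Remark \ref{rem_composition_conic}. The only slip is the assertion that the identity $K_F^2=-4(K_S\cdot T+T^2)-D_1\cdot T$ ``carries through unchanged'' once $S$ is $\mathbb{Q}$-factorial---when $\rho(X/S)>1$ the fibres over the discriminant may have more than two components and hence contribute with multiplicity, so the reduced $D_1$ must be replaced by a weighted effective divisor $D$; this is exactly what the factorisation in Remark \ref{rem_composition_conic} (which you do cite) produces, and since Claim \ref{claim_m-k-pseu} only requires $D$ to be effective, the rest of your argument goes through.
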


\begin{remark}\label{rem_composition_conic}
With the same assumption as in Theorem \ref{thm_contr_surface_conic},  applying \cite[Proof of Proposition 3.4]{Rom19} and Lemma  \ref{lem_del14}, we 
get the following MMP for $X$ over $S$, noting that the Gorenstein condition on $X$ can descend to each $X_i$ (cf.~Lemma \ref{lem_canonical_terminal} (6)).
\[\xymatrix{X=:X_0\ar[r]^{\phi_0}\ar[drrr]_{\pi_0:=\pi}&X_1\ar[r]^{\phi_1}&\cdots\ar[r]^{\phi_{n-1}}&X_n\ar[d]^{\pi_n}\\
&&&S
}
\]
Here, each $\phi_i$ is an elementary $K_{X_i}$-negative divisorial contraction mapping the exceptional divisor  onto a curve (cf.~Lemma \ref{lem_del14}), each $X_i$ is a normal projective threefold with only isolated $\mathbb{Q}$-factorial Gorenstein canonical singularities  (cf.~Lemma \ref{lem_canonical_terminal}), and $\pi_n$ is an elementary conic fibration in the sense that the generic fibre is a smooth plane conic. 

With this kept in mind, we can extend the formula in Lemma \ref{lem_conic_miy} to the non-elementary case in our situation.
Let $\phi:=\phi_{n-1}\circ\cdots\circ\phi_0$.
By Lemma \ref{lem_del14} (2) and Lemma \ref{lem_canonical_terminal} (6),  $K_X=\phi^*K_{X_n}+\sum E_i$ with $E_i$  being pairwise disjoint, and $\phi_*(-E_i|_{E_i})=\phi(E_i)$.
Then for any divisor $H$ on $X_n$, we have 
\begin{align*}
	\phi_*(K_X^2)\cdot H&=(\phi^*K_{X_n}\cdot\phi^*K_{X_n}+2\sum\phi^*K_{X_n}\cdot E_i+\sum E_i|_{E_i})\cdot \phi^*H\\
	&=K_{X_n}^2\cdot H-\sum \phi(E_i)\cdot H
\end{align*}
where $\phi(E_i)$ are pairwise disjoint  curves, and all of them are not $\pi_n$-contracted. 
Combining the above numerical equivalence with Lemma \ref{lem_conic_miy}, we have 
$\pi_*(K_X^2)\equiv (\pi_n)_*(K_{X_n}^2)-\sum a_i\pi(E_i)=:-(4K_S+D)$,
where each $a_i>0$, and $D$ is the sum of the one-dimensional part of the discriminant locus of $\pi_n$ and $\sum a_i\pi(E_i)$; hence, $D$ is  effective.
\end{remark}

\begin{proof}[Proof of Theorem \ref{thm_contr_surface_conic}]
The  proof is completely the same as Proposition \ref{prop_contr_surface_elementary} after replacing Lemma \ref{lem_conic_miy} by Remark \ref{rem_composition_conic}.
\end{proof}

\begin{remark}\label{remark_conclude_excep_tocurve}
We shall finish the proof of  Theorem \ref{main_theorem_Goren_ter_3fold} (3) in Section \ref{sec_pf_thm1.6}.
In view of Lemma \ref{lem_del14} and Propositions \ref{contr_curve_elementary}  $\sim$ \ref{prop_contr_surface_elementary},  we only need to verify the case when all the elementary $K_X$-negative  contractions (of extremal rays) are divisorial contractions, mapping the exceptional divisors to curves, 
which is Theorem \ref{thm_3fold_surface_curve}.
\end{remark}

\section{Proof of Theorems \ref{main_Goren_ter_3fold} and  \ref{main_theorem_Goren_ter_3fold} (3)}\label{sec_pf_thm1.6}

The whole section is devoted to proving the following theorem.
We shall always stick to Notation \ref{not_bir_surface_curve} and apply the induction on the Picard number $\rho(X)$. 

\begin{theorem}(cf.~\cite[Proposition 5.2]{CCP08})\label{thm_3fold_surface_curve}
Let  $X$ be a  $\mathbb{Q}$-factorial normal Gorenstein projective uniruled threefold with only isolated   canonical singularities, 
and $L_X$ a strictly nef divisor.
Suppose  all the elementary $K_X$-negative extremal contractions  are divisorial, mapping the exceptional divisors to curves.
Then $K_X+tL_X$ is ample for $t\gg 1$.
\end{theorem}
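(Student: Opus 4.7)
The plan is to proceed by induction on $\rho(X)$, following the strategy of \cite[Proposition 5.2]{CCP08} but adapted to the singular setting.

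\emph{Base case $\rho(X) = 1$.} Since $X$ is uniruled, $K_X$ is not pseudo-effective (via a resolution and BDPP), so $K_X \equiv -\lambda L_X$ for some $\lambda > 0$. Then $-K_X$ is a positive multiple of the strictly nef $L_X$, hence strictly nef, and Theorem \ref{thm_canonical_k} forces $-K_X$ to be ample. Consequently $K_X + tL_X$ is ample for all $t > \lambda$.

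\emph{Inductive step $\rho(X) \geq 2$.} Pick a $K_X$-negative extremal ray (which exists by uniruledness) and let $\pi : X \to Y$ be its contraction. By hypothesis $\pi$ is divisorial, mapping $E := \Exc(\pi)$ onto a curve $C := \pi(E)$. Lemma \ref{lem_canonical_terminal}, parts (1)--(3) and (6), yields that $Y$ is a $\mathbb{Q}$-factorial Gorenstein projective threefold with only isolated canonical singularities, $E$ is Cartier, $\pi$ is the blow-up of the locally complete intersection curve $C$, $K_X = \pi^*K_Y + E$, and $Y$ is uniruled (by composing a uniruling of $X$ with $\pi$). Define $L_Y := \pi_* L_X$, which is $\mathbb{Q}$-Cartier by $\mathbb{Q}$-factoriality; writing $\pi^* L_Y = L_X + cE$ and intersecting with a contracted fiber $f$ yields $c = L_X \cdot f > 0$ (using $E \cdot f = -1$ from Lemma \ref{lem_del14}(3)). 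A projection-formula check using strict transforms of curves in $Y$ confirms that $L_Y$ is strictly nef: for any $C' \subset Y$ with strict transform $\widetilde{C'} \not\subseteq E$ of degree $d$ over $C'$, one has $d\, L_Y \cdot C' = L_X \cdot \widetilde{C'} + c\, E \cdot \widetilde{C'} \geq L_X \cdot \widetilde{C'} > 0$. Since $Y$ inherits the structural hypotheses and $\rho(Y) = \rho(X) - 1$, we split into two cases: either (i) $Y$ again satisfies the hypothesis of the theorem, in which case the inductive hypothesis yields $K_Y + tL_Y$ ample for $t \gg 1$; or (ii) $Y$ admits some $K_Y$-negative elementary contraction of another type (a contraction to a point, to a curve, to a surface, or a divisorial contraction to a point), in which case Theorem \ref{thm_canonical_k}, Proposition \ref{contr_curve_elementary}, Theorem \ref{thm_contr_surface_conic}, or Proposition \ref{prop_Exc_surface_point} respectively yields $K_Y + tL_Y$ ample for $t \gg 1$.

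\emph{Transferring ampleness from $Y$ to $X$.} From $K_X + tL_X = \pi^*(K_Y + tL_Y) - (tc - 1)E$ and Lemma \ref{lem_strict_nef_k}, $K_X + tL_X$ is strictly nef for $t \gg 1$, and by Lemma \ref{lem-big-ample} it remains to establish bigness. Assume for contradiction that $K_X + tL_X$ is not big. By Lemma \ref{lem-not-big-some}, $K_X^i \cdot L_X^{3-i} = 0$ for all $0 \leq i \leq 3$, and there exists $0 \neq \alpha \in \overline{\textup{ME}}(X)$ with $K_X \cdot \alpha = L_X \cdot \alpha = 0$. The push-forward $\beta := \pi_* \alpha$ is nonzero: otherwise $\alpha$ would be a positive multiple of the contracted fiber class $[f]$, contradicting $K_X \cdot \alpha = 0$ since $K_X \cdot f = -1$. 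The projection formula then gives $(K_Y + tL_Y) \cdot \beta = (tc - 1)\, E \cdot \alpha$, and the ampleness of $K_Y + tL_Y$ forces $E \cdot \alpha > 0$ for $t \gg 1$.

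\emph{The main obstacle.} The hard part is converting this set-up into an actual contradiction. The expectation is to combine the vanishings $K_X^i L_X^{3-i} = 0$ (pushed down to $Y$ via the blow-up formula $\pi_*(E^2) = -C$ from Lemma \ref{lem_canonical_terminal}(6)) with the explicit intersection theory on the ruled surface $E = \mathbb{P}(N_{C/Y}^*) \to C$ (in particular the identity $E^3 = K_Y \cdot C + 2 - 2 g(C)$), the ampleness of $K_Y + tL_Y$, the strict positivity $E \cdot \alpha > 0$, and the strict nefness of $L_X$ tested against distinguished curves in $E$ (notably the negative sections of the ruled surface $E \to C$), in order to obtain a numerical impossibility among the quantities $K_Y^i L_Y^{3-i}$, $K_Y \cdot C$, $L_Y \cdot C$, and $E^3$. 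Once this contradiction is established, Lemma \ref{lem-big-ample} yields the ampleness of $K_X + tL_X$ and closes the induction.
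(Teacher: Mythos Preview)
Your inductive scheme has a genuine gap at the step where you claim $L_Y$ is strictly nef. Your projection-formula computation only treats curves $C'\subset Y$ with $C'\neq C=\pi(E)$; for such $C'$ the strict transform $\widetilde{C'}\not\subseteq E$ exists and $E\cdot\widetilde{C'}\ge 0$, so indeed $L_Y\cdot C'>0$. But for $C'=C$ itself there is no strict transform, and if you test $\pi^*L_Y=L_X+cE$ against a (multi)section $\sigma\subset E$ of the ruling $E\to C$, the term $cE\cdot\sigma$ can be very negative. In fact Lemma~\ref{lem_L.C>0} of the paper shows precisely that when $K_X+tL_X$ fails to be ample one has $L_{X'}\cdot C'\le 0$, so $L_{X'}$ is \emph{not} strictly nef in the situation you must handle. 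This breaks both branches of your case split: in case (i) the inductive hypothesis no longer applies to $(Y,L_Y)$, and in case (ii) Propositions~\ref{prop_Exc_surface_point}, \ref{prop_contr_surface_elementary} and Theorem~\ref{thm_contr_surface_conic} all require the divisor on $Y$ to be strictly nef.

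The paper circumvents this by never asserting strict nefness of $L_{X'}$. Instead it chooses $\nu$ to be the minimum of $L_X\cdot\ell_i$ over all $K_X$-negative extremal rays, so that $D_X:=L_X+\nu K_X$ is nef and satisfies $D_X=\varphi^*D_{X'}$; the induction is then organised around the nef (but possibly not strictly nef) divisor $D_{X'}$. One shows $D_{X'}\cdot C'>0$ (Lemma~\ref{lem_D.C>0}), finds a $K_{X'}$-negative $D_{X'}$-trivial extremal ray (Lemma~\ref{lem_not_big_X'}), and proves its contraction is again divisorial with exceptional locus \emph{disjoint} from $C'$ (Lemmas~\ref{lem-sec-con-surface} and~\ref{lem_bir-intersectC}, the latter requiring a delicate case analysis via terminalization and Cutkosky's classification). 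Iterating, the exceptional divisors stay pairwise disjoint until one reaches a Fano fibration, where Theorem~\ref{thm_contr_surface_conic} gives the contradiction. Your ``transferring ampleness'' paragraph and the outlined numerical contradiction are, by comparison, left unfinished; but the decisive issue is the unjustified strict nefness of $L_Y$, which is exactly the obstruction Section~\ref{sec_pf_thm1.6} is built to overcome.
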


\begin{notation}\label{not_bir_surface_curve}
\begin{enumerate}
\item Let $X$ be a normal projective threefold with at worst isolated  $\mathbb{Q}$-factorial Gorenstein canonical singularities, and $L_X$ a strictly nef divisor.
\item In view of Theorem \ref{main_theorem_Goren_ter_3fold} (2), we may assume  $q^\circ(X)=0$.
\item Let $\varphi_i:X\to X_i$ be the contraction of the $K_X$-negative extremal ray $\mathbb{R}_{\ge 0}[\ell_i]$, with the exceptional divisor $E_i$ mapped to a (possibly singular) curve $C_i$ on $X_i$. 
Let $\ell_i\cong\mathbb{P}^1$ be the general fibre such that $K_X\cdot\ell_i=E_i\cdot \ell_i=-1$ (cf.~Lemma \ref{lem_del14}).
\item By Lemma \ref{lem_canonical_terminal}, each $X_i$ also has at worst isolated $\mathbb{Q}$-factorial Gorenstein canonical singularities  with $\rho(X_i)=\rho(X)-1$.
\item Let $I$ be the index recording the elementary $K_X$-negative extremal contractions.
\item Let $\nu:=\min\left\{\frac{L_X\cdot\ell_i}{-K_X\cdot\ell_i}~|~i\in I\subseteq\mathbb{N}\right\}=\min\{L_X\cdot\ell_i~|~i\in I\}$.
Then $D_X:=L_X+\nu K_X$ is nef.
In view of Theorem \ref{thm_canonical_k}, we may assume that $D_X\not\equiv 0$.
\item Let $I_0\subseteq I$ be the subset such that $i\in I_0$ if and only if $L_X\cdot \ell_i=\nu$.
\item Let $\varphi=\varphi_1:X\to X_1=:X'$, $E:=E_1$, $L_{X'}:=\varphi_*L_X$ and $C':=\varphi(E)$ with $1\in I_0$. 
\item Write $K_X=\varphi^*K_{X'}+E$, and $L_X=\varphi^*L_{X'}-\nu E$ (cf.~Lemma \ref{lem_del14}).
Let $D_{X'}:=L_{X'}+\nu K_{X'}$.
Then $D_X=\varphi^*D_{X'}$ and thus $D_{X'}$ is also nef.
\end{enumerate}	
\end{notation}

\textbf{Now we begin to prove Theorem \ref{thm_3fold_surface_curve}. 
If $\rho(X)\le 2$, then our theorem follows from Proposition \ref{contr_curve_elementary}.
Suppose that our theorem holds for the case $\rho(X)\le p$.
We shall assume $\rho(X)=p+1$ in the following.}

\begin{lemma}\label{lem_L.C>0}
Either $K_X+tL_X$ is ample for $t\gg 1$, or $L_{X'}\cdot C'\le 0$.
\end{lemma}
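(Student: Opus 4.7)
The plan is to argue the contrapositive: assume $L_{X'}\cdot C'>0$, and show that $K_X+tL_X$ is ample for $t\gg 1$. First, I will show $L_{X'}$ is strictly nef on $X'$. Observe that $C'=\varphi(E)$ is irreducible, being the image of the irreducible divisor $E$ under a surjective morphism; so any irreducible curve $B\subset X'$ with $B\neq C'$ satisfies $B\not\subset C'$, and its strict transform $\widetilde{B}\subset X$ is not contained in $E$. Using $\varphi^*L_{X'}=L_X+\nu E$ from Notation \ref{not_bir_surface_curve}(9) and the projection formula,
\[
L_{X'}\cdot B = \varphi^*L_{X'}\cdot\widetilde{B} = L_X\cdot\widetilde{B}+\nu\,(E\cdot\widetilde{B}) > 0,
\]
since $L_X\cdot\widetilde{B}>0$ by strict nefness of $L_X$, $E\cdot\widetilde{B}\ge 0$ because $\widetilde{B}\not\subset E$, and $\nu>0$. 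Combined with the standing assumption $L_{X'}\cdot C'>0$, this gives the strict nefness of $L_{X'}$.

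Next, I will invoke the induction. By Notation \ref{not_bir_surface_curve}(4) and Lemma \ref{lem_canonical_terminal}, $X'$ is a $\mathbb{Q}$-factorial Gorenstein normal projective threefold with only isolated canonical singularities, $\rho(X')=\rho(X)-1=p$, and $X'$ is uniruled as $\varphi$ is birational. Either the inductive hypothesis of Theorem \ref{thm_3fold_surface_curve} applies directly to $X'$, or $X'$ admits some $K_{X'}$-negative elementary contraction not of divisorial-to-curve type; in the latter case one of Corollary \ref{coro_q0>0}, Propositions \ref{contr_curve_elementary}, \ref{prop_Exc_surface_point}, \ref{prop_contr_surface_elementary}, or Theorem \ref{thm_contr_surface_conic} applied to $X'$ yields the same conclusion. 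In either case, $K_{X'}+tL_{X'}$ is ample on $X'$ for $t\gg 1$.

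Finally, I will deduce ampleness of $K_X+tL_X$ from the identity $K_X+tL_X=\varphi^*(K_{X'}+tL_{X'})-(t\nu-1)E$. By Lemma \ref{lem_strict_nef_k}, $K_X+tL_X$ is strictly nef for $t\gg 1$; to upgrade to ampleness via Nakai--Moishezon, I will use $\varphi_*E=0$, $\varphi_*(E^2)=-C'$ from Lemma \ref{lem_canonical_terminal}(6), and the fact that $\varphi^*(K_{X'}+tL_{X'})|_E$ is numerically a multiple of the fiber class on the smooth ruled surface $E\to C'$. This yields explicit polynomial formulas for $(K_X+tL_X)^2\cdot E$ and $(K_X+tL_X)^3$ in terms of $(K_{X'}+tL_{X'})\cdot C'$ and $E^3$. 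Combined with the ampleness of $K_{X'}+tL_{X'}$ (which forces $(K_{X'}+tL_{X'})\cdot C'>0$) and the positivity of $L_X|_E$ on the smooth surface $E$, these provide the required Nakai--Moishezon positivity. The main obstacle is this last step: translating ampleness on $X'$ back up to $X$ through the blow-up $\varphi$, which requires a delicate comparison of intersection numbers on $E$ against those on $X'$, using $L_{X'}\cdot C'>0$ in an essential way.
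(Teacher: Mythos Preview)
Your first two steps---showing $L_{X'}$ is strictly nef and invoking the inductive hypothesis (packaged as Remark \ref{remark_conclude_excep_tocurve}) to conclude $K_{X'}+tL_{X'}$ is ample for $t\gg 1$---are exactly what the paper does. The gap is in your last step.

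Nakai--Moishezon requires $(K_X+tL_X)^2\cdot S>0$ for \emph{every} irreducible surface $S$, not only $S=E$, and you do not address $S\neq E$. More seriously, even the top self-intersection fails in the case that matters. Expanding $(K_X+tL_X)^3=(\varphi^*A_t-c_tE)^3$ with $A_t:=K_{X'}+tL_{X'}$ and $c_t:=t\nu-1$, the $t^3$-coefficient is exactly $L_X^3$. Now either $K_X+tL_X$ is already big for large $t$, in which case Lemma \ref{lem-big-ample} finishes immediately and there is nothing to prove; or it is not, in which case Lemma \ref{lem-not-big-some} forces $K_X^3=K_X^2\cdot L_X=K_X\cdot L_X^2=L_X^3=0$, hence $(K_X+tL_X)^3=0$ for all $t$. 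In this regime no intersection-number bookkeeping on $X$ can produce bigness, so your ``delicate comparison'' is not merely delicate but impossible. The hypothesis $L_{X'}\cdot C'>0$ does not by itself move you off the $(K_X+tL_X)^3=0$ locus.

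The paper sidesteps this entirely. It uses the ampleness of $K_{X'}+tL_{X'}$ only to check that $\tfrac{2}{\nu}D_{X'}-K_{X'}$ is nef and big (writing it as $(K_{X'}+\tfrac{3}{2\nu}L_{X'})+\tfrac{1}{2\nu}L_{X'}$ and arguing the first summand is big, else $(K_{X'}+tL_{X'})^3=0$). The base-point-free theorem then makes $D_{X'}$ \emph{semi-ample}. Since $D_X=\varphi^*D_{X'}$ (Notation \ref{not_bir_surface_curve}(9)), $D_X=L_X+\nu K_X$ is semi-ample on $X$, and as $D_X\not\equiv 0$ (Notation \ref{not_bir_surface_curve}(6)) it is numerically equivalent to a non-zero effective divisor. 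Proposition \ref{prop-q-effective} then yields the ampleness of $K_X+tL_X$ directly---no intersection computation on $X$ at all. The missing idea in your approach is precisely this passage through the semi-ampleness of $D_{X'}$ and Proposition \ref{prop-q-effective}.
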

\begin{proof}
Suppose the contrary that $K_X+tL_X$ is not ample for any $t\gg 1$ and $L_{X'}\cdot C'>0$. 
Then, $L_{X'}$ is strictly nef on $X'$ and $K_{X'}+tL_{X'}$ is ample for  $t\gg 1$ by the induction on $\rho(X')$ (cf.~Remark \ref{remark_conclude_excep_tocurve}).
\textbf{We claim that $D_{X'}$ is semi-ample.} 
Consider the following 
$$\frac{2}{\nu}D_{X'}-K_{X'}=2(K_{X'}+\frac{1}{\nu}L_{X'})-K_{X'}=(K_{X'}+\frac{3}{2\nu}L_{X'})+\frac{1}{2\nu}L_{X'}.$$
Since $K_{X'}+\frac{3}{2\nu}L_{X'}=\frac{1}{\nu}D_{X'}+\frac{1}{2\nu}L_{X'}$ with $D_{X'}$ and $L_{X'}$ being nef, it must be big, for otherwise, $D_{X'}^3=D_{X'}^2\cdot L_{X'}=D_{X'}\cdot L_{X'}^2=L_{X'}^3=0$ would imply  $(K_{X'}+tL_{X'})^3=0$, a contradiction.
Hence, $\frac{2}{\nu}D_{X'}-K_{X'}$ is nef and big.
By the base-point-free theorem (cf.~\cite[Theorem 3.3]{KM98}), 
$D_{X'}$ is semi-ample. 
By Notation \ref{not_bir_surface_curve} (6) and Proposition \ref{prop-q-effective}, our $K_X+tL_X$ is ample for $t\gg 1$,  a contradiction.
\end{proof}

\begin{lemma}\label{lem_D.C>0}
Either $K_X+tL_X$ is ample for $t\gg 1$, or $D_{X'}\cdot C'>0$.	
\end{lemma}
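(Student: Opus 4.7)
The plan is to deduce ampleness of $K_X+tL_X$ whenever $D_{X'}\cdot C'\le 0$. Nefness of $D_{X'}$ (Notation~\ref{not_bir_surface_curve}(9)) combined with the hypothesis forces $D_{X'}\cdot C'=0$. I would first translate this vanishing back to $X$: by Lemma~\ref{lem_canonical_terminal}(6) we have $\varphi_*(-E|_E)=C'$, so $\varphi_*(E^2)=-C'$ in $N_1(X')$, and the projection formula yields
\[
D_X\cdot E^2=\varphi^*D_{X'}\cdot E^2=D_{X'}\cdot\varphi_*(E^2)=-D_{X'}\cdot C'=0.
\]
Since $\varphi|_E:E\to C'$ is a ruling with general fibre $\ell_1$ and $D_X|_E=(\varphi|_E)^*(D_{X'}|_{C'})$ is the pullback of a degree-zero divisor on $C'$, it follows that $D_X|_E\equiv 0$ numerically on the ruled surface $E$.

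Suppose now that $K_X+tL_X$ fails to be ample for any $t\gg 1$. By Lemma~\ref{lem-big-ample} no $aK_X+bL_X$ with $a,b\ge 0$ is big; in particular $K_X+tL_X$ is non-big for large $t$, and Lemma~\ref{lem-not-big-some} then yields $K_X^i\cdot L_X^{3-i}=0$ for every $i$, whence $D_X^3=D_X^2\cdot L_X=D_X\cdot L_X^2=0$.

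The aim is to upgrade these vanishings to one of two alternatives. Either $D_X\equiv 0$, in which case $L_X\equiv-\nu K_X$ makes $-K_X$ strictly nef (up to a positive rational multiple), so Theorem~\ref{thm_canonical_k} forces $X$ to be Fano, whence $L_X$ is ample and $K_X+tL_X$ is ample for $t\gg 1$; or $D_X$ is numerically equivalent to a non-zero effective $\mathbb{Q}$-divisor, whence Proposition~\ref{prop-q-effective} delivers ampleness directly. In either situation the desired conclusion holds.

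The hard part will be establishing one of these alternatives from $D_X|_E\equiv 0$, the top-intersection vanishings, and the nefness of $D_X$. My plan is to push the analysis to $X'$: the nef divisor $D_{X'}$ satisfies $D_{X'}\cdot C'=0$ and $D_{X'}^3=0$, and I would attempt to show $D_{X'}$ is semi-ample by checking that $\frac{2}{\nu}D_{X'}-K_{X'}$ is nef and big and applying the base-point-free theorem as in the proof of Lemma~\ref{lem_L.C>0}. The essential difficulty is that $L_{X'}$ need not be strictly nef, since Lemma~\ref{lem_L.C>0} only provides $L_{X'}\cdot C'\le 0$, so the inductive hypothesis of Theorem~\ref{thm_3fold_surface_curve} cannot be applied to $X'$ directly; instead one must continue the MMP on $X'$ through the inherited Gorenstein isolated-canonical structure (Lemma~\ref{lem_canonical_terminal}) and exploit the movable null class $\alpha\in\overline{\textup{ME}}(X)$ furnished by Lemma~\ref{lem-not-big-some} to either force semi-ampleness of $D_{X'}$ or to produce an additional null ray that collapses the Picard number.
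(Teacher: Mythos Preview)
Your setup is fine: nefness of $D_{X'}$ forces $D_{X'}\cdot C'=0$, hence $D_X|_E\equiv 0$, and the vanishings $K_X^i\cdot L_X^{3-i}=0$ follow from Lemma~\ref{lem-not-big-some} as you say. Your dichotomy (either $D_X\equiv 0$ or $D_X$ is numerically a nonzero effective divisor) would indeed finish the proof. The problem is that you never establish either alternative; everything after ``The hard part will be\dots'' is a list of hopes, not an argument. In particular, the base-point-free route you suggest for $D_{X'}$ does not go through here: in Lemma~\ref{lem_L.C>0} the bigness of $\frac{2}{\nu}D_{X'}-K_{X'}$ was obtained \emph{from} the ampleness of $K_{X'}+tL_{X'}$, which in turn required $L_{X'}$ strictly nef. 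Under the present hypothesis you only have $L_{X'}\cdot C'\le 0$, so none of that chain is available, and you acknowledge as much. Invoking the movable class $\alpha$ or ``continuing the MMP'' does not supply the missing step either.

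The paper's argument is of a completely different nature and does not try to prove either branch of your dichotomy. Instead it analyses the exceptional divisor $E$ directly. From $L_{X'}\cdot C'\le 0$ and $D_{X'}\cdot C'=0$ one gets $K_{X'}\cdot C'\ge 0$, so $\varphi^*K_{X'}|_E$ is nef; since $D_X|_E\equiv 0$ makes $-K_X|_E$ strictly nef, so is $-E|_E$. One then passes to the normalization and minimal resolution of $E$, contracts to a ruled surface $\widetilde{E}_m$ over $C'_N$, and uses a Chow-variety deformation count (via $K_E\cdot C_X\le -2$, which needs the Cartier property of $K_X$ and $E$) to show the minimal section deforms, forcing $e\le 0$. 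Comparing $\omega_{\widetilde{E}_m}$ with the push-forward of $q^*\omega_E$ gives the inequality $\alpha+\beta+2g-2-e\le 0$, and together with strict nefness this pins down $g=0$, $e=0$, $\widetilde{E}_m\cong\mathbb{P}^1\times\mathbb{P}^1$, $\alpha=\beta=1$. Hence $\varphi^*K_{X'}|_E\equiv\varphi^*L_{X'}|_E\equiv 0$, so $K_{X'}+tL_{X'}$ is nef; if it were big one argues as in Lemma~\ref{lem_L.C>0}, otherwise $K_{X'}^3=0$. But then $0=K_X^3=(\varphi^*K_{X'}+E)^3=E^3=2$, a contradiction. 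This ruled-surface analysis is the missing idea in your proposal.
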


\begin{proof}
Suppose the contrary that $K_X+tL_X$ is not ample for any $t\gg 1$, and $D_{X'}\cdot C'=0$. 
Then $K_{X'}\cdot C'\ge 0$ (cf.~Lemma \ref{lem_L.C>0}); hence $(K_X-E)|_E=\varphi^*K_{X'}|_E$ is a nef divisor on $E$ (cf.~Lemma \ref{lem_del14} and Notation \ref{not_bir_surface_curve}).
Besides, $D_{X'}\cdot C'=0$ implies that our $D_X|_E\equiv 0$;  hence $L_X|_E\equiv-\nu K_X|_E$. 
So our $-K_X|_E$ is strictly nef on $E$.  
Together with the nefness of $(K_X-E)|_E$,
our $-E|_E$ is also strictly nef on $E$. 
We consider the following  diagram.
\[\xymatrix@C=5em{
\widetilde{E}\ar[r]^\sigma\ar[d]_\tau\ar[dr]^q&E_N\ar[r]^{p_N}\ar[d]^{n_E}&C'_N\ar[d]^{n_{C'}}\\
\widetilde{E}_m\ar[d]&E\ar[r]^p\ar@{^(->}[d]&C'\ar@{^(->}[d]\\
C'_N&X\ar[r]^\varphi&X'
}
\]
Here, $p:=\varphi|_E$, $n_\bullet$ is the normalization, $\sigma$ is the minimal resolution, $\tau$ is the MMP of $\widetilde{E}$, $p_N$ is the induced map and $q:=n_E\circ\sigma$.
Then, $\widetilde{E}_m$ is a ruled surface. 
Denote by $C_0\subseteq \widetilde{E}_m$ the section with the minimal self-intersection $(C_0)^2=-e$, and
$f$ a general fibre of $\widetilde{E}_m$.

As is shown in  Proposition \ref{prop_Q-Goren_surface}, $K_{\widetilde{E}}\sim_{\mathbb{Q}}q^*K_E-F$, where $F$ is an effective divisor with $\varphi(q(F))\subseteq \textup{Sing}\,(C')$ (cf.~\cite[Lemma 5-1-9]{KMM87}, \cite[(4.1)]{Sak84} and Lemma \ref{lem_del14}).
Let $\widetilde{C}\subseteq \widetilde{E}$ be the strict transform of $C_0$  and $C_X:=q(\widetilde{C})$.
Since $C_X\not\subseteq\textup{Sing}\,E$ (cf.~Lemma \ref{lem_del14}),
$q$ is isomorphic in the generic point of $C_X$ and  $\widetilde{C}\not\subseteq\textup{Supp}\,F$. 
Then we have
\begin{equation}\label{equ_deform}
K_{\widetilde{E}}\cdot \widetilde{C}\le K_E\cdot C_X=(K_X+E)\cdot C_X\le -2,	
\end{equation}
since both $K_X$ and $E$ are Cartier (cf.~Lemma \ref{lem_canonical_terminal}). 
By \cite[Chapter \Rmnum{2}, Theorem 1.15]{Kol96},  
$$\dim_{\widetilde{C}}\textup{Chow}(\widetilde{E})\ge -K_{\widetilde{E}}\cdot \widetilde{C}-\chi(\mathcal{O}_{\widetilde{C}_N})\ge -K_{\widetilde{E}}\cdot \widetilde{C}-1\ge1,$$
where  $\widetilde{C}_N$ is the normalization of $\widetilde{C}$.
Then, $\widetilde{C}$ (and hence its push-forward $\tau_*(\widetilde{C})=C_0$) deforms, which  implies $e\le 0$.
Let us consider the following equalities, noting that $-K_X\cdot \ell=-E\cdot \ell=1$ for a general fibre $\ell\subseteq E$ of $\varphi$.
\begin{align*}
	&q^*(-K_X|_E)=\tau^*(C_0+\alpha f)+\sum a_iP_i,\\
	&q^*(-E|_E)=\tau^*(C_0+\beta f)+\sum b_iP_i,\\
	&q^*(-K_E)=\tau^*(2C_0+(\alpha+\beta)f)+\sum(a_i+b_i)P_i,\\
	&q^*((K_X-E)|_E)=\tau^*((\beta-\alpha)f)+\sum(b_i-a_i)P_i,
\end{align*}
with $P_i$ being $\tau$-exceptional.
On the one hand, the strict nefness of $-K_X|_E$ and $-E|_E$ gives  that $q^*(-K_X|_E)\cdot \tau^*(C_0)>0$ and $q^*(-E|_E)\cdot \tau^*(C_0)>0$; hence $\alpha-e>0$ and $\beta-e>0$.
On the other hand, since $\varphi^*K_{X'}|_E=(K_X-E)|_E$ is nef, we have $\beta-\alpha=q^*((K_X-E)|_E)\cdot \tau^*(C_0)\ge \varphi^*K_{X'}|_E\cdot C_X\ge 0$  \textbf{(\dag)}.

Since $K_{\widetilde{E}}\sim_{\mathbb{Q}}q^*K_E-F$,
there is an inclusion of the canonical sheaf $\omega_{\widetilde{E}}\subseteq q^*(\omega_E)$ as shown in Proposition \ref{prop_Q-Goren_surface} (cf.~ \cite[Lemma 5-1-9]{KMM87} and \cite[(4.1)]{Sak84}); hence  we have 
$\omega_{\widetilde{E}_m}=\tau_*(\omega_{\widetilde{E}})\subseteq (\tau_*q^*(\omega_E))^{\vee\vee}$. 
Consequently, we get the following inequality 
$$-2C_0-(\alpha+\beta)f\ge -2C_0+(2g-2-e)f,$$ 
with $g=g(C_0)$ being the genus.  
So $\alpha+\beta+2g-2-e\le 0$ \textbf{(\dag\dag)}.
Furthermore,  $C_0+\alpha f$  being  strictly nef on $\widetilde{E}_m$ by the projection formula, 
 we  have $(C_0+\alpha f)^2\ge0$. 
This gives that $\alpha\ge e/2$.
Together with \textbf{(\dag)} and \textbf{(\dag\dag)}, our $g\le 1$.

It is known that strictly nef divisors on (minimal) ruled surfaces over curves of genus $\le 1$ are indeed ample (cf.\,e.g.\,\cite[Example 1.23 (1)]{KM98}).
Therefore, $(C_0+\alpha f)^2>0$, and thus $\beta\ge \alpha>e/2$ (cf.~\textbf{(\dag)}).
Return back to \textbf{(\dag\dag)}, we have $g<1$.
So $g=0$ and $C_0\cong\mathbb{P}^1$.
Since $C_0$ deforms, we have $e=0$, $\widetilde{E}_m\cong\mathbb{P}^1\times\mathbb{P}^1$ and $\alpha+\beta\le 2$ (cf.~\textbf{(\dag\dag)}). 
Since  $K_X$ is Cartier, both $\alpha$ and $\beta$ are positive integers; hence $\alpha=\beta=1$.
Then we have $\textbf{(*)}:$ $\varphi^*K_{X'}|_E=(K_X-E)|_E\equiv 0$ (cf.~\textbf{(\dag)}), and thus $\varphi^*L_{X'}|_E\equiv 0$.

\textbf{We claim  that $\tau$ is an isomorphism.} 
Suppose  $\tau^*(C_0)=\widetilde{C}+\sum P_i$ with $P_i$ being $\tau$-exceptional.
Here, our $C_0$ can be chosen as any horizontal section containing some blown-up points of $\tau$ since $\widetilde{E}_m\cong\mathbb{P}^1\times\mathbb{P}^1$.
Then it follows from the projection formula that $\widetilde{C}^2<C_0^2=0$, a contradiction to  Equation (\ref{equ_deform}). 
So $\tau$ is isomorphic as claimed.

Now that $L_{X'}$ is nef, for $t\gg 1$, $K_{X'}+tL_{X'}$ is nef by the projection formula, noting that $\varphi^*(K_{X'}+tL_{X'})=(K_X+tL_X)+(\nu t-1)E$ and $\varphi^*K_{X'}|_E\equiv \varphi^*L_{X'}|_E\equiv 0$ as shown in $\textbf{(*)}$.
If $K_{X'}+tL_{X'}$ is  big for $t\gg 1$, then 
with the same proof of Lemma \ref{lem_L.C>0}, our $K_X+tL_X$ is ample, contradicting our assumption.
So we have $(K_{X'}+tL_{X'})^3=0$ for any $t\gg 1$. 
This in turn implies $K_{X'}^3=0$. 
On the other hand, we note that $K_X^3=0$ (cf.~Lemma \ref{lem-big-ample}). 
So we  get a contradiction (cf.~$\textbf{(*)}$ and Lemma \ref{lem_canonical_terminal} (6)):
$$0=K_X^3=(\varphi^*K_{X'}+E)^3=E^3=(q^*(E|_E))^2=(C_0+f)^2=2.$$
So our lemma is proved.
\end{proof}

\begin{lemma}\label{lem_intersect_C_<-1}
Suppose that $K_X+tL_X$ is not ample for any $t\gg 1$.
Then, for any curve $B'\subseteq X'$ such that $D_{X'}\cdot B'=0$, we have $K_{X'}\cdot B'\le -1$.
In particular, if $B'\cap C'\neq\emptyset$ (as sets), then $K_{X'}\cdot B'\le -2$.
\end{lemma}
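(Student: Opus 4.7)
The plan is to pull $B'$ back to $X$, use the strict nefness of $L_X$ together with the Gorenstein (hence Cartier) property of $K_X$ to force an integer estimate on the strict transform of $B'$, and then transfer that inequality back to $X'$ via the relation $K_X = \varphi^*K_{X'} + E$.

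First I would observe that $B' \neq C'$: Lemma~\ref{lem_D.C>0} gives $D_{X'}\cdot C' > 0$, whereas $D_{X'}\cdot B' = 0$ by hypothesis. Hence $B'\not\subseteq C'$, so the strict transform $B\subseteq X$ is a well-defined irreducible curve with $B\not\subseteq E$ and with $\varphi|_B$ birational onto $B'$; in particular $\varphi_*B = B'$. The projection formula combined with $D_X = \varphi^*D_{X'}$ (Notation~\ref{not_bir_surface_curve}~(9)) then yields $D_X\cdot B = D_{X'}\cdot B' = 0$. Writing $D_X = L_X + \nu K_X$ and using the strict nefness of $L_X$ (so $L_X\cdot B > 0$) together with $\nu > 0$, we obtain $K_X\cdot B < 0$; since $X$ is Gorenstein and $B$ is an integral curve, $K_X\cdot B$ is a negative integer, and so $K_X\cdot B \le -1$.

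The last step is the identity
\[
K_{X'}\cdot B' = \varphi^*K_{X'}\cdot B = K_X\cdot B - E\cdot B,
\]
which follows from the projection formula together with $K_X = \varphi^*K_{X'} + E$. By Lemma~\ref{lem_canonical_terminal}~(6), $E$ is an effective Cartier divisor, and since $B\not\subseteq E$ we get $E\cdot B \in \mathbb{Z}_{\ge 0}$; this already yields $K_{X'}\cdot B' \le -1$. For the ``in particular'' clause, any set-theoretic point of $B'\cap C'$ lifts to some point of $B\cap \varphi^{-1}(C') = B\cap E$, so $B\cap E \neq \emptyset$, and since $E$ is an effective Cartier divisor not containing $B$, this forces $E\cdot B \ge 1$, whence $K_{X'}\cdot B' \le -2$. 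I do not foresee a real obstacle: the argument is essentially an application of the projection formula plus integrality of Cartier intersection numbers, and the only delicate point — that the strict transform $B$ satisfies $\varphi_*B = B'$ and $B\not\subseteq E$ — is guaranteed by $B'\not\subseteq C'$, which is precisely where Lemma~\ref{lem_D.C>0} enters.
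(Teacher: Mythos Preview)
Your argument is correct and matches the paper's proof essentially step for step: both use Lemma~\ref{lem_D.C>0} to get $B'\neq C'$, pass to the strict transform, use $D_X=\varphi^*D_{X'}$ and the Cartier property of $K_X$ to obtain $K_X\cdot B\le -1$, and then transfer via $K_X=\varphi^*K_{X'}+E$ with $E\cdot B\ge 0$ (respectively $\ge 1$). The only difference is that you spell out a few details (e.g.\ $\varphi_*B=B'$, the Cartier property of $E$) that the paper leaves implicit.
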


\begin{proof}
By Lemma \ref{lem_D.C>0}, $B'\neq C'$.
Denote by $\hat{B'}$ the $\varphi$-proper transform of $B'$ on $X$.	
Then $D_X\cdot \hat{B'}=0$ and thus $K_{X}\cdot \hat{B'}\le -1$, noting that $L_X$ is strctly nef and  $K_X$ is Cartier (cf.~Lemma \ref{lem_canonical_terminal}).
Since $E\cdot \hat{B'}\ge 0$, we see that $K_{X'}\cdot B'=(K_X-E)\cdot \hat{B'}\le -1$.
In particular, if $B'\cap C'\neq\emptyset$, then $E\cdot \hat{B'}\ge 1$, and hence $K_{X'}\cdot \hat{B'}\le -2$. 
\end{proof}

\begin{lemma}\label{lem_not_big_X'}
Either $K_X+tL_X$ is ample for $t\gg 1$, or $D_{X'}$ is not strictly nef.
In the latter case, there is an extremal ray $\mathbb{R}_{\ge 0}[\ell']$ on $X'$ such that $K_{X'}\cdot\ell'<0$ and $D_{X'}\cdot \ell'=0$.
\end{lemma}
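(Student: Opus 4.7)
The plan is to argue by contrapositive: assume $K_X+tL_X$ is not ample for any $t\gg 1$, and show that $D_{X'}$ fails to be strictly nef, from which the required extremal ray will follow. First I would suppose for contradiction that $D_{X'}$ is strictly nef on $X'$. Since $\rho(X')=p<p+1$, the pair $(X',D_{X'})$ falls under the inductive step: applying either the induction hypothesis of Theorem~\ref{thm_3fold_surface_curve} (when every $K_{X'}$-negative elementary extremal contraction on $X'$ is divisorial, mapping its exceptional divisor to a curve) or one of Propositions~\ref{contr_curve_elementary}, \ref{prop_Exc_surface_point}, \ref{prop_contr_surface_elementary}, or Theorem~\ref{thm_contr_surface_conic} (in case $X'$ admits a $K_{X'}$-negative contraction of a different type), one obtains that $K_{X'}+tD_{X'}$ is ample for $t\gg 1$.

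Fixing one such $t$, pulling back by $\varphi$ and using Notation~\ref{not_bir_surface_curve} (9) gives
$$\varphi^*(K_{X'}+tD_{X'})=(K_X-E)+tD_X=(1+t\nu)K_X+tL_X-E,$$
which is nef and big as the pullback of an ample divisor under a birational morphism. Adding the effective divisor $E$ preserves bigness, so $(1+t\nu)K_X+tL_X$ is big and hence $K_X+\tfrac{t}{1+t\nu}L_X$ is big; since $L_X$ is nef, bigness propagates to $K_X+rL_X$ for every $r\ge\tfrac{t}{1+t\nu}$. Combined with Lemma~\ref{lem_strict_nef_k}, $K_X+rL_X$ is both strictly nef and big for all sufficiently large $r$, and Lemma~\ref{lem-big-ample} then makes it ample for $r\gg 1$, contradicting our initial assumption. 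Therefore $D_{X'}$ is not strictly nef.

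To exhibit the extremal ray, since $D_{X'}$ is nef but not strictly nef, pick an irreducible curve $B'\subseteq X'$ with $D_{X'}\cdot B'=0$; by Lemma~\ref{lem_intersect_C_<-1}, $K_{X'}\cdot B'\le-1$. Decompose $[B']=\beta_0+\sum_j c_j[R_j]$ via the cone theorem \cite[Theorem~3.7]{KM98}, with $\beta_0\in\overline{\textup{NE}}(X')_{K_{X'}\ge 0}$ and each $R_j$ a $K_{X'}$-negative extremal rational curve. Since $K_{X'}\cdot B'<0$ some $c_j>0$, and since each summand of $D_{X'}\cdot\beta_0+\sum_jc_jD_{X'}\cdot R_j=0$ is non-negative by the nefness of $D_{X'}$, every summand vanishes; picking $j$ with $c_j>0$ yields the desired extremal ray $\mathbb{R}_{\ge 0}[\ell']:=\mathbb{R}_{\ge 0}[R_j]$. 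I expect the main obstacle to be the first step: showing that on $X'$, whose Picard number has dropped but whose MMP structure may a priori differ from that of $X$, one can still produce ampleness of $K_{X'}+tD_{X'}$ by invoking either the induction hypothesis or the correct Mori-type proposition.
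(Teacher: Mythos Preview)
Your proof is correct and follows essentially the same route as the paper's: assume $D_{X'}$ strictly nef, apply the inductive hypothesis together with Remark~\ref{remark_conclude_excep_tocurve} to get $K_{X'}+tD_{X'}$ ample, pull back and add $E$ to obtain bigness of $K_X+t_0D_X$ (the paper writes this directly as $K_X+t_0D_X=\varphi^*(K_{X'}+t_0D_{X'})+E$, which is your identity rearranged), and then conclude via Lemma~\ref{lem-big-ample}. Your treatment of the extremal ray via the cone-theoretic decomposition is exactly the argument the paper compresses into one sentence.
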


\begin{proof}
We may assume that $K_X+tL_X$ is not ample for any $t\gg 1$. 
Suppose that $D_{X'}$ is  strictly nef.
By induction (and Remark \ref{remark_conclude_excep_tocurve}), $K_{X'}+t_0D_{X'}$ is ample for any (fixed) $t_0\gg 1$.
But then, $K_X+t_0D_X=\varphi^*(K_{X'}+t_0D_{X'})+E$ is big; thus $K_X+uL_X=\frac{1}{1+t_0\nu}(K_X+t_0D_X)+(u-\frac{t_0}{t_0\nu+1})L_X$ is also big for  $u\gg 1$, a contradiction (cf.~Lemma \ref{lem-not-big-some}). 
Hence, there is an irreducible curve $B'\in\overline{\textup{NE}}(X')$ such that $D_{X'}\cdot B'=0$.
By Lemma \ref{lem_intersect_C_<-1},
$K_{X'}\cdot B'<0$.
Since $D_{X'}$ is nef, by the cone theorem (cf.~\cite[Theorem 3.7]{KM98}), there exists a $K_{X'}$-negative extremal curve $\ell'$ such that $K_{X'}\cdot\ell'<0$ and $D_{X'}\cdot\ell'=0$. 
\end{proof}

\begin{lemma}\label{lem-sec-con-surface}
If $K_X+tL_X$ is not ample for any $t\gg 1$, then the contraction $\varphi':X'\to X''$ of $\mathbb{R}_{\ge 0}[\ell']$ in Lemma \ref{lem_not_big_X'} is birational.	
\end{lemma}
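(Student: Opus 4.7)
The plan is to argue by contradiction, assuming $\varphi':X'\to X''$ is a Mori fiber space, so $\dim X''<3$. Because we are in the inductive step with $\rho(X)=p+1$ and $p\ge 2$, we have $\rho(X')=p\ge 2$, which rules out $\dim X''=0$; hence $\dim X''\in\{1,2\}$. The key initial observation is that $D_{X'}$ is nef with $D_{X'}\cdot\ell'=0$, so the base-point-free/contraction theorem applied to $\varphi'$ gives $D_{X'}\equiv_{\mathbb{Q}}(\varphi')^{*}N$ for some nef $\mathbb{Q}$-Cartier divisor $N$ on $X''$. By Lemma \ref{lem_D.C>0} we have $D_{X'}\cdot C'>0$, so $N\not\equiv 0$ and, in particular, $C'$ is not $\varphi'$-contracted.

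For the case $\dim X''=1$, the base is a smooth projective curve, so any non-zero nef $\mathbb{Q}$-divisor has positive degree and is therefore numerically equivalent to a non-zero effective $\mathbb{Q}$-divisor. Pulling this back, $D_X=\varphi^{*}D_{X'}$ is numerically equivalent to a non-zero effective divisor on $X$; since $D_X=L_X+\nu K_X$ and $(X,0)$ is a $\mathbb{Q}$-factorial dlt pair (because $X$ is canonical), Proposition \ref{prop-q-effective} forces $K_X+tL_X$ to be ample for $t\gg 1$, contradicting our hypothesis.

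For the case $\dim X''=2$, I will apply Theorem \ref{thm_contr_surface_conic} to the composition $\psi:=\varphi'\circ\varphi:X\to X''$. The items to verify are: that $\psi$ is a contraction, which is immediate from composing $\varphi$ and $\varphi'$; that $\psi$ is equi-dimensional with one-dimensional fibers, which follows because the Mori conic bundle $\varphi'$ has one-dimensional fibers everywhere, and $\varphi^{-1}$ of any such fiber $F'$ is the strict transform $\widehat{F'}$ together with finitely many fibers of $E\to C'$ over the finite set $F'\cap C'$, each of which is again one-dimensional; that $\psi$ is $K_X$-negative, which follows at once from $D_X\equiv\psi^{*}N$ being zero on every $\psi$-contracted curve $C$, since this yields $K_X\cdot C=-\tfrac{1}{\nu}L_X\cdot C<0$ by strict nefness of $L_X$; and that the pull-back condition $K_X+\tfrac{1}{\nu}L_X\equiv\psi^{*}(N/\nu)$ holds, which is just a rescaling of $D_X\equiv\psi^{*}N$. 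Theorem \ref{thm_contr_surface_conic} then makes $K_X+tL_X$ ample, the desired contradiction.

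The main difficulty lies in the second case, in justifying that $\psi$---which is in general non-elementary, its contracted face being spanned by a $\varphi$-fiber $\ell_1$ and a lift of $\ell'$---satisfies the exact hypotheses of Theorem \ref{thm_contr_surface_conic}; this reduces essentially to the equi-dimensionality analysis sketched above, together with routine bookkeeping to pass from the numerical pull-back relation to an actual equality of $\mathbb{Q}$-Cartier divisors on $X$.
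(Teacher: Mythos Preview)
Your proposal is correct and follows essentially the same route as the paper: argue by contradiction, descend $D_{X'}$ to a nef class on $X''$, dispose of the curve base via Proposition \ref{prop-q-effective}, and handle the surface base by verifying that the composite $\varphi'\circ\varphi$ satisfies the hypotheses of Theorem \ref{thm_contr_surface_conic} (equi-dimensionality from $C'$ not being $\varphi'$-contracted, $K_X$-negativity from $D_X=\psi^*N$, and extremality from the pullback of an ample class on $X''$). The paper additionally peels off the sub-case $\dim X''=2$, $D_{X''}^2\neq 0$ and treats it directly via Proposition \ref{prop-q-effective}, but as your argument implicitly shows this split is unnecessary since Theorem \ref{thm_contr_surface_conic} does not require $M^2=0$.
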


\begin{proof}
Suppose the contrary that $\dim X''\le 2$.
By the cone theorem (cf.~\cite[Theorem 3.7]{KM98}), $D_{X'}=\varphi'^*(D_{X''})$ for some nef divisor $D_{X''}$ on $X''$.
If $X''$ is a point, then $\rho(X')=1$ and $D_{X'}$ is ample, a contradiction to Proposition \ref{prop-q-effective}.
If $X''$ is a curve or  $\dim X''=2$ and $D_{X''}^2\neq 0$ (and hence big),
then $D_{X'}$ (and hence $D_X$) is numerically equivalent to a non-zero effective divisor, contradicting  Proposition \ref{prop-q-effective} again.

Suppose that $\dim X''=2$ and $D_{X''}^2=0$. 
We claim that the composite $\varphi'\circ\varphi:X\to X''$ is an equi-dimensional $K_X$-negative contraction (of an extremal face); thus we get a contradiction to our assumption by Theorem \ref{thm_contr_surface_conic}. 
First, by the cone theorem (cf.~\cite[Theorem 3.7]{KM98}), $\varphi$ is equi-dimensional (of relative dimension one).
Since $C'$ is not contracted by $\varphi'$ (cf.~Lemma \ref{lem_D.C>0}), the composite $\varphi'\circ\varphi$ is also equi-dimensional. 
Take any irreducible curve $F$ contracted by $\varphi'\circ\varphi$.
Then $D_X\cdot F=(\varphi'\circ\varphi)^*(D_{X''})\cdot F=0$ and thus $K_X\cdot F<0$.
Hence, $-K_X$ is $(\varphi'\circ\varphi)$-ample  (cf.~\cite[Theorem 1.42]{KM98}) and 
$\varphi'\circ\varphi$ is a $K_X$-negative contraction.
Finally, the contraction $X\to X''$ is  clearly extremal by 
considering the pullback of any ample divisor on $X''$. 
\end{proof}

The following lemma is a bit technical. 
We divide the proof into several cases for readers' convenience. 
We shall heavily apply the terminalization and Lemma \ref{lem_canonical_terminal}.
Recall that the \textit{length} of a $K_X$-negative extremal contraction is defined to be the minimum of $-K_X\cdot B$ for generic curves $B$ in the covering families of contracted locus.

\begin{lemma}\label{lem_bir-intersectC}
If $K_X+tL_X$ is not ample for any $t\gg 1$, then the contraction $\varphi':X'\to X''$ of $\mathbb{R}_{\ge 0}[\ell']$ in Lemma \ref{lem_not_big_X'} is  divisorial  with the exceptional divisor  $E'$ such that   $E'\cap C'=\emptyset$.
\end{lemma}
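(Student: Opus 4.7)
My plan is first to establish that $\varphi'$ is divisorial, and then to argue by contradiction that $E'\cap C'=\emptyset$.

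\emph{Divisoriality of $\varphi'$.} By Lemmas~\ref{lem_not_big_X'} and~\ref{lem-sec-con-surface}, $\varphi'$ is a birational elementary $K_{X'}$-negative extremal contraction. Applying Lemma~\ref{lem_canonical_terminal}~(3),(6) to the earlier $\varphi:X\to X'$ (with $\varphi(E)=C'$ a curve) yields that $X'$ has only $\mathbb{Q}$-factorial isolated Gorenstein canonical singularities. I would then apply Lemma~\ref{lem_canonical_terminal} to $\varphi'$ itself: the lift $\widetilde{\varphi'}:\widetilde{X'}\to\widetilde{X''}$ on the terminalization $\widetilde{X'}$ is elementary and divisorial. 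Since $\tau_1:\widetilde{X'}\to X'$ is small (crepant) while $\textup{Exc}(\widetilde{\varphi'})$ is a prime divisor on $\widetilde{X'}$, its image in $X'$ remains a prime divisor $E'\subseteq X'$, so $\varphi'$ is divisorial with exceptional divisor $E'$.

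\emph{Setup of the contradiction.} Suppose for contradiction that $E'\cap C'\neq\emptyset$. Let $\widehat{E'}:=\varphi^{-1}_*(E')$ be the strict transform on $X$, a prime divisor distinct from $E$, and set $\psi:=\varphi'\circ\varphi:X\to X''$. For a generic member $\ell_*$ of the covering family of $\psi$-contracted curves on $\widehat{E'}$, the image $\varphi(\ell_*)$ is a generic fiber of $E'\to\varphi'(E')$ and thus lies in the ray $\mathbb{R}_{\ge 0}[\ell']$; consequently
\[
D_X\cdot\ell_* \;=\; \varphi^*D_{X'}\cdot\ell_* \;=\; D_{X'}\cdot\varphi_*(\ell_*)\;=\;0.
\]
Writing $K_X=\varphi^*K_{X'}+E$ from Notation~\ref{not_bir_surface_curve}(9) then gives $K_X\cdot\ell_*=K_{X'}\cdot\varphi_*(\ell_*)+E\cdot\ell_*$.

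\emph{Deriving the contradiction.} I would split into two cases according to whether $\ell_*$ meets $E$ for generic $\ell_*$. In the easier \emph{Case (a)}, $\ell_*\cap E=\emptyset$ generically; then $K_X\cdot\ell_*=K_{X'}\cdot\varphi_*(\ell_*)<0$ and $L_X\cdot\ell_*=-\nu K_X\cdot\ell_*$, so $[\ell_*]$ realizes the minimum defining $I_0$ in Notation~\ref{not_bir_surface_curve}(6)--(7). Under the standing hypothesis of Theorem~\ref{thm_3fold_surface_curve}, the extremal contraction of $[\ell_*]$ is divisorial with exceptional divisor $\widehat{E'}$ mapped onto a curve; replacing $\varphi$ by this alternative ray-contraction and re-running the analogues of Lemmas~\ref{lem_L.C>0}--\ref{lem_not_big_X'} should produce the required contradiction with the non-ampleness of $K_X+tL_X$ (essentially, $D_X$ would then vanish on a distinguished curve on both sides of a different birational contraction, forcing $D_X\equiv 0$ against Notation~\ref{not_bir_surface_curve}(6)). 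In \emph{Case (b)}, $\ell_*\cap E\neq\emptyset$ generically, so $\varphi_*(\ell_*)$ meets $C'$, and Lemma~\ref{lem_intersect_C_<-1} gives $K_{X'}\cdot\varphi_*(\ell_*)\le -2$. Here I would combine the Cartier property of both $E$ and $K_X$ (Lemma~\ref{lem_canonical_terminal}(6)), the ruled-surface geometry of $E$ developed in the proof of Lemma~\ref{lem_D.C>0}, and the triple-intersection vanishings $K_X^i\cdot L_X^{3-i}=0$ from Lemma~\ref{lem-not-big-some}, to intersect $\widehat{E'}\cdot E$ against a suitable nef test divisor and force a strict positivity contradicting those vanishings.

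The main obstacle is Case~(b): the intersection-theoretic bookkeeping along $\widehat{E'}\cap E$, which contains the $\varphi$-fibers over $E'\cap C'$, is delicate and must carefully exploit the isolated-Gorenstein structure of $X$ together with the ruled-surface normalization of $E$; this is the technical heart of the argument.
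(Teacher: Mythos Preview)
Your proposal has genuine gaps in both cases, and the overall decomposition does not match the structure needed to close the argument.

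\emph{Divisoriality.} Your claim that the terminalization $\tau_1:\widetilde{X'}\to X'$ is ``small'' is false: since $X'$ is $\mathbb{Q}$-factorial, $\textup{Exc}(\tau_1)$ is pure codimension one over every non-terminal point. The divisoriality of $\varphi'$ follows directly from Lemma~\ref{lem_del14} applied to $X'$ (which has isolated $\mathbb{Q}$-factorial Gorenstein canonical singularities by Lemma~\ref{lem_canonical_terminal}(6)); no detour through $\widetilde{X'}$ is needed.

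\emph{Case (a).} The statement that ``$[\ell_*]$ realizes the minimum defining $I_0$'' does not make $[\ell_*]$ an extremal ray on $X$: extremality of $\ell'$ on $X'$ does not transfer to its strict transform on $X$, so there is no ``extremal contraction of $[\ell_*]$'' to invoke. Even granting extremality, ``re-running Lemmas~\ref{lem_L.C>0}--\ref{lem_not_big_X'}'' gives no contradiction; those lemmas produce a second contraction, not $D_X\equiv 0$. The paper handles the corresponding geometry (roughly, $\varphi'(E')$ a curve and $E'\cap C'$ finite) by choosing a \emph{specific} fibre of $E'\to\varphi'(E')$ through a point of $E'\cap C'$ (not a generic one) and showing its $K_{X'}$-degree is $-1$ from the length of $\varphi'$, contradicting Lemma~\ref{lem_intersect_C_<-1}.

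\emph{Case (b).} Here you have essentially no argument. The paper's proof proceeds by first separating $C'\subseteq E'$ (quickly dispatched: $C'$ must be horizontal, so generic fibres meet $C'$, and Lemma~\ref{lem_del14}(3) gives length $1$, contradicting Lemma~\ref{lem_intersect_C_<-1}) from $E'\cap C'$ finite. In the latter it passes to the terminalization and applies Cutkosky's classification of elementary contractions on Gorenstein terminal threefolds to the lift $\phi'$. When $\phi'$ has length $1$ (to a point or to a curve), one again finds a curve in $E'$ through $E'\cap C'$ of $K_{X'}$-degree $-1$, contradicting Lemma~\ref{lem_intersect_C_<-1}. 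The genuinely hard case is $E_{Y'}\cong\mathbb{P}^2$ with normal bundle $\mathcal{O}(-1)$ (length $2$). There the paper shows $E'\cap C'$ is a single smooth point of $C'$, so $\widehat{E'}\cong\mathbb{F}_1$, and constructs a further contraction $\psi_X:X\to W$ of $\widehat{E'}$ to a curve. One must then prove $W$ is projective (via a resolution $Z_0$ and a currents argument \`a la Peternell--Siu), check that the pushforward $L_W$ is strictly nef, and finally derive a contradiction by the induction hypothesis as in Lemma~\ref{lem_L.C>0}. None of this is captured by ``intersecting $\widehat{E'}\cdot E$ against a suitable nef test divisor''; the triple-intersection vanishings from Lemma~\ref{lem-not-big-some} play no role in this step.
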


\begin{proof}
By Lemmas \ref{lem-sec-con-surface} and \ref{lem_del14},  our $\varphi'$ is  a divisorial contraction.
In the following, we shall discuss case-by-case in terms of $E'$ and the intersection of $E'\cap C'$.
\par \vskip 0.4pc \noindent
\textbf{Case (1). Suppose $C'\subseteq E'$.}  
Then $C'$ being rigid and $D_{X'}\cdot C'>0$ (cf.~Lemmas \ref{lem_L.C>0} and \ref{lem_D.C>0}) would imply that $\varphi'$ is a blow-up of a curve on $X''$ (cf.~Lemma \ref{lem_canonical_terminal}) with $C'$ being  horizontal  on $E'\subseteq X'$. 
Let $\ell'$ be a general fibre of $\varphi'$. 
Since $D_{X'}\cdot \ell'=0$ and $\ell' \cap C'\neq\emptyset$, we have $K_{X'}\cdot\ell'\le -2$ (cf.~Lemma \ref{lem_intersect_C_<-1}),  contradicting Lemma \ref{lem_del14} (3).

\par \vskip 0.4pc \noindent
\textbf{Case (2). Suppose  $E'\cap C'$ is a finite non-empty  set.} 
Then by Lemma \ref{lem_canonical_terminal}, we have the following commutative diagram
\begin{align}\label{diagram_4}\tag{$*$}
\xymatrix{Y'\ar[r]^{\phi'}\ar[d]_{\tau'}&Y''\ar[d]^{\tau''}\\
X'\ar[r]_{\varphi'}&X''
}	
\end{align}
where $\tau'$ is the crepant terminalization, $\phi'$ is a divisorial contraction, and $Y'$ has only $\mathbb{Q}$-factorial Gorenstein terminal singularities.

\par \vskip 0.4pc \noindent
\textbf{(2i).} Suppose $\phi'$ maps the exceptional divisor $E_{Y'}$ to a point  and the length of  $\phi'$ is one (this is the case when \cite[Theorem 5 (2), (3) or (4)]{Cut88} happen). 
By Lemma \ref{lem_canonical_terminal}, $E'\cong E_{Y'}$.  
Then we can pick a  
curve $\ell'\subseteq E'\subseteq X'$ of $\varphi'$ meeting $C'$ such that $K_{X'}\cdot\ell'=-1$ (noting that $\tau'$ is crepant) and $D_{X'}\cdot \ell'=0$. 
However, this contradicts Lemma \ref{lem_intersect_C_<-1}.

\par \vskip 0.4pc \noindent 
\textbf{(2ii).} Suppose  $\phi'$ maps the exceptional divisor $E_{Y'}$ onto a curve   (and hence the length of  $\phi'$ is still one, which is the case when \cite[Theorem 4]{Cut88} happens).
Let $P\in E'\cap C'$.
If $P$ is a terminal point, then we 
pick a fibre $\ell_{Y'}\subseteq E_{Y'}$ passing through $\tau'^{-1}(P)$.
If $P$ is not a terminal point, then we take a curve $c_0\subseteq \tau'^{-1}(P)\cap E_{Y'}$, which is a horizontal curve of $\phi'$; in this case, we pick $\ell_{Y'}$ to be any fibre of $\phi'$, which automatically intersects with $c_0$.
In both cases, let $\ell':=\tau'(\ell_{Y'})\ni P$.  
Similarly, $K_{X'}\cdot\ell'=\tau'^*K_{X'}\cdot \ell_{Y'}=K_{Y'}\cdot \ell_{Y'}=-1$, which contradicts Lemma \ref{lem_intersect_C_<-1} again.

\par \vskip 0.4pc \noindent
\textbf{(2iii).} By the classification, we may assume that $\phi'$ maps the exceptional divisor $E_{Y'}$ to a point and the length of $\phi'$ is two (this is the case when \cite[Theorem 5 (1)]{Cut88} happens). 
In this case, $E_Y'\cong\mathbb{P}^2$ with $E_{Y'}|_{E_{Y'}}=\mathcal{O}(-1)$, $Y''$ (and hence $Y'$) is  smooth, and $\varphi'(E')$ is a point. 
Consider the following  (by taking $\tau_2:=\tau'$ and $\tau_3:=\tau''$ in the diagram (\ref{diagram_4})):
\begin{align}\label{diagram_5}\tag{$**$}
\xymatrix{Z\ar[d]_{\tau_0}&Y\ar[r]^{\phi}\ar[l]_{\psi_Y}\ar[d]_{\tau_1}&Y'\ar[r]^{\phi'}\ar[d]^{\tau_2}&Y''\ar[d]^{\tau_3}\\
W&X\ar[l]^{\psi_X}\ar[r]_\varphi &X'\ar[r]_{\varphi'}&X''
}	
\end{align}

\textbf{We first explain in the following how to get the  diagram (\ref{diagram_5}).}
By Lemma \ref{lem_canonical_terminal}, we get the diagrams $\tau_2\circ\phi=\varphi\circ\tau_1$ and $\tau_3\circ\phi'=\varphi'\circ\tau_2$,
noting that $\textup{Exc}(\tau_1)$ is disjoint with $E_Y:=\textup{Exc}(\phi)$ and $\textup{Exc}(\tau_2)$ is  disjoint with $E_{Y'}:=\textup{Exc}(\phi')$.
This implies that $E'\cong E_{Y'}=\mathbb{P}^2$ and  $X'$ is smooth around $E'$.
Fixing a line $\ell'\subseteq E'$ of $\varphi'$ meeting $C'$ and taking $\hat{\ell'}$ to be its proper transform on  $X$,  we have
$K_{X'}\cdot \ell'=-2$.
Let $a:=\hat{\ell'}\cdot E\ge 1$. 
Then $K_X\cdot\hat{\ell'}=-2+a$ and $L_X\cdot\hat{\ell'}=\nu(2-a)>0$.
In particular, $a=1$ and $K_X\cdot\hat{\ell'}=-1$.
\textbf{This also implies that $E'$  meets $C'$ along a  single (smooth) point of $C'$.}
Indeed, if $\sharp (E'\cap C')\ge 2$, then the line $\ell'\subseteq E'$ passing through any two points of $E'\cap C'$ satisfies $E\cdot\hat{\ell'}\ge 2$, which is absurd. 
So the $\varphi$-proper transform $\hat{E'}\cong\mathbb{F}_1$ of $E'$ is ruled over $h_W\cong\mathbb{P}^1$ with fibres $\hat{\ell'}$ and the negative section $C_0$ (being a fibre of $\varphi$). 
Suppose $P\in E'\cap C'$ is a singular point  of $C'$.  
Then   $E'\cdot C'=\deg \mathcal{O}_{X'}(E')|_{\widetilde{C'}}\ge 2$, with $\widetilde{C'}$  the normalization of $C'$, in which case,
any line $\ell'\subseteq E'$ containing $P$ satisfies  $E\cdot\hat{\ell'}=(E|_{\hat{E'}}\cdot\hat{\ell'})_{\hat{E'}}=(C'\cdot E')\ge 2$, a contradiction.
So  $E'$ meets $C'$ along a single smooth point of $C'$.

Since $\hat{E'}\cdot\hat{\ell'}=\varphi^*E'\cdot\hat{\ell'}=-1$ (noting that $E'|_{E'}=\mathcal{O}(-1)$) and  $-\hat{E'}|_{\hat{E'}}$ is  relative ample with respect to the ruling $\hat{E'}\to h_W$, we obtain a contraction $\psi_X:X\to W$   such that $\psi_X|_{\hat{E'}}$ coincides with this ruling and $\psi_X|_{X\backslash\hat{E'}}\cong W\backslash h_W$ (cf.~e.g. \cite[Proposition 7.4]{HP16}). 
Similar to $\psi_X$, the morphism $\psi_Y$ is induced by contracting the divisor $\hat{E}_{Y'}:=\tau_1^*\hat{E'}\cong\mathbb{F}_1$ to $h_Z\cong h_W\cong\mathbb{P}^1$. 
By the rigidity lemma, $\psi_X\circ\tau_1$ factors through $\psi_Y$ 
and we get $\tau_0$.

\textbf{Caution: it is still not clear whether $W$ is projective or not, and then $\psi_X$ here may not be an extremal contraction.
Therefore, we could not apply the induction on $W$ so far.} 

Since $X$ has only canonical singularities and $L_X+\nu\hat{E'}$ is $\psi_X$-trivial,  
by \cite[Theorem 4.12]{Nak87}, there exists a  divisor  $L_W$ on $W$ such that
$L_X=\psi_X^*(L_W)-\nu\hat{E'}$. 
Recall that $C_0$ (a fibre of $\varphi$) is the negative section of $\hat{E}'$. 
Since $-{\hat{E'}}|_{{\hat{E'}}}=C_0+\hat{\ell'}$ (noting that $\hat{E'}\cdot\hat{\ell'}=E'\cdot\ell'=-1$ and $\hat{E'}\cdot C_0=0$), we obtain the following strictly nef divisor on $\hat{E'}$
$$L_X|_{\hat{E}'}=\nu C_0+(L_W\cdot h_W+\nu)\hat{\ell'}.$$
Since  a strictly nef divisor on $\mathbb{F}_1$ is ample, our $L_X|_{\hat{E}'}$ is ample and hence  $L_W\cdot h_W>0$ (cf.~\cite[Chapter \Rmnum{5}, Proposition 2.20]{Har77}). 
Then $L_W$ is strictly nef on the Moishezon threefold $W$ by the projection formula. 
We consider the following commutative diagram:
\[\xymatrix{
Z_0\ar[d]_{\sigma_0}\ar@/_2pc/[dd]_{\pi_0}&Y_0\ar[l]_{\psi_{Y_0}}\ar[d]^{\sigma_1}\ar@/^2pc/[dd]^{\pi_1}\\
Z\ar[d]_{\tau_0}&Y\ar[d]^{\tau_1}\ar[l]_{\psi_Y}\\
W&X\ar[l]^{\psi_X}
}
\]
where $\sigma_1:Y_0\to Y$ is a resolution with $Y_0$ being smooth.
Since $X$ is smooth around $\hat{E'}$ (recalling that $E'\cap C'$  is a smooth point of $C'\subseteq X'$), our $Y$ is also smooth around $\hat{E}_{Y'}$.
So  $\sigma_1$ is isomorphic around $\hat{E}_{Y'}$.
In particular, $Y_0$ admits a contraction $\psi_{Y_0}$ mapping $E_{Y_0}:=\sigma_1^*(\hat{E}_{Y'})=\pi_1^*\hat{E'}\cong\mathbb{F}_1$ onto a curve $h_{Z_0}\cong\mathbb{P}^1$ (cf.~e.g. \cite[Proposition 7.4]{HP16}).
By the rigidity lemma, $\psi_Y\circ\sigma_1$ factors through $\psi_{Y_0}$, and we get the induced $\sigma_0$.
Since  $Y_0$ is smooth and the conormal sheaf of $E_{Y_0}$ is isomorphic to $-E_{Y_0}|_{E_{Y_0}}$ which is (locally over $h_{Z_0}$) isomorphic to $\mathcal{O}(1)$, 
it follows from \cite[Corollary 6.11]{Art70} that $Z_0$ is smooth.
\begin{claim}\label{claim_proj}
$Z_0$ is projective.	
\end{claim}

Suppose the claim for the time being. 
Since $-K_X$ is $\psi_X$-ample, we have $R^j(\psi_X)_*\mathcal{O}_X=0$ for all $j\ge 1$ (cf.~\cite[Theorem 1-2-5]{KMM87}) and  hence $W$ has only isolated rational singularities.
Since $Z_0\to W$ is a resolution and $Z_0$ is projective by the assumption, 
by \cite[Remark 3.5]{HP16}, $W$ admits a (smooth) K\"ahler form and hence $W$ is both K\"ahler and Moishezon.
So the projectivity of $W$ follows from \cite[Theorem 6]{Nam02}.
Now, $W$ being projective and $L_W$ being strictly nef, we get a contradiction by (Proof of) Lemma \ref{lem_L.C>0}.

\par \vskip 1pc \noindent
\textbf{Proof of Claim \ref{claim_proj} (End of Proof of Lemma \ref{lem_bir-intersectC}).}
Suppose the contrary. 
Denote by $\pi_i:=\tau_i\circ\sigma_i$.
By  \cite[Theorem 2.5]{Pet96}, there is an irreducible curve $b$ and a positive closed current $T$ on $Z_0$ such that $b+T\equiv0$ (as $(2,2)$-currents); hence $(\pi_0)_*(b+T)\equiv0$.
Since $L_W$ is strictly nef on $W$, our $b$ is $\pi_0$-exceptional.
Note that $X$ is smooth around $\hat{E'}$ and $\hat{E}_{Y'}\cap \textup{Exc}(\tau_1)=\emptyset$.
So $E_{Y_0}\cap\textup{Exc}(\pi_1)=\emptyset$.
Take a very ample divisor $H$ on $Y_0$.
Since $Z_0$ is $\mathbb{Q}$-factorial, it is easy to verify that $(\psi_{Y_0})_*H\cdot b>0$ (cf.~\cite[Lemma 2.62]{KM98}), noting that $\pi_0^*L_W\cdot h_{Z_0}>0$   and thus $b\neq h_{Z_0}:=\psi_{Y_0}(E_{Y_0})$.
So  $(\psi_{Y_0})_*H\cdot T<0$.
By \cite{Siu74}, our $T=\chi_{h_{Z_0}}T+\chi_{Z_0\backslash h_{Z_0}}T=\delta h_{Z_0}+\chi_{Z_0\backslash h_{Z_0}}T$, where $\chi_{h_{Z_0}}T$ and $\chi_{Z_0\backslash h_{Z_0}}T$ are positive closed currents. 
Since $(\psi_{Y_0})_*H\cdot \chi_{Z_0\backslash h_{Z_0}}T\ge 0$, we have $\delta>0$.
Then
$$0\equiv (\pi_0)_*(b+T)\equiv (\pi_0)_*T=\delta h_W+(\pi_0)_*(\chi_{Z_0\backslash h_{Z_0}}T).$$
 Since $L_W\cdot h_W>0$ and $L_W\cdot(\pi_0)_*(\chi_{Z_0\backslash h_{Z_0}}T)\ge 0$, the above equality is absurd.
\end{proof}

\begin{proof}[\textup{\textbf{End of Proof of Theorem \ref{thm_3fold_surface_curve}.}}]
We suppose the contrary that $K_X+tL_X$ is not ample for any $t\gg 1$. 
By Lemmas \ref{lem-sec-con-surface} and \ref{lem_bir-intersectC}, 
 $\varphi':X'\to X''$ is a divisorial contraction with the exceptional divisor $E'$  disjoint with $C'$.
Then the strict transform of $E'$ on $X$ is some $E_j$, $j\in I_0$ (cf.~Notation \ref{not_bir_surface_curve}); hence we can continue to consider $D_{X''}$.
By Proposition \ref{prop-q-effective} and the induction  on $X''$ (cf.~Remark \ref{remark_conclude_excep_tocurve}),  our $D_{X''}$ is  not strictly nef. 
So we get the third  contraction $X''\to X'''$ (cf.~Lemma \ref{lem_not_big_X'}). 
If $\dim X'''\le 2$, then we  argue as in Lemma \ref{lem-sec-con-surface}, together with Theorem \ref{thm_contr_surface_conic}, to conclude that $K_X+tL_X$ is ample, a contradiction to our assumption.   
So $X''\to X'''$ is still birational with the exceptional divisor $E''$. 
But then, Lemma \ref{lem_bir-intersectC} Case (1) shows that neither $\varphi'(C')$ (where $C'=\varphi(E)\subseteq X'$) nor $C''$ (where $C''=\varphi'(E')\subseteq X''$) is contained in $E''$.
Further,  Lemma \ref{lem_bir-intersectC} Case (2) gives that
 such $E''$ cannot intersect with  $\varphi'(C')\cup C''$. 
Hence,  $E$, $E'$ and $E''$ are pairwise disjoint. 
Since  $X$ is uniruled, after finitely many steps, we get some $X_M$ with $\dim X_M\le 2$. 
By Theorem \ref{thm_contr_surface_conic}, $K_X+tL_X$ is ample which contradicts our assumption (cf.~Lemma \ref{lem-sec-con-surface}).
\end{proof}

\begin{proof}[Proof of Theorems \ref{main_Goren_ter_3fold} and  \ref{main_theorem_Goren_ter_3fold} (3)]
It follows  from Remark \ref{remark_conclude_excep_tocurve} and 	Theorem \ref{thm_3fold_surface_curve}.
\end{proof}

\begin{remark}\label{rem_kappa=0}
In Theorem \ref{main_Goren_ter_3fold},	if $\kappa(X)=0$,  then things will become different.
In this case, one can start from  $K_X\equiv 0$ (cf.~Proposition \ref{prop-q-effective}) and then
 Question \ref{ques_serrano} predicts that strict nefness is equivalent to ampleness, which relates to the abundance conjecture.
 We refer to \cite{LS20} for a recent progress on the strictly nef divisors on Calabi-Yau threefolds.
\end{remark}

\section{Further discussions, Proof of Theorem  \ref{intro_main_prop}}\label{section_last}
In this section, we consider a projective klt pair $(X,\Delta)$ with strictly nef $-(K_X+\Delta)$. 
We first show that such $X$ has vanishing augmented irregularity (cf.~Theorem \ref{thm_num_trivial}), which  reduces Question \ref{gene_conj} (in any dimension) to $q^{\circ}(X)=0$. 
As an application, we shall prove Theorem \ref{intro_main_prop} 
which answers Question \ref{gene_conj} affirmatively for the threefold case.


\begin{theorem}\label{thm_num_trivial}
Let $(X,\Delta)$ be a projective klt pair with $-(K_X+\Delta)$ being strictly nef.
Then the augmented irregularity $q^\circ(X)=0$.	
\end{theorem}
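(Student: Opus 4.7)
The plan is to argue by contradiction: suppose $q^\circ(X)>0$ and produce a curve on which $-(K_X+\Delta)$ has zero intersection. By definition of the augmented irregularity, I may fix a finite quasi-\'etale cover $\pi\colon \widetilde{X}\to X$ with $q(\widetilde{X})>0$. Set $\widetilde{\Delta}:=\pi^*\Delta$; then $(\widetilde{X},\widetilde{\Delta})$ is klt (cf.~\cite[Proposition 5.20]{KM98}), $K_{\widetilde{X}}+\widetilde{\Delta}=\pi^*(K_X+\Delta)$ since $\pi$ is quasi-\'etale, and by the projection formula $-(K_{\widetilde{X}}+\widetilde{\Delta})$ is still strictly nef. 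It therefore suffices to derive a contradiction from the existence of a non-constant Albanese morphism $\alpha\colon \widetilde{X}\to A:=\Alb(\widetilde{X})$ with $\dim A\ge 1$.

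The crucial input is the structure theory for klt pairs with nef anti-log canonical divisor developed in \cite{Cao19,CH19,CCM19,Wan20,MW21}: for such a pair, $\alpha$ is surjective, equi-dimensional with rationally connected fibres, and is a locally constant (isotrivial) fibration. After a suitable finite \'etale base change $A'\to A$ (which yields a further finite quasi-\'etale cover of $X$, so that strict nefness of the anti-log canonical is preserved), the pulled-back fibration splits as
\[
\widetilde{X}':=\widetilde{X}\times_A A'\cong F\times A',
\]
where $F$ is a general fibre of $\alpha$, and the pulled-back boundary takes the product form $p_F^*\Delta_F$ for some $\Q$-divisor $\Delta_F$ on $F$. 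This splitting is the ``special section'' of the Albanese alluded to in the introduction; securing it in the klt (and possibly singular) setting is the main technical obstacle of the proof and the reason for citing the series of recent works above.

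Once the splitting is in hand, the contradiction is immediate: under the isomorphism, the pullback of $-(K_X+\Delta)$ equals $-p_F^*(K_F+\Delta_F)$, which must remain strictly nef on $\widetilde{X}'$. However, fixing any point $f\in F$ and any irreducible curve $C\subset A'$, the curve $\{f\}\times C\subset F\times A'$ satisfies
\[
-p_F^*(K_F+\Delta_F)\cdot(\{f\}\times C)=0,
\]
which is absurd. Should the full product splitting prove elusive in the exact generality needed, a fallback is to combine the canonical bundle formula with the locally constant structure: the moduli part on $A$ is then numerically trivial, so $K_{\widetilde{X}}+\widetilde{\Delta}\equiv \alpha^*N$ for some $\Q$-divisor $N$ on $A$, and any irreducible curve contained in a fibre of $\alpha$ gives $-(K_{\widetilde{X}}+\widetilde{\Delta})\cdot C=0$, again contradicting strict nefness. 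Either way one concludes $q^\circ(X)=0$.
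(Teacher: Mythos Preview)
Your overall plan --- reduce to a cover with positive irregularity, invoke the locally constant Albanese fibration, and produce a curve on which $-(K_X+\Delta)$ vanishes --- matches the paper. But both of your mechanisms for producing that curve fail.

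\textbf{The main argument.} A locally constant fibration over $A$ is governed by a monodromy representation $\rho\colon\pi_1(A)\to\Aut(F,\Delta_F)$; the fibration splits as $F\times A'$ after the finite \'etale base change $A'\to A$ precisely when $\rho$ has finite image. Nothing in the hypotheses forces this: even for $F=\mathbb{P}^1$ over an elliptic curve one can take $\rho$ to land on an infinite cyclic subgroup of $\mathrm{PGL}_2$. So the assertion ``after a suitable finite \'etale base change the fibration splits'' is exactly the hard step, and you have not justified it. The paper does \emph{not} attempt to split the fibration. Instead it passes to the universal cover $\widetilde{A}$, writes $\widetilde{X}\cong Y\times\widetilde{A}$ with $G=\pi_1(A)$ acting diagonally, and uses the fixed-point result \cite[Theorem~4.1]{LOY19} (together with an induction on $\dim X$ to ensure $q^\circ(F)=0$, so that \'etale covers of $A$ are harmless) to find a $G$-fixed point $y\in Y$. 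That fixed point yields a \emph{section} $\sigma\colon A\to X$, not a product decomposition. One then has to check $\sigma^*(K_X+\Delta)\equiv 0$; the paper does this by exhibiting $\pi_*\big(\mathcal{O}_X(t(K_X+\Delta))\otimes L^{\otimes k}\big)^{\otimes r}$ as a numerically flat bundle with a numerically trivial line-bundle quotient coming from evaluation at $y$. The induction and the fixed-point theorem are the substantive inputs you are missing.

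\textbf{The fallback.} The canonical bundle formula you invoke, $K_{\widetilde{X}}+\widetilde{\Delta}\equiv\alpha^*N$, is valid for lc-trivial fibrations, i.e.\ when $(K_{\widetilde{X}}+\widetilde{\Delta})|_F\sim_{\mathbb{Q}}0$. Here the opposite holds: $-(K_{\widetilde{X}}+\widetilde{\Delta})|_F=-(K_F+\Delta_F)$ is strictly nef, hence nonzero. Consequently for any curve $C$ contained in a fibre one has $-(K_{\widetilde{X}}+\widetilde{\Delta})\cdot C=-(K_F+\Delta_F)\cdot C>0$, not $0$; no contradiction arises. The curve that works must be \emph{horizontal}, which is precisely why one needs the section $\sigma$.
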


The following theorem, which was first proved in \cite[Theorem A]{Wan20} for the Albanese map and then strengthened recently (cf.~\cite[Theorem 4.7]{MW21}), will be crucially used in the proof of Theorem \ref{thm_num_trivial}.
We refer readers to \cite[Definition 2.6]{MW21} for the definitions of locally constant fibration with respect to pairs (cf.~\cite[Definition 1.6]{Wan20}).
\begin{theorem}(cf.~\cite[Theorem A]{Wan20} and \cite[Theorem 4.7]{MW21})\label{thm_wan_m21}
Let $(X,\Delta)$ be a projective klt pair such that $-(K_X+\Delta)$ is nef.
Then the Albanese map $\pi:X\dashrightarrow A$ is an everywhere defined locally constant fibration with respect to $(X,\Delta)$.
\end{theorem}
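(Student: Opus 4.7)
The plan is to prove the theorem in three stages: surjectivity of $\pi$ onto $A$, everywhere-definedness, and the locally constant structure with respect to $(X,\Delta)$. First I would verify surjectivity by invoking a klt-pair generalization of Zhang's theorem (cf.~\cite{Zha06}): the nefness of $-(K_X+\Delta)$ forces the image of $\alb_X$ to be an abelian subvariety of $A=\Alb(X)$, which by the universal property of the Albanese must equal $A$.

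For everywhere-definedness, pass to a log resolution $\mu:\widetilde X\to X$ with $K_{\widetilde X}+\widetilde\Delta=\mu^*(K_X+\Delta)+E$ where $E$ is effective and $\mu$-exceptional and $(\widetilde X,\widetilde\Delta)$ is log smooth. The Albanese $\widetilde\pi:\widetilde X\to A$ is already a morphism. The key input is the weak positivity of $\widetilde\pi_*\omega_{\widetilde X/A}^{\otimes m}(\lfloor m\widetilde\Delta\rfloor)$ (Viehweg, Campana--P\u{a}un, Fujino--Gongyo). Combined with the nefness of $-(K_X+\Delta)$ and the triviality of $K_A$, a pseudo-effectivity comparison shows that no $\mu$-exceptional divisor lies strictly over the indeterminacy locus of the rational map $\pi$, so $\pi$ descends to a morphism on all of $X$.

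For the locally constant structure, I would follow the Cao--H\"oring--Wang strategy. Step (a): show that the direct image sheaves $\pi_*\omega_{X/A}^{[m]}(m\Delta)$ are numerically flat (hence hermitian flat) on $A$, by combining weak positivity, which gives nefness of these sheaves, with an upper pseudo-effectivity bound coming from $-(K_X+\Delta)$ nef and $K_A\equiv 0$. Step (b): invoke a Simpson-type theorem, with recent extensions by Deng, P\u{a}un and Wang, to identify such numerically flat sheaves on $A$ with representations of $\pi_1(A)$ factoring through a compact unitary group. Step (c): pass to a finite \'etale cover $A'\to A$ trivializing the monodromy, and use a rigidity/splitting argument (the Cao--H\"oring ``isotrivial implies trivial over abelian base'' lemma) to see that the base-changed family $(X_{A'},\Delta_{A'})\to A'$ is trivial. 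Descending via the Galois group of $A'/A$ yields a locally constant fibration with structure group contained in $\Aut(F,\Delta_F)$ for the fibre $(F,\Delta_F)$.

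The main obstacles are steps (a) and (c). Establishing hermitian flatness of direct images in the klt pair setting requires not only the recent positivity theorems for log pluricanonical pushforwards, but also a mechanism to rule out any strictly positive subsheaf---typically via an Ohsawa--Takegoshi extension argument combined with the $L^2$ description of these direct images, adapted to the boundary $\Delta$. The descent in (c) is delicate because the local constancy must respect $\Delta$ pointwise: $\Delta$ need not be pulled back from $A$, and controlling how it varies within each trivialization after \'etale base change is the most technically subtle point, precisely the reason that the pair-version needed the refinements of \cite{MW21} beyond \cite{Wan20}.
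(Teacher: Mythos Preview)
The paper does not give a proof of this theorem at all: it is stated with the citation ``(cf.~\cite[Theorem A]{Wan20} and \cite[Theorem 4.7]{MW21})'' and then immediately used as a black box in the proofs of Lemma~\ref{lem_base_change_simply_connected} and Theorem~\ref{thm_num_trivial}. So there is no proof in the paper against which to compare your proposal.

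That said, your outline is a reasonable summary of the strategy actually carried out in the cited references. The three-stage breakdown (surjectivity, everywhere-definedness, locally constant structure) matches the architecture of \cite{Wan20}, and your identification of the key inputs---weak positivity of twisted pluricanonical pushforwards, numerical flatness of the direct images on the abelian base, and the Simpson-type correspondence to extract a representation of $\pi_1(A)$---is accurate. You are also right that the passage from \cite{Wan20} to \cite{MW21} is precisely about upgrading ``locally constant for $X$'' to ``locally constant with respect to the pair $(X,\Delta)$'', i.e., controlling how $\Delta$ sits in the fibration. One small correction: in step~(c) you should not expect to trivialize the monodromy by a \emph{finite} \'etale cover in general; the locally constant structure is governed by a representation of $\pi_1(A)$ which need not have finite image, and the argument in \cite{Wan20} and \cite{MW21} works directly with the universal cover $\widetilde{A}\to A$ rather than a finite one. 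But since the paper under review treats this theorem purely as a citation, none of this affects your reading of the present paper.
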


The following lemma gives a sufficient condition for the Albanese map being stable under \'etale base change.
\begin{lemma}\label{lem_base_change_simply_connected}
Let $(X,\Delta)$ be a  projective klt  pair such that the anti-log canonical divisor $-(K_X+\Delta)$ is nef.
Let $\pi:X\to A$ be the Albanese map, and $A'\to A$ any \'etale cover.
Suppose that the general fibre $F$ of $\pi$ has $q(F)=0$.
Then the base change $X':=X\times_AA'\to A'$ is still the Albanese map.
\end{lemma}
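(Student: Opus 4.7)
The plan is to apply Theorem \ref{thm_wan_m21} simultaneously to the original pair and to the base change. Since $A'\to A$ is \'etale, the pullback $\Delta'$ of $\Delta$ to $X'$ is well-defined, the pair $(X',\Delta')$ is again klt, and $-(K_{X'}+\Delta')$ is the pullback of the nef divisor $-(K_X+\Delta)$, hence nef. By Theorem \ref{thm_wan_m21} both Albanese maps $\pi\colon X\to A$ and $\mathrm{alb}\colon X'\to\mathrm{Alb}(X')$ are everywhere-defined locally constant fibrations; in particular each of them is surjective with connected fibres. By the universal property of the Albanese, the given morphism $\pi'\colon X'\to A'$ (the base change of $\pi$) factors as
$$X'\xrightarrow{\mathrm{alb}}\mathrm{Alb}(X')\xrightarrow{\psi} A',$$
and it suffices to prove that the homomorphism (up to translation) $\psi$ is an isomorphism.

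The first step will be to identify fibres. For a point $a'\in A'$ with image $a\in A$, the base-change description $X'=X\times_A A'$ gives a canonical identification $F':=(\pi')^{-1}(a')\cong \pi^{-1}(a)=:F$, so in particular $q(F')=q(F)=0$ by hypothesis. Let $K\subseteq\mathrm{Alb}(X')$ be the identity component of $\ker\psi$ and put $k:=\dim K$; the fibre $K_{a'}$ of $\psi$ over $a'$ is a translate of $K$. Since $\pi'=\psi\circ\mathrm{alb}$, we have $F'=\mathrm{alb}^{-1}(K_{a'})$, so restricting $\mathrm{alb}$ yields a morphism $\mathrm{alb}|_{F'}\colon F'\to K_{a'}$ from a klt variety with vanishing augmented irregularity (in fact with $q=0$) to an abelian variety. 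Such a morphism is forced to be constant by the universal property of the Albanese of $F'$. Hence $K_{a'}$ is a single point, $k=0$, and $\psi$ is an isogeny.

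To finish, I will use connectedness to upgrade the isogeny to an isomorphism. The fibre $F'$ of $\pi'\colon X'\to A'$ is connected, since $X'\to A'$ is the base change of the fibration $X\to A$, whose fibres are connected by Theorem \ref{thm_wan_m21}. On the other hand, if $\deg\psi=d$, then $K_{a'}$ consists of $d$ points, and $F'=\mathrm{alb}^{-1}(K_{a'})$ decomposes as the disjoint union of the $d$ (connected) $\mathrm{alb}$-fibres lying above them. Connectedness of $F'$ forces $d=1$, so $\psi$ is an isomorphism and $X'\to A'$ is the Albanese map of $X'$, as required.

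The only delicate point will be the invocation that a morphism from a klt variety $F'$ with $q(F')=0$ to an abelian variety must be constant; this is standard once one knows that the Albanese of a normal projective variety with $h^1(\mathcal{O})=0$ is trivial, and it is what forces the kernel $K$ to collapse. Everything else is a straightforward combination of Theorem \ref{thm_wan_m21} with the elementary observation that \'etale base change does not alter fibres.
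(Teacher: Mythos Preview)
Your argument is correct and follows essentially the same route as the paper's proof: factor $\pi'$ through $\mathrm{alb}_{X'}$ via the universal property, use $q(F')=0$ to force the induced map $\psi\colon\mathrm{Alb}(X')\to A'$ to be an isogeny, and then use connectedness of the fibres of $\pi'$ to force $\deg\psi=1$. One small imprecision: the fibre $K_{a'}$ of $\psi$ is a priori a coset of $\ker\psi$, not of its identity component $K$; but this is harmless, since your constancy argument really uses the composite $F'\to K_{a'}\hookrightarrow\mathrm{Alb}(X')$, and any morphism from the normal variety $F'$ with $q(F')=0$ to an abelian variety is constant.
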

\begin{proof}
We consider the following commutative diagram
\[\xymatrix{&\textup{Alb}(X')\ar[d]^\sigma\\
X'\ar[r]^{\pi'}\ar[ur]^{\textup{alb}_{X'}}\ar[d]&A'\ar[d]\\
X\ar[r]^\pi&A
}
\]
where $\sigma$ is induced from the universality of $\textup{alb}_{X'}$. 
Note that both $\pi$ and $\textup{alb}_{X'}$ are locally constant  surjective morphisms (cf.~Theorem \ref{thm_wan_m21}).
So we only need to show that $\sigma$ is isomorphic.
Since $\pi'$ is surjective, so is $\sigma$.
If $q(X')>q(X)$, then taking a general fibre of $\sigma$, it is dominated by a fibre $F$ of $\pi'$ with $q(F)=0$, which is absurd. 
So $\sigma$ is generically finite.
Since $\pi'$ has connected fibres, our $\sigma$ is birational and hence an isomorphism.
\end{proof}

Let us show Theorem \ref{thm_num_trivial}. 
Our main tool is to find a special section of the Albanese map by extending \cite[Theorem 1.3]{LOY19} to the singular setting.  
For the organization,  we  shall only highlight the differences 
and skip the same arguments during the proof.
We strongly recommend readers to \cite[Theorem 1.3 and Section 4]{LOY19} for the complete  arguments on the smooth case. 
\begin{proof}[Proof of Theorem \ref{thm_num_trivial}]
We shall use the induction on the dimension of $X$ to prove our theorem.	
If $\dim X=1$, then $X\cong\mathbb{P}^1$, in which case our theorem is trivial.
Suppose that our theorem holds for $\dim X\le k$ with $k\ge 2$.
In the following, we may assume that $\dim X\ge k+1$.
Let us suppose the contrary that $q^\circ(X)>0$.
Then with $X$ replaced by its quasi-\'etale cover, we may assume that $q(X)=q^\circ(X)>0$ (cf.~\cite[Proposition 5.20]{KM98}).
Denote by $\pi:X\to A:=\textup{Alb}(X)$ its Albanese map.
Since $-(K_X+\Delta)$ is nef, our $\pi$ is a locally constant surjective morphism (cf.~Theorem \ref{thm_wan_m21}). 
Besides, the strict nefness of $-(K_X+\Delta)$ implies that $X$ is uniruled and hence $\dim A<\dim X$ (cf.~Proposition \ref{prop_strict_uniruled}).
Denote by $F$ a fibre of $\pi$.
By our assumption and the adjunction, we have  $0<\dim F<\dim X$, $(F,\Delta|_F)$ is klt, and $-(K_F+\Delta|_F)$ is also strictly nef.
Applying the induction, we see that $q^\circ(F)=0$.
By Lemma \ref{lem_base_change_simply_connected}, we are free to replace $A$ by its \'etale cover. 
In the following, we shall construct a section $\sigma:A\to X$ (up to an \'etale cover) of $\pi$ such that $\sigma^*(K_X+\Delta)\equiv 0$, which contradicts the strict nefness of $-(K_X+\Delta)$.

By \cite[Section 3.4]{Wan20}, there is a $\pi$-very ample divisor $L$ on $X$ such that $E_m:=\pi_*\mathcal{O}_X(mL)$ is numerically flat for each $m\ge 0$.
Let $\widetilde{A}\to A$ be the universal cover with the Galois group $G:=\textup{Gal}(\widetilde{A}/A)$, and $\widetilde{X}:=X\times_A\widetilde{A}$ with the induced projection $p:\widetilde{X}\to X$.
By Theorem \ref{thm_wan_m21}, 
$\widetilde{X}\cong Y\times\widetilde{A}$ for some projective variety $Y$ (isomorphic to a fibre $F$ of $\pi$) which is stable under $G$, and $X\cong\widetilde{X}/G$ via the diagonal action.
Let $\textup{pr}_1:\widetilde{X}\to Y$ be the natural projection.
Further, since $\pi$ is locally constant with respect to the pair $(X,\Delta)$, there exists a $G$-stable Weil $\mathbb{Q}$-divisor $\Delta_Y$ on $Y$ such that $\Delta\cong \textup{pr}_1^*\Delta_Y/G=(\Delta_Y\times \widetilde{A})/G$  (cf.~\cite[Definition 2.6]{MW21}). 
In other words, $\textup{pr}_1^*\Delta_Y=\Delta_Y\times \widetilde{A}=p^*\Delta$ (as Weil divisors), noting that the pull-back is well-defined since both $p$ and $\textup{pr}_1$ are equi-dimensional (cf.~\cite[Construction 2.13]{CKT16}). 

With the same arguments as in \cite[Proof of Theorem 1.3]{LOY19}, there is a $G$-equivariant ample line bundle $H$ on $Y$ such that $p^*L=\textup{pr}_1^*H$. 
Moreover, the action  $G$ on $Y$ has a fixed point $y\in Y$ with $G$ replaced by a finite index subgroup (cf.~\cite[Theorem 4.1]{LOY19} and Lemma \ref{lem_base_change_simply_connected}). 
So it follows from \cite[Lemma 4.5]{LOY19} that $y$ induces a section $\sigma:A\to X$ of the Albanese morphism $\pi$, and there is a short exact sequence of flat vector bundles on $A$
$$0\to I\to E_1\to Q\to 0$$
such that $Q\cong\sigma^*L$.
Since $Q$ is a numerically flat line bundle, we have $\sigma^*L\equiv 0$.

\textbf{Now we  construct another numerically flat vector bundle.}
Since $(X,\Delta)$ is  klt  and $\pi$ is locally constant with respect to the pair $(X,\Delta)$ (cf.~Theorem \ref{thm_wan_m21}), for every fibre $F$, $(F,\Delta|_F)$ is also klt and hence $F$ is Cohen-Macaulay (cf.  \cite[Theorems 5.10 and 5.22]{KM98}).
So $\pi$ is a  Cohen-Macaulay morphism and thus the relative dualizing sheaf $\omega_{X/A}$ is compatible with any base change, i.e.,
$\omega_{\widetilde{X}/\widetilde{A}}=p^*\omega_{X/A}$. 
Since $\widetilde{X}\cong Y\times \widetilde{A}$, we see that $\omega_{\widetilde{X}/\widetilde{A}}=\omega_{\widetilde{X}}=\textup{pr}_1^*\omega_Y$. 
Let $t$  be a natural number such that both $t\Delta$ and $t\Delta_Y$ are integral Weil divisors, and both $K_X+\Delta$ and $K_Y+\Delta_Y$ are Cartier divisors. 
Since $K_Y+\Delta_Y$ is $G$-equivariant, 
for any $g\in G$, we have
$g^*(tK_Y+t\Delta_Y)=tK_Y+t\Delta_Y$.
Since $H$ is ample and also $G$-equivariant on $Y$, for  $k\gg 1$, our $\mathcal{O}_Y(tK_Y+t\Delta_Y)\otimes H^{\otimes k}$ is a $G$-equivariant ample line bundle. 
Now we have the following equality
\begin{align*}\label{equ_cohen_double}
p^*\mathcal{O}_X(tK_X+t\Delta)&=\mathcal{O}_{\widetilde{X}}(tp^*K_X+tp^*\Delta)=\mathcal{O}_{\widetilde{X}}(tK_{\widetilde{X}}+t(\Delta_Y\times\widetilde{A}))\\
&=\mathcal{O}_{\widetilde{X}}(\textup{pr}_1^*(tK_Y+t\Delta_Y))=\textup{pr}_1^*\mathcal{O}_Y(tK_Y+t\Delta_Y)
\end{align*}
Let $r$ be a sufficiently large positive integer such that $(\mathcal{O}_Y(tK_Y+t\Delta_Y)\otimes H^{\otimes k})^{\otimes r}$ is ($G$-equivariantly) very ample.
Then  the natural linear action $G$ on $H^0(Y,(\mathcal{O}_Y(tK_Y+t\Delta_Y)\otimes H^{\otimes k})^{\otimes r})$ induces a flat vector bundle structure on $E':=\pi_*(\mathcal{O}_X(tK_X+t\Delta)\otimes L^{\otimes k})^{\otimes r}$. 
By \cite[Lemma 4.5 and the paragraph before it]{LOY19}, the $G$-fixed point $y\in Y$  induces a short exact sequence of flat vector bundles:
$$0\to I'\to E'\to Q'\to 0$$
such that $0\equiv Q'\cong \sigma^*(\mathcal{O}_X(tK_X+t\Delta)\otimes L^{\otimes k})^{\otimes r}$, noting that $Q'$ is a line bundle.

Since $\sigma^*L\equiv 0$ as shown above, we see that $\sigma^*(K_X+\Delta)\equiv 0$, which contradicts the strict nefness of $K_X+\Delta$.
\end{proof}

With all the preparations settled, we are now ready to show Theorem  \ref{intro_main_prop}.

\begin{proof}[\textup{\textbf{Proof of Theorem \ref{intro_main_prop}}}]
Since $(X,\Delta)$ is klt and $-(K_X+\Delta)$ is nef, it follows from \cite[Corollary 1.2]{MW21} that there exists a finite quasi-\'etale cover $\nu:(X',\Delta':=\nu^*\Delta)\to (X,\Delta)$ such that $X'$ admits a locally constant MRC fibration $\phi:X'\to Y$ with $K_Y\equiv 0$ and  $Y\cong A\times\prod Y_i\times\prod Z_j$. 
Here, $A$ is an abelian variety, $Y_i$ are strict Calabi-Yau varieties, and $Z_j$ are singular holomorphic symplectic varieties. 
Since $-(K_{X'}+\Delta')$ is also strictly nef, it follows from 
Theorem \ref{thm_num_trivial} that  $q(X')=q(Y)=0$. 
Hence, the abelian variety part $A$ in the  decomposition of $Y$ does not appear.

Suppose the contrary that $X$ is not rationally connected.  
Then  $\dim Y=1$ or $2$ (cf.~Proposition \ref{prop_strict_uniruled}).
If $\dim Y=1$, then $q(Y)=0$ forces $Y$ to be  rational, a contradiction to the MRC fibration.
So $\dim Y=2$ and thus $Y$ is simply connected (cf.~\cite[Corollary 13.3]{GGK19}). 
Since $\phi$ is locally constant, 
we have $X'=C\times Y\cong\mathbb{P}^1\times Y$ with the induced  projection $\pi_1:X'\to C$. 
Then $-(K_{X'}+\Delta')=-\pi_1^*K_C-\Delta'$ is strictly nef. 
Taking  any curve $\ell$ contracted by $\pi_1$, we have $-\Delta'\cdot \ell>0$, and then
 $\textup{Supp}\,\Delta'$ dominates $C$. 
 By taking a $\pi_1$-contracted (general) curve which is not contained in $\textup{Supp}\,\Delta'$, we get a contradiction.
Our theorem is proved. 
\end{proof}

We end up this section (and also the paper) with the following remark.
\begin{remark}
In view of Theorem \ref{thm_num_trivial}, \cite[Corollary 1.2]{MW21} and the above proof, to answer Question \ref{gene_conj} in higher dimension, we are only left to show the finiteness of the fundamental group of such $Y$ (in the above proof) with  vanishing augmented irregularity $q^\circ(Y)$ and numerically trivial canonical divisor $K_Y$ (cf.~\cite[Conjecture 1.4]{MW21}).
\end{remark}


\begin{thebibliography}{99}
%
%
%
%

\bibitem[Amb05]{Amb05}
F. Ambro, The moduli {$b$}-divisor of an lc-trivial fibration. Compos. Math. \textbf{141} (2005), no. 2, 385--403.



\bibitem[Art70]{Art70}
M. Artin, Algebraization of formal moduli. II, Existence if modifications, Ann. of Math. (2) \textbf{91} (1970), 88--135.








\bibitem[BDPP13]{BDPP13}
S. Boucksom, J.-P. Demailly, M. P\u{a}un and T. Peternell, The pseudo-effective cone of a compact {K}\"{a}hler manifold and
              varieties of negative {K}odaira dimension, 
J. Algebraic Geom. \textbf{22} (2013), no. 2, 201--248.




\bibitem[Cao19]{Cao19}
J. Cao, Albanese maps of projective manifolds with nef anticanonical bundles, Ann. Sci. \'{E}c. Norm. Sup\'{e}r. (4) \textbf{52} (2019), no. 5, 1137--1154. 

\bibitem[Cha20]{Cha20}
P. Chaudhuri, Strictly nef divisors and some remarks on a conjecture of Serrano, arXiv:\textbf{2008.05009}

\bibitem[Cut88]{Cut88}
S. Cutkosky, Elementary contractions of Gorenstein threefolds. 
Math. Ann. \textbf{280} (1988), no. 3, 521--525. 



\bibitem[CCM19]{CCM19}
F. Campana, J. Cao and S.-i. Matsumura, Projective klt pairs with nef anti-canonical divisor, arXiv:\textbf{1910.06471}


\bibitem[CCP08]{CCP08}
F. Campana, J. A. Chen and T. Peternell, Strictly nef divisors, Math. Ann. \textbf{342} (2008), no. 3, 565--585. 



\bibitem[CH19]{CH19}
J. Cao and A. H\"oring, A decomposition theorem for projective manifolds with nef anticanonical bundle, J. Algebraic Geom. \textbf{28} (2019), no. 3, 567--597.

\bibitem[CKT16]{CKT16}
B. Claudon, S. Kebekus and B. Taji, Generic positivity and applications to hyperbolicity of moduli spaces, arXiv:\textbf{1610.09832}




\bibitem[CP91]{CP91}
F. Campana and T. Peternell, Projective manifolds whose tangent bundles are numerically effective, Math. Ann. \textbf{289} (1991), no. 1, 169--187.



\bibitem[Del14]{Del14}
G. Della Noce, On the Picard number of singular Fano varieties, Int. Math. Res. Not. 2014, no. 4, 955--990.




\bibitem[FKL16]{FKL16}
M. Fulger, J. Kollár, B. Lehmann, Volume and Hilbert function of $\mathbb{R}$-divisors, Mich. Math. J. \textbf{65} (2016), no. 2, 
371--387.


\bibitem[GGK19]{GGK19}
D. Greb, H.  Guenancia, and S.  Kebekus, 
Klt varieties with trivial canonical class: holonomy,  differential forms, and fundamental groups. Geom. Topol. \textbf{23} (2019), no. 4, 2051--2124. 





\bibitem[Hac04]{Hac04}
C. Hacon, A derived category approach to generic vanishing, J. Reine Angew. Math. \textbf{575} (2004), 173--187.


\bibitem[Har77]{Har77}
R. Hartshorne, Algebraic geometry.
Graduate Texts in Mathematics, No. 52. Springer-Verlag, New York-Heidelberg, 1977.




\bibitem[HL20]{HL20}
J. Han and W. Liu, On numerical nonvanishing for generalized log canonical pairs,  
Doc. Math. \textbf{25} (2020), 93--123.


\bibitem[HP16]{HP16}
A. H\"oring and T. Peternell, Minimal models for K\"ahler threefolds, Invent. Math. \textbf{203} (2016), no. 1, 217--264.



\bibitem[Kol96]{Kol96}
J. Kollar, Rational curves on algebraic varieties, Ergebnisse der Mathematik und ihrer Grenzgebiete. 3. Folge. A Series of Modern Surveys in Mathematics, \textbf{32}, 1996. 



\bibitem[KMM87]{KMM87}
Y. Kawamata, K. Matsuda, and K. Matsuki, Introduction to the minimal model problem, 
Adv. Stud. Pure Math. \textbf{10},  1987.

\bibitem[KM98]{KM98}
J.~Koll\'ar and S.~Mori,
Birational geometry of algebraic varieties,
Cambridge Univ. Press, 1998.


\bibitem[Lip69]{Lip69}
J. Lipman, Rational singularities, with applications to algebraic surfaces and unique factorization, Inst. Hautes \'{E}tudes Sci. Publ. Math.  \textbf{36} (1969), 195--279.




\bibitem[LOY19]{LOY19}
D. Li, W. Ou and X. Yang, On projective varieties with strictly nef tangent bundles, 
J. Math. Pures Appl. (9) \textbf{128} (2019), 140--151. 



\bibitem[LP20A]{LP20A}
V. Lazi\'c, and T. Peternell, 
On generalised abundance, I,  
Publ. Res. Inst. Math. Sci. \textbf{56} (2020), no. 2, 353--389.

\bibitem[LP20B]{LP20B}
V. Lazi\'c, and T. Peternell, 
On generalised abundance, II,  
Peking Math. J. \textbf{3} (2020), no. 1, 1--46. 


\bibitem[LS20]{LS20}
H. Liu and R. Svaldi, Rational curves and strictly nef divisors on Calabi--Yau threefolds, arXiv:\textbf{2010.12233}


\bibitem[Mae93]{Mae93}
H. Maeda, A criterion for a smooth surface to be Del Pezzo, Math. Proc. Cambridge Phil. Soc. \textbf{113} (1993), 1--3.


\bibitem[Mat89]{Mat89}
H. Matsumura,  Commutative ring theory, Translated from the Japanese by M. Reid. Second edition. Cambridge Studies in Advanced Mathematics, Cambridge, 1989. 




\bibitem[Miy81]{Miy81}
M. Miyanishi, Algebraic methods in the theory of algebraic threefolds---surrounding the works of Iskovskikh, Mori and Sarkisov. Algebraic varieties and analytic varieties (Tokyo, 1981), 69--99,
Adv. Stud. Pure Math. \textbf{1}, North-Holland, Amsterdam, 1983.



\bibitem[Muk81]{Muk81}
S. Mukai, Duality between $D(X)$ and $D(\hat{X})$ with application to Picard sheaves, Nagoya Math. J. \textbf{81} (1981), 153--175.


\bibitem[MW21]{MW21}
S. Matsumura and J. Wang, 
Structure theorem for projective klt pairs with nef anti-canonical divisor, arXiv:\textbf{2105.14308}


\bibitem[MZ18]{MZ18}
S. Meng and D.-Q. Zhang, Building blocks of polarized endomorphisms of normal projective varieties, Adv. Math. \textbf{325} (2018), 243--273.








\bibitem[Nak87]{Nak87}
N. Nakayama,  The lower semicontinuity of the plurigenera of complex varieties. Algebraic geometry, Sendai, 1985, 551--590, 
Adv. Stud. Pure Math., 10, North-Holland, Amsterdam, 1987.

\bibitem[Nam02]{Nam02}
Y. Namikawa, Projectivity criterion of {M}oishezon spaces and density of projective symplectic varieties, Internat. J. Math.  \textbf{13} (2002), no. 2, 125--135.

\bibitem[Noc14]{Noc14}
G. Della Noce, On the Picard number of singular Fano varieties, Int. Math. Res. Not. 995--990, 2014.



\bibitem[NZ10]{NZ10}
N. Nakayama and D.-Q. Zhang,
  Polarized endomorphisms of complex normal varieties,
Math. Ann. \textbf{346} (2010), no. 4, 991--1018;
arXiv:\textbf{0908.1688v1}




\bibitem[Pet96]{Pet96}
T. Peternell, Algebraicity criteria for compact complex manifolds, Math. Ann. \textbf{275} (1986), 653--672.










\bibitem[Rom19]{Rom19}
E. A. Romano, Non-elementary Fano conic bundles, Collect. Math. \textbf{70} (2019), no. 1, 33--50.



\bibitem[Sak84]{Sak84}
F. Sakai, Weil divisors on normal surfaces, Duke Math. J. \textbf{51} (1984), no. 4, 877--887.

\bibitem[Sar82]{Sar82}
V. Sarkisov, On conic bundle structures, Izv. Akad. Nauk SSSR Ser. Mat. \textbf{46} (1982), no. 2, 371--408, 432.

\bibitem[Ser95]{Ser95}
F. Serrano, Strictly nef divisors and Fano threefolds, 
J. Reine Angew. Math. \textbf{464} (1995), 187--206.



\bibitem[Siu74]{Siu74}
Y. T. Siu, Analyticity of sets associated to Lelong numbers and the extension of positive closed currents, Invent. Math. \textbf{27} (1974), 53--156.


\bibitem[Ueh00]{Ueh00}
H. Uehara, On the canonical threefolds with strictly nef anticanonical divisors, 
J. Reine Angew. Math. \textbf{522} (2000), 81--91.




\bibitem[Wan20]{Wan20}
J. Wang, Structure of projective varieties with nef anticanonical divisor: the case of log terminal singularities, arXiv:\textbf{2005.05782}







\bibitem[Zha06]{Zha06}
Q. Zhang, Rational Connectedness of log $Q$-Fano Varieties, J. Reine Angew. Math. \textbf{590} (2006), 131--142.




\bibitem[Zha16]{Zha16}
D.-Q.~Zhang, $n$-dimensional projective varieties with the action of an abelian group of rank $n - 1$,
Trans. Amer. Math. Soc. \textbf{368} (2016), no. 12, 8849--8872.



 


\end{thebibliography}
\end{document}